\documentclass[12pt,a4paper]{article}
\usepackage{latexsym,amssymb,amsfonts,amsmath,amsthm,nccmath,float,enumitem,setspace,bm,authblk,graphicx,arydshln}
\usepackage[usenames,dvipsnames]{xcolor}
\usepackage[hidelinks]{hyperref}
\usepackage[margin=2cm]{geometry}
\usepackage{booktabs}

\hypersetup{
    colorlinks,
    linkcolor={blue!80!black},
    citecolor={blue!80!black},
    urlcolor={blue!80!black}
}

\newtheorem{Theorem}{Theorem}[section]
\newtheorem{Proposition}{Proposition}[section]

\newtheorem{Lemma}{Lemma}[section]

\def \leq {\leqslant}
\def \geq {\geqslant}

\let\oldproofname=\proofname
\renewcommand{\proofname}{\rm\bf{\oldproofname}}

\numberwithin{equation}{section}

\allowdisplaybreaks

\begin{document}

\title{Fractional clique decompositions of dense balanced multipartite graphs}

\author[a,b]{Tao Feng \thanks{Supported by NSFC under Grant 12271023}}
\author[a]{Hengrui Liu}
\author[a]{Shikang Yu}
\affil[a]{School of Mathematics and Statistics, Beijing Jiaotong University, Beijing, 100044, P.R. China}
\affil[b]{Hebei Provincial Key Laboratory of Mathematical Theory and Analysis for Network and Data Science, Beijing Jiaotong University, Beijing, 100044, P.R. China}
\renewcommand*{\Affilfont}{\small\it}
\renewcommand\Authands{ and }

\affil[ ]{tfeng@bjtu.edu.cn, henryleo@bjtu.edu.cn, healthyu@bjtu.edu.cn}
\date{}

\maketitle
\underline{}
\begin{abstract}
This paper concerns fractional $K_s$-decompositions of multipartite graphs. For integers $r\ge s\ge 3$, we consider balanced $r$-partite graphs $G$ on $rn$ vertices. We establish necessary conditions for $G$ to admit a fractional $K_s$-decomposition, extending the notion of $s$-admissibility from the case $r=s$ to $r>s$. Using an association scheme on the edge set of a complete $r$-partite graph, we prove that if $r\ge s+2$ and the partite minimum degree of $G$ is at least $(1-c)n$ with $c\le 1/((s-2)(s+1)(s-1)^4)$, then $G$ has a fractional $K_s$-decomposition. For $r=s+1$, we show that under the condition $c\le 1/(3s^3(s-2)^2)$, every $s$-admissible balanced $(s+1)$-partite graph with partite minimum degree at least $(1-c)n$ admits a fractional $K_s$-decomposition. These results provide new degree thresholds for fractional $K_s$-decompositions of multipartite graphs with more than $s$ parts.
\end{abstract}

\noindent {\bf Keywords}: fractional decomposition; multipartite graph; partite minimum degree; association scheme

\section{Introduction}\label{sec:introduction}

All graphs considered here are finite, undirected and simple unless otherwise specified. Let $H$ be a graph. Denote by $V(H)$ the vertex set of $H$ and by $E(H)$ the edge set of $H$. For $v \in V(H)$, let $d(v)$ denote the degree of $v$ and $N(v)$ denote the set of all neighbours of $v$. For $v \in V(H)$ and $U \subseteq V(H)$, let $d(v, U)$ denote the number of neighbors of $v$ in $U$. For $U_1,U_2 \subset V(H)$ and $U_1\cap U_2=\emptyset$, denote by $E(U_1, U_2)$ the set of edges in $H$ with one vertex in $U_1$ and the other vertex in $U_2$. Let $\gcd(H)$ denote the largest integer that divides the degree of each vertex of $H$. For a positive integer $k$, let $K_k$ denote the complete graph on $k$ vertices.

Let $F$ be a subgraph of a graph $G$. The subgraphs of $G$ isomorphic to $F$ are referred to as \emph{copies} of $F$. A classical question in graph theory is whether the edges of $G$ can be partitioned into edge disjoint copies of $F$; such a partition of $E(G)$ is called an \emph{$F$-decomposition} of $G$. $G$ is said to be \emph{$F$-divisible} if $|E(F)|$ divides $|E(G)|$ and $\gcd(F)$ divides $\gcd(G)$. Kirkman \cite{Kirkman} showed that every $K_3$-divisible complete graph admits a $K_3$-decomposition. Wilson \cite{Wilson75} generalized this by showing that for any graph $F$, every sufficiently large $F$-divisible complete graph admits an $F$-decomposition. Keevash \cite{Keevash14} generalized this to hypergraphs using randomized algebraic constructions, settling a long-standing question regarding the existence of designs (see \cite{GKLO} for a different proof via the absorbing method).

A further research problem is to find $F$-decompositions in $F$-divisible graphs with high minimum degree. This problem is closely related to finding a fractional $F$-decomposition in graphs with high minimum degree (cf. \cite{BKLO,GKLMO}). A graph $G$ admits a {\em fractional $F$-decomposition} if a nonnegative weighting can be given to the copies of $F$ in $G$ so that each edge lies in copies of $F$ with total weight 1. That is, if $\mathcal{F}(G)$ is the set of copies of $F$ in $G$, then there exists a nonnegative weight function $w : \mathcal{F}(G) \to [0,1]$ such that, for each edge $e \in E(G)$, $\sum_{F \in \mathcal{F}(G):\, e \in E(F)} w(F) = 1$. For example, for any $m\geq s\geq 2$, every complete graph $K_m$ has a fractional $K_s$-decomposition by weighting all the copies of $K_s$ in $K_m$ by $1/\binom{m-2}{s-2}$. Clearly, an $F$-decomposition is a fractional $F$-decomposition with weights either $0$ or $1$.

A central problem in the study of fractional decompositions is determining the minimal value $c_F$ such that every $m$-vertex graph $G$ with minimum degree at least $c_F m$ admits a fractional $F$-decomposition. Here we restrict our attention to the case where $F$ is the complete graph $K_s$. Regarding lower bounds on $c_{K_s}$, Yuster \cite[Section 3]{Yuster} showed that $c_{K_s} \geq 1-\frac{1}{s+1}$ for any $s \geq 3$, and very recently, Delcourt, Henderson, Lesgourgues, and Postle \cite{Beyond} demonstrated that for each $s \geq 4$, there exists some $\alpha > 1$ such that $c_{K_s} \geq 1 - \frac{1}{\alpha(s+1)}$. On the other hand, there have been a series of improvements to the upper bounds on $c_{K_s}$ for $s \geq 3$. Yuster \cite{Yuster} established that for all sufficiently large $m$-vertex graphs $G$, $c_{K_s} \leq 1-\frac{1}{9s^{10}}+o(1)$; Dukes \cite{Dukes12} improved this to $c_{K_s} \leq 1-\frac{2}{9s^2(s-1)^2}$; and Barber, K\"{u}hn, Lo, Montgomery, and Osthus \cite{BKLMO} further showed that $c_{K_s} \leq 1-\frac{1}{10^4s^{3/2}}$ for $m\geq 10^4 s^3$. Montgomery \cite{Montgomery19} obtained the currently best-known bound $c_{K_s} \leq 1-\frac{1}{100s}$ (without requiring $m$ to be sufficiently large), which is the first bound with a denominator linear in $s$. For the specific case of $s=3$, for sufficiently large $m$-vertex graphs $G$, the bounds have been sequentially improved: Garaschuk \cite{Garaschuk} proved that $c_{K_3} \leq \frac{22}{23}$; Dross \cite{Dross} showed that $c_{K_3} \leq 0.9+o(1)$; and Dukes and Horsley \cite{DH} demonstrated that $c_{K_3} \leq 0.852$. Delcourt and Postle \cite{DP21} established the currently best-known bound $c_{K_3} \leq 0.82733$, which holds without requiring $m$ to be sufficiently large.

This paper focuses on the problem of fractional clique decompositions of multipartite graphs. For an integer $r \geq 2$, an $r$-partite graph $H$ with vertex partition into parts $V_1, \ldots, V_r$ is called {\em balanced} if $|V_1| = \cdots = |V_r|$. The \emph{partite minimum degree} of $H$ is defined as
$$
\hat\delta(H) := \min \{ d(v, V_i) : 1 \leq i \leq r,\; v \in V(H) \setminus V_i \}.
$$

Let $G$ be a balanced $s$-partite graph with $s\geq 3$. If $G$ admits a fractional $K_s$-decomposition, then
\begin{align}\label{eq:locally}
d(v, V_i) = d(v, V_j)
\end{align}
for every $1 \leq i<j \leq s$ and every vertex $v \notin V_i \cup V_j$ (see \cite[Section 1]{Montgomery17}). We call such a graph $G$ \emph{$s$-admissible}, while \cite{Montgomery17} and \cite{BD} use \emph{$K_s$-divisible} and \emph{locally balanced}, respectively.
A fundamental problem is determining the minimal value $\hat c_{K_s}$ such that every $s$-admissible balanced $s$-partite graph on $sn$ vertices with partite minimum degree at least $\hat c_{K_s} n$ admits a fractional $K_s$-decomposition. Barber, K\"{u}hn, Lo, Osthus, and Taylor \cite{BKLOT} pointed out that $\hat c_{K_s} \geq 1-\frac{1}{s+1}$ for all $s \geq 3$, and recently, Delcourt, Henderson, Lesgourgues, and Postle \cite{Beyond} demonstrated that for each $s \geq 4$, there exists some $\beta > 1$ such that $\hat c_{K_s} \geq 1 - \frac{1}{\beta(s+1)}$. Bowditch and Dukes \cite{BD} proved that $\hat c_{K_3} \leq \frac{24}{25}$ for all sufficiently large $3$-admissible balanced $3$-partite graphs $G$. Montgomery \cite{Montgomery17} extended this result to the general case of $K_s$ for $s \geq 3$, which holds
without requiring $G$ to be sufficiently large.

\begin{Theorem}\label{thm:DM}\emph{\cite{BD, Montgomery17}}
Let $G$ be an $s$-admissible balanced $s$-partite graph with $sn$ vertices.
\begin{enumerate}
    \item[\textup{(i)}] If $s=3$, $n$ is sufficiently large, and $\hat\delta(G)\geq \frac{24}{25}n$, then $G$ admits a fractional $K_3$-decomposition.
    \item[\textup{(ii)}] If $s\geq 3$, $n$ is a positive integer, and $\hat\delta(G) \geq (1 - \frac{1}{10^6 s^3})n$, then $G$ admits a fractional $K_s$-decomposition.
\end{enumerate}
\end{Theorem}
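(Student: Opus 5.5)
This statement is quoted from \cite{BD, Montgomery17} rather than proved here, so what follows is only an outline of how one would establish part (ii); part (i) is a sharper, $s=3$-specific version of the same scheme. The approach is \emph{iterative absorption of a fractional error by local gadgets}.

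First, start from the natural uniform weighting. Let $G$ have parts $V_1,\dots,V_s$, each of size $n$. Give every copy of $K_s$ in $G$ (a transversal clique) weight $w_0 = 1/n^{s-2}$. An edge $e=xy$ with $x\in V_i$, $y\in V_j$ then receives total weight $\kappa(e)/n^{s-2}$, where $\kappa(e)$ is the number of $K_s$ containing $e$. Since $\hat\delta(G)\ge (1-c)n$ with $c = 10^{-6}s^{-3}$, greedy counting gives $\kappa(e)\ge (1-(s-2)\cdot sc)\,n^{s-2}$ roughly. Hence every edge is covered with weight $1-\varepsilon(e)$, where $0\le \varepsilon(e)\le s^2 c$.

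Second, build an edge gadget. For each edge $e$ we want a signed combination of $K_s$-copies whose boundary (the net weight on each edge) is exactly $\mathbf{1}_e$, up to a correction that is spread very thinly over $G$. In $K_s^{(2)}$-blowup language, pick a copy $K$ of the complete $s$-partite graph $K_{2,\dots,2}$ in $G$ containing $e$. In such an octahedral structure, the alternating sum of its $2^s$ transversal cliques, with sign $(-1)^{\#\text{second choices}}$, has boundary zero. Modifying one vertex pair isolates a single edge up to terms on edges through auxiliary vertices.

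The $s$-admissibility condition \eqref{eq:locally} is exactly what is needed here. The error vector $\varepsilon$ must lie in the span of clique boundaries. Local balance ensures that at every vertex $v$ the error sums $\sum_{u\in V_i}\varepsilon(vu)$ are equal for all $i$, and these are the only obstructions in the $s$-partite setting.

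Third, average the gadgets. Average each gadget over all choices of its auxiliary vertices; by the degree condition, almost all choices are available. The negative weights are then each of order $\varepsilon(e)/n^{s-2}$, spread over $\Theta(n^{s-2})$ cliques. So the correction changes each clique's weight by at most $O(s^{O(1)}c)\,w_0$, and the final weights stay nonnegative provided $c\le 10^{-6}s^{-3}$.

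The correction itself introduces a second-order error on other edges, of size $O(s^{O(1)}c)\cdot\|\varepsilon\|$. This is handled by iteration: the errors form a geometric series. Alternatively, set up the correction as a linear map $T$ with $\|T-\mathrm{Id}\|<1$ and invert it via a Neumann series.

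The main obstacle is the second step: designing the gadget so that its boundary is exactly $e$ plus admissible residue. This requires separately handling the vertex-sum components, where admissibility is used, and keeping all constants polynomial in $s$ so that the $s^{-3}$ threshold survives. I would first treat the residue on edges at $x$ and $y$ by pairing $e$ with edges sharing an endpoint in other parts; local balance is what makes these pairings cancel.
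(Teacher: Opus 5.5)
The paper gives no proof of this theorem; it is quoted from \cite{BD,Montgomery17}. Judged on its own, your outline has a genuine gap at the step you yourself flag, and that step is essentially the entire proof.

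\textbf{The gadget step.} The signed sum of the $2^s$ transversal cliques of a $K_{2,\dots,2}$ has zero boundary, so it moves no weight between edges, and ``modifying one vertex pair'' is not a construction. The target you set also cannot be met in the form you want. The boundary of any signed combination of transversal copies of $K_s$ is locally balanced: at each $v\in V_i$ its sums over $V_j$, $j\neq i$, coincide. So a gadget with boundary $\mathbf{1}_e+\tau_e$ forces the residue $\tau_e$ to have a vertex imbalance of exactly $1$ at both endpoints of $e$, however thinly it is spread. These imbalances cancel only in the sum $\sum_e\varepsilon(e)\tau_e$, because $\varepsilon$ is locally balanced. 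Whether what remains is smaller than $\varepsilon$ (your $\|T-\mathrm{Id}\|<1$) is exactly the missing estimate.

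An exact local atom does exist: the alternating $4$-cycle $\mathbf{1}_{xy}-\mathbf{1}_{xy'}-\mathbf{1}_{x'y}+\mathbf{1}_{x'y'}$. It comes from the four cliques through $\{x,x'\}\times\{y,y'\}$ together with a common $(s-2)$-clique in the remaining parts. However, such atoms span only the locally balanced vectors whose vertex sums vanish. The component $D(v)=\sum_{u\in V_j}\varepsilon(vu)$, which is independent of $j$ by \eqref{eq:locally} as you note, therefore needs a separate vertex-level correction. That is where admissibility really enters, and your outline never sets it up.

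\textbf{The threshold.} The bound $c\le 10^{-6}s^{-3}$ is asserted rather than derived. Bookkeeping of the form $O(s^{O(1)}c)$ cannot certify that this specific threshold suffices. Moreover, gadgets derived from the $2^s$-term octahedral sum would naturally spread corrections over up to $2^s$ cliques, so even polynomial dependence on $s$ is not evident from your sketch.

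\textbf{Part (i).} This is not a sharper instance of a gadget scheme. Bowditch and Dukes use the linear-algebraic perturbation method that this paper adapts; Lemmas~\ref{prop:fromdukes} and~\ref{lem:fromdukes3} are taken from \cite{BD}. In that method one:
\begin{itemize}
\item writes the problem as $(M_\Gamma+\Delta M_{(\Gamma,G)})\mathbf{z}=\mathbf{1}$ with $\Gamma=K_{n,n,n}$;
\item diagonalizes $M_\Gamma$ through the association scheme on $E(\Gamma)$;
\item handles the singularity of $M_\Gamma$ when $r=s$ (its column space lies in the locally balanced subspace) by a projector device of the kind in Lemma~\ref{cor:fromdukes4};
\item obtains nonnegativity from an estimate $\|A^{-1}B\|_\infty\le\frac12$.
\end{itemize}
The constant $\frac{24}{25}$ is what those norm bounds yield.
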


A more general problem, as noted in both \cite{BD} and \cite{Montgomery17}, concerns the partite minimum degree required in a balanced $r$-partite graph to ensure the existence of a fractional $K_s$-decomposition for $3 \leq s \leq r$. To address this problem, we establish the following necessary conditions for an $r$-partite graph (not necessarily balanced) to admit a fractional $K_s$-decomposition in Section \ref{subsec:nece}.

\begin{Proposition}\label{prop:nece}
Let $r\geq s\geq 3$. Let $G$ be an $r$-partite graph with vertex partition into parts $V_1, \dots, V_r$ that admits a fractional $K_s$-decomposition. Then
\begin{align}\label{eq:nec-1}
d(v,V_k) \leq \frac{d(v)}{s-1}
\end{align}
for all $v \in V(G)$ and $1 \leq k \leq r$. Furthermore, if $r = s+1$, then
\begin{align}\label{eq:nec-2}
|E(V_i,V_j)| = \frac{d_i + d_j}{s-1} - \frac{|E(G)|}{\binom{s}{2}}
\end{align}
for all $1 \leq i < j \leq s+1$, where $d_\ell = \sum_{k=1, k \neq \ell}^{s+1} |E(V_\ell, V_k)|$ with $1\leq \ell\leq s+1$.
\end{Proposition}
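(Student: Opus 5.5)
Fix a fractional $K_s$-decomposition $w$ of $G$. Every copy of $K_s$ in the $r$-partite graph $G$ meets each part in at most one vertex, so it uses exactly $s$ distinct parts. Both statements come from double counting weighted edge incidences.

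For \eqref{eq:nec-1}, fix $v$ and $k$ and let $\mathcal{K}_v$ be the set of copies of $K_s$ containing $v$. Each edge at $v$ has total weight $1$, and each $K\in\mathcal{K}_v$ contains exactly $s-1$ edges at $v$, so
$$d(v)=(s-1)\sum_{K\in\mathcal{K}_v}w(K).$$
Each $K\in\mathcal{K}_v$ contains at most one edge from $v$ to $V_k$ (at most one vertex in $V_k$, and none if $v\in V_k$), so
$$d(v,V_k)=\sum_{K\in\mathcal{K}_v}w(K)\,|E(K)\cap E(v,V_k)|\le \sum_{K\in\mathcal{K}_v}w(K)=\frac{d(v)}{s-1}.$$

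For \eqref{eq:nec-2}, take $r=s+1$. Each copy of $K_s$ misses exactly one part, so its type is determined by the missing index $m\in\{1,\dots,s+1\}$. Let $W_m$ be the total weight of copies missing $V_m$. A copy of type $m$ contains exactly one edge between $V_i$ and $V_j$ whenever $m\notin\{i,j\}$, and none otherwise. Summing edge weights gives
$$|E(V_i,V_j)|=\sum_{m\ne i,j}W_m=W-W_i-W_j,\qquad W:=\sum_m W_m .$$
Counting all edges gives $|E(G)|=\binom{s}{2}W$, so $W=|E(G)|/\binom{s}{2}$. Summing the pair identity over $j\ne \ell$ gives
$$d_\ell=sW-sW_\ell-\sum_{j\ne\ell}W_j=sW-sW_\ell-(W-W_\ell)=(s-1)(W-W_\ell).$$
Hence $W_\ell=W-d_\ell/(s-1)$. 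Substituting into $|E(V_i,V_j)|=W-W_i-W_j$ gives
$$|E(V_i,V_j)|=\frac{d_i+d_j}{s-1}-W,$$
which is \eqref{eq:nec-2}.

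Neither step is a real obstacle. The only point needing care is that $K_s$-copies in an $r$-partite graph are transversal, so each copy meets each part in at most one vertex and, when $r=s+1$, misses exactly one part. Once that is noted, the argument reduces to solving this small linear system for the $W_m$.
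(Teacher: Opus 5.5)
Your proposal is correct and follows essentially the same double-counting argument as the paper. It uses the identity $d(v)=(s-1)\sum_{K\in\mathcal{K}_v}w(K)$ for the first inequality, and for $r=s+1$ it solves the same small linear system $|E(V_i,V_j)|=W-W_i-W_j$, $d_\ell=(s-1)(W-W_\ell)$, $W=|E(G)|/\binom{s}{2}$.
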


When $r=s$, \eqref{eq:nec-1} implies \eqref{eq:locally}, and thus yields that $G$ is $s$-admissible. We also call an $r$-partite graph $G$ (with $r>s$) \emph{$s$-admissible} if it satisfies \eqref{eq:nec-1}, and additionally \eqref{eq:nec-2} when $r=s+1$.

Following the technique developed by Bowditch and Dukes \cite{BD}, we investigate the partite minimum degree for fractional $K_s$-decompositions in balanced $r$-partite graphs, where $r>s\geq 3$, by exploring the association scheme defined on the edge set of a balanced complete $r$-partite graph. We establish the following result in Section \ref{sec:s+2}.

\begin{Theorem}\label{thm:main1}
Let $s \geq 3$ and $r \geq s + 2$. Let $G$ be a balanced $r$-partite graph on $rn$ vertices with $\hat\delta(G) \geq (1-c)n$, where
\begin{align}\label{eq:c-imply}
c \leq \frac{1}{(s-2)(s+1)(s-1)^4}.
\end{align}
Then $G$ admits a fractional $K_s$-decomposition.
\end{Theorem}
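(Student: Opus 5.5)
We follow the Bowditch--Dukes linear-algebraic approach. Let $K=K_{r\times n}$ be the balanced complete $r$-partite graph with parts $V_1,\dots,V_r$, and let $M$ be the edge--clique inclusion matrix of $K$. Its rows are indexed by $E(K)$ and its columns by the copies of $K_s$ in $K$. Write $G=K-L$, where $L$ is the set of missing edges. The hypothesis $\hat\delta(G)\ge(1-c)n$ means that every vertex has at most $cn$ non-neighbours in each other part.

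We look for a weight function $w$ on $\mathcal{K}_s(G)$, the copies of $K_s$ inside $G$, with $M_G w=\mathbf 1_{E(G)}$. As in Bowditch--Dukes, we make the ansatz
\[
w=M_G^{\top}y
\]
for some $y\in\mathbb R^{E(G)}$, where $M_G$ is the restriction of $M$ to $E(G)\times\mathcal{K}_s(G)$. The equation becomes $(M_GM_G^{\top})y=\mathbf 1$. Then we must check that the resulting $w$ is nonnegative.

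\textbf{Step 1: spectral analysis of the complete case.} The matrix $A=MM^{\top}$ on $E(K)$ has entries counting the copies of $K_s$ through two edges $e,f$. Its value depends only on the relation between $e$ and $f$: equal; sharing a vertex, with the third part distinct or coinciding with one of the parts of $e$; disjoint with the four endpoint parts all distinct; disjoint with three parts; and so on. These relations form an association scheme on $E(K)$ whose Bose--Mesner algebra is commutative. We compute the eigenvalues of $A$ in this scheme by decomposing $\mathbb R^{E(K)}$ into its common eigenspaces. The essential case is $r\ge s+2$: there the ``part-level'' eigenspaces are not forced to vanish, so $A$ is invertible with smallest eigenvalue of order $n^{2s-4}$ times a positive constant depending on $r,s$. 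For $r=s+1$ one eigenvalue degenerates, which is where condition \eqref{eq:nec-2} enters, explaining the split with the $r=s+1$ theorem. We also need a quantitative lower bound $\lambda_{\min}(A)\ge \theta\, n^{2s-4}$. Rather than a plain $\ell_2$ statement, we want a bound on $\|A^{-1}\|_{\infty\to\infty}$, obtained by expressing $A^{-1}$ explicitly in the scheme's basis of adjacency matrices.

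\textbf{Step 2: perturbation.} Write $A_G=M_GM_G^{\top}=A|_{E(G)}-\Delta$, where $\Delta_{ef}$ counts the $K_s$ containing $e,f$ that use an edge of $L$. By the degree condition, each row of $\Delta$ has $\ell_1$-norm at most roughly $\binom{s}{2}(s-2)\,c\,n\cdot(\text{row sum of }A)/n$. Tracking the constants, we aim to show
\[
\|A^{-1}\Delta\|_{\infty}\le c\,(s-2)(s+1)(s-1)^4\cdot(\text{something}<1),
\]
with the precise constant coming out of Step 1. Given this, a Neumann series $y=\sum_k (A^{-1}\Delta)^kA^{-1}\mathbf 1$ converges. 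Here $A^{-1}\mathbf 1$ is the constant vector $1/\big(\binom{n(r-2)}{}\text{-type count}\big)$, namely the uniform weight $y_0$ given by the fractional decomposition of $K$. Moreover, $y$ stays within a factor $(1\pm\varepsilon)$ of $y_0$ in $\ell_\infty$.

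\textbf{Step 3: nonnegativity.} For a clique $Q$ we have $w(Q)=\sum_{e\in Q}y_e$, and each $y_e\ge y_0(1-\varepsilon)>0$. Hence $w\ge0$, and $w$ is a fractional $K_s$-decomposition of $G$.

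The main obstacle is Step 1 together with the constant in Step 2. Computing the eigenvalues of the scheme (about seven classes) for general $r,s$, and obtaining an $\ell_\infty$-operator bound on $A^{-1}$ rather than merely a spectral one, is the hardest part. Our plan is to bound $\|A^{-1}\|_\infty$ through its expansion in primitive idempotents $E_j$, using $\|A^{-1}\|_\infty\le\sum_j\lambda_j^{-1}\|E_j\|_\infty$. The smallest eigenvalue, of order $(s-2)(s-1)^{\cdot}/(r-s-1)$-type, is what produces the factor $(s-2)(s+1)(s-1)^4$ in \eqref{eq:c-imply}.
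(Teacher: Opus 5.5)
Your architecture matches the paper's. You solve $M_GM_G^{\top}y=\mathbf 1$, take $w=M_G^{\top}y$, perturb off the complete $r$-partite graph using the scheme on $E(\Gamma)$, and get $y\ge 0$ from a bound on $\|A^{-1}B\|_\infty$. However, two steps do not go through as written.

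First, Step~2 mixes spaces. $A_G$ acts on $\mathbb R^{E(G)}$ while $A^{-1}$ acts on $\mathbb R^{E(K)}$. The inverse of the principal submatrix $A[E(G),E(G)]$ is not the restriction of $A^{-1}$, and it need not send $\mathbf 1$ to a constant vector. So $\sum_k(A^{-1}\Delta)^kA^{-1}\mathbf 1$ does not solve $A_Gy=\mathbf 1$. The missing device is the padded system $M_\Gamma+\Delta M_{(\Gamma,G)}$ on all of $E(\Gamma)$:
\begin{itemize}
\item its rows indexed by $E(G)$ are $(M_G\mid O)$, and its rows indexed by missing edges are kept as in $M_\Gamma$;
\item it is therefore block lower triangular, so a nonnegative solution restricts to one of $M_Gy=\mathbf 1$;
\item the perturbation must also cancel the block $M_\Gamma[E(G),\overline{E(G)}]$. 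Its row sum at $e$ is $\binom s2$ times the number of $K_s$ through $e$ containing a missing edge, which is at most $\binom s2\frac{(s-2)(s+1)}{2}\,c\binom{r-2}{s-2}n^{s-2}$. Your rough estimate is larger than this by a factor $s(s-1)/(s+1)$.
\end{itemize}
Also, nonnegativity needs $\|A^{-1}\Delta\|_\infty\le\frac12$, not merely $<1$. The Neumann series gives $\|y-y_0\|_\infty\le\frac{\varepsilon}{1-\varepsilon}\|y_0\|_\infty$, not $\varepsilon\|y_0\|_\infty$.

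Second, the stated constant has no slack, so the estimate you plan to use cannot reach it. At $r=s+2$ the numerator in Lemma~\ref{prop:norm1} equals $2(s-1)^4$, so $\|M_\Gamma^{-1}\|_\infty=\frac{4(s-1)^2}{s^2n^{s-2}}$ exactly. Meanwhile $\|\Delta M_{(\Gamma,G)}\|_\infty\le\frac c8\, s^2(s-1)^2(s+1)(s-2)n^{s-2}$. Their product is exactly $\frac c2(s-2)(s+1)(s-1)^4$, so $c=1/((s-2)(s+1)(s-1)^4)$ is precisely where it reaches $\frac12$.

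Your bound $\|A^{-1}\|_\infty\le\sum_j\lambda_j^{-1}\|E_j\|_\infty$ loses too much. At $r=s+2$ one has $\lambda_2=n^{s-2}$ and $\|E_2\|_\infty=\frac{4(s-1)}{s+1}$. The $j=2$ term alone therefore already exceeds the true norm, and for $s=3$ the whole sum is about four times too large. You would prove the theorem only for a noticeably smaller $c$.

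What is needed is the exact computation, which you mention in Step~1 but then replace by the triangle inequality. Write $M_\Gamma^{-1}=\sum_j\bigl(\sum_iD(i,j)/\lambda_i\bigr)A_j$ via the second eigenmatrix. The $A_j$ are $0/1$ matrices with disjoint supports and constant row sums $p^0_{jj}$, so $\|M_\Gamma^{-1}\|_\infty=\sum_j\bigl|\sum_iD(i,j)/\lambda_i\bigr|\,p^0_{jj}$. Then check that the resulting $r$-dependent threshold \eqref{eq:c-1} is at least its value at $r=s+2$ for every $r\ge s+2$.

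Minor points: the eigenvalues are of order $n^{s-2}$, not $n^{2s-4}$. The scheme has five classes, not about seven. The factor $(s-1)^4$ comes from this full sum, not from the smallest eigenvalue alone.
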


In Theorem \ref{thm:main1}, although $G$ is not explicitly stated to be $s$-admissible, the condition \eqref{eq:c-imply} implies its $s$-admissibility. Specifically, for any vertex $v\in V(G)$, the partite minimum degree condition $\hat\delta(G) \geq (1-c)n$ ensures that $d(v)\geq (1-c)n(r-1)$. Thus
$$\frac{d(v)}{s-1}\geq \frac{(1-c)n(r-1)}{s-1}\geq n\geq d(v,V_k)$$
for all $1 \leq k \leq r$, which verifies \eqref{eq:nec-1}.

A more precise version of the upper bound of $c$ in Theorem \ref{thm:main1} is \eqref{eq:c-1} in Lemma \ref{prop:solution1}, but this upper bound depends on both $r$ and $s$.

When $r=s+1$, this case is more complicated. Montgomery \cite[Section 1]{Montgomery17} noted that there may not be a simple set of divisibility conditions distinguishing which large graphs with a high minimum degree have a fractional $K_s$-decomposition. Nevertheless, we provide necessary conditions in Proposition \ref{prop:nece}. Based on these conditions, we establish the following result in Section \ref{sec:s+1}.

\begin{Theorem}\label{thm:main2}
Let $s \geq 3$. Let $G$ be an $s$-admissible balanced $(s+1)$-partite graph on $(s+1)n$ vertices with $\hat\delta(G) \geq (1-c)n$, where
\begin{align}\label{eq:c33}
c \leq \frac{1}{3s^3(s-2)^2}.
\end{align}
Then $G$ admits a fractional $K_s$-decomposition.
\end{Theorem}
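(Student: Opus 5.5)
We follow the Bowditch--Dukes strategy, adapted to $r=s+1$ parts. Let $K=K_{n,\dots,n}$ be the complete balanced $(s+1)$-partite graph with parts $V_1,\dots,V_{s+1}$, and let $\mathcal{K}$ be the set of copies of $K_s$ in $G$. Each such copy meets exactly $s$ of the parts, so it misses exactly one part.

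We look for a weight function of the form $w=w_0+\text{correction}$, where $w_0$ is a uniform weighting. For each edge $e$ of $G$, the deficiency $1-\sum_{Q\ni e}w_0(Q)$ is small, of order $c$, because $\hat\delta(G)\ge(1-c)n$. The correction is expressed through the inclusion matrix $M$ between edges of $K$ and copies of $K_s$ in $K$. Concretely, we want to solve $MM^{T}x=b$ on the edge space and then set the weight of $Q$ to be a linear combination of $x_e$ over $e\in Q$. The matrix $MM^{T}$ lies in the Bose--Mesner algebra of the association scheme on $E(K)$, whose relations record how two edges relate: sharing a vertex or not, and which parts their endpoints lie in.

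\textbf{Step 1.} Compute the eigenvalues and eigenspaces of $MM^{T}$ via the scheme. For $r=s+1$ there is one genuinely degenerate family. A vector indexed by pairs of parts, namely $x_e=f(\{i,j\})$ for $e\in E(V_i,V_j)$, lies in the kernel of $M^{T}$ exactly when $\sum_{\{i,j\}\subseteq S}f(\{i,j\})=0$ for every $s$-set $S$ of parts. This is the source of condition \eqref{eq:nec-2}. Together with the $0$-eigenspace of vectors that are unbalanced at a vertex (the source of \eqref{eq:nec-1}), this accounts for all of $\ker MM^{T}$. On the orthogonal complement, all eigenvalues are bounded below by roughly $n^{s-2}$ times an explicit constant depending on $s$.

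\textbf{Step 2.} Show that the right-hand side $b$, namely the edge deficiency vector of $G$ viewed inside $K$ with non-edges of $G$ assigned appropriately, is orthogonal to the kernel. This is where $s$-admissibility enters. Orthogonality to the vertex-imbalance part follows from \eqref{eq:nec-1} as in the $r=s$ case. Orthogonality to the part-pair kernel vectors follows from \eqref{eq:nec-2}: the part-pair edge counts $|E(V_i,V_j)|$ must be exactly those forced by a fractional decomposition. We would verify this by checking that the kernel is spanned by specific vectors $f$ whose inner product with $b$ equals the difference of the two sides of \eqref{eq:nec-2}.

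\textbf{Step 3.} Bound the correction in sup norm. We iterate, or use a perturbation bound $\|(MM^{T})^{-1}b\|_\infty\le\lambda_{\min}^{-1}\cdot\|b\|_\infty\cdot C(s)$, using the explicit spectral projections. This gives that each copy's correction is at most $c\cdot C'(s)$ times the uniform weight, so nonnegativity holds once $c\le 1/(3s^3(s-2)^2)$.

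The main obstacle is Step 3. The sup-norm control of the inverse restricted to the complement of the kernel is delicate, because the part-pair eigenspace has a small eigenvalue gap relative to the others. To handle this, we would first try to split $b$ into its projection onto the part-pair-constant subspace, which can be solved exactly by hand using the $\binom{s+1}{2}$ part-pair weights, and a remainder, which the general eigenvalue bound handles in the same way as the proof of Theorem \ref{thm:main1}.
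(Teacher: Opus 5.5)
Your overall frame (the Bose--Mesner algebra on $E(\Gamma)$, a part-pair kernel, and \eqref{eq:nec-2} as the orthogonality condition) is the right one. However, the spectral picture you build on is wrong for $r=s+1$, and the decisive step is missing.

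By Proposition~\ref{prop:spectrum}, $M_\Gamma$ has eigenvalues $\tfrac{s(s-1)^2}{2}n^{s-2}$, $(s-1)n^{s-2}$, $0$, $(s-1)^2n^{s-2}$, $n^{s-2}$, $(s-1)n^{s-2}$ on $U_0,\dots,U_5$. So $\ker M_\Gamma=U_2$ exactly: the part-pair vectors with $\sum_j f(\{i,j\})=0$ for every $i$. There is no vertex-imbalance kernel; that is a feature of $r=s$ only. Here the vertex-imbalance space $U_4$ carries the \emph{smallest positive} eigenvalue $n^{s-2}$. Moreover \eqref{eq:nec-1} is an inequality, already implied by the degree hypothesis, and yields no orthogonality relation at all. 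The part-pair directions outside the kernel have the large eigenvalues $\tfrac{s(s-1)^2}{2}n^{s-2}$ and $(s-1)n^{s-2}$. So splitting off the part-pair projection of $b$ does not target any real bottleneck.

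The genuine gap is Step~3. The system you must solve is $M_G\mathbf{y}=\mathbf{1}$, so the weights must sit on copies of $K_s$ in $G$, and $M_G$ is not in the algebra. Solving $MM^\top x=b$ for the complete graph puts weight on copies in $\Gamma$. Also $(MM^\top)^{-1}$ does not exist. Hence neither the perturbation lemma (Lemma~\ref{prop:fromdukes} requires $A$ invertible) nor an unspecified iteration goes through as written. You need a mechanism that keeps the whole argument inside $U_2^\perp$.

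The paper does this by regularizing. It replaces $M_G$ by $M_G+\eta E_2[E(G),E(G)]$ and proves two facts. First, $M_GE_2[E(G),E(G)]=O$, because $E_2W_\Gamma=O$ (as $U_2=\ker W_\Gamma^\top$). Second, $E_2[E(G),E(G)]\mathbf{1}=\mathbf{0}$, from \eqref{eq:nec-2} via Lemma~\ref{cor:nece}; this is where admissibility enters. By Lemma~\ref{lem:fromdukes3}, the solution of the regularized system then solves $M_G\mathbf{y}=\mathbf{1}$. The paper then applies Lemma~\ref{prop:fromdukes} to the invertible $M_\Gamma+\eta^*E_2$ with $\eta^*=sn^{s-2}/(s+2)$, paying an extra $\eta^*\|E_2[E(G),\overline{E(G)}]\|_\infty\le 4(s-2)c\eta^*/s$.

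Your iteration can be repaired by an equivalent observation. Every residual $\mathbf{1}_G-W_\Gamma w$ with $w$ supported on $\mathcal{K}_s(G)$ lies in $U_2^\perp$, since $E_2\mathbf{1}_G=\mathbf{0}$ and $E_2W_\Gamma=O$. Then $M_\Gamma^{+}:=\sum_{i\neq 2}\lambda_i^{-1}E_i$ solves each step exactly. Restricting the correction to copies in $G$ leaves the new residual $-\Delta M_{(\Gamma,G)}M_\Gamma^{+}b$, which contracts by the factor $\|\Delta M_{(\Gamma,G)}\|_\infty\|M_\Gamma^{+}\|_\infty$. You check orthogonality only for the initial $b$, which is not enough.

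Finally, the constant $1/(3s^3(s-2)^2)$ is asserted rather than derived. It comes from multiplying the explicit norm in Lemma~\ref{prop:norm4} by the bound in Lemma~\ref{prop:norm7}, which gives \eqref{eq:c-22}.
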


Using the same arguments as those after Theorem \ref{thm:main1}, one can see that for the graph $G$ in Theorem \ref{thm:main2}, the condition \eqref{eq:nec-1} is automatically satisfied because of \eqref{eq:c33}. The reader will see later that in the proof of Theorem \ref{thm:main2} we will use the condition \eqref{eq:nec-2} (see Lemma \ref{prop:fromdukes1}, which uses Lemma \ref{cor:nece}). Therefore, in the statement of Theorem \ref{thm:main2} we explicitly require that $G$ is $s$-admissible.

A more precise version of the upper bound of $c$ in Theorem \ref{thm:main2} is \eqref{eq:c-22} in Lemma \ref{prop:solution2}.

In fact, there is a simple way to quickly derive a fractional $K_s$-decomposition from Theorem \ref{thm:DM} for an $r$-admissible balanced $r$-partite graph for any $r>s$. Specifically, if an $r$-admissible balanced $r$-partite graph $G$ (which satisfies \eqref{eq:locally}) admits a fractional $K_r$-decomposition, then since every $K_r$ has a fractional $K_s$-decomposition, we can decompose each $K_r$ in the fractional $K_r$-decomposition of $G$ into copies of $K_s$. This yields a fractional $K_s$-decomposition of $G$. Hence as a straightforward corollary of Theorem \ref{thm:DM}(ii), we have that for an $r$-admissible balanced $r$-partite graph $G$ with $rn$ vertices, if $3\leq s\leq r$ and $\hat\delta(G) \geq (1 - \frac{1}{10^6 r^3})n$, then $G$ admits a fractional $K_s$-decomposition. However, by Proposition \ref{prop:nece}, the necessary condition required for a balanced $r$-partite graph $G$ to admit a fractional $K_s$-decomposition is that $G$ is $s$-admissible, not $r$-admissible; the former condition is weaker. Therefore, the requirement for the graph $G$ in our Theorems \ref{thm:main1} and \ref{thm:main2} is much weaker. On the other hand, regarding Theorem \ref{thm:main1}, the upper bound for $c$ is sharper than $\frac{1}{10^6 r^3}$ for all $s+2 \le r \le 105$. Moreover, for $r > 105$, the upper bound in Theorem \ref{thm:main1} remains strictly sharper than $\frac{1}{10^6 r^3}$ for all $s$ satisfying $3 \le s \le 10 \sqrt{r}$. As for Theorem \ref{thm:main2}, in the specific case where $r = s+1$, the upper bound provides a strict improvement over $\frac{1}{10^6 r^3}$ for all $r \le 581$.

Theorems \ref{thm:main1} and \ref{thm:main2} can be used to establish the asymptotic existence of an $\varepsilon$-approximate $K_s$-decomposition of an $s$-admissible balanced $r$-partite graph. Let $\varepsilon$ be a real number satisfying $0 < \varepsilon \le 1$. An \emph{$\varepsilon$-approximate $F$-decomposition} of a graph $G$ is a collection of edge-disjoint copies of $F$ covering all but at most $\varepsilon|E(G)|$ edges of $G$. Haxell and R\"{o}dl \cite{HR} demonstrated that for any fixed graph $F$ and any $\varepsilon>0$, there exists an integer $m_0$ such that every graph $G$ on $m\ge m_0$ vertices admitting a fractional $F$-decomposition also admits an $\varepsilon$-approximate $F$-decomposition. Thus any balanced $r$-partite graph $G$ satisfying the conditions of Theorems \ref{thm:main1} and \ref{thm:main2} admits an $\varepsilon$-approximate $K_s$-decomposition.

The remainder of this paper is organized as follows. Section \ref{subsec:nece} establishes necessary conditions for fractional $K_s$-decompositions of $r$-partite graphs. Section \ref{subsec:reduction} reduces the problem of finding a fractional $K_s$-decomposition of a balanced $r$-partite graph to solving a system of linear equations. Section \ref{sec:scheme} characterizes the association scheme defined on the edge set of a balanced complete $r$-partite graph. The proofs of Theorem~\ref{thm:main1} and Theorem~\ref{thm:main2} are provided in Sections \ref{sec:s+2} and \ref{sec:s+1}, respectively. Concluding remarks are given in Section \ref{sec:conc}.

\section{Necessary conditions for fractional $K_s$-decompositions of $r$-partite graphs} \label{subsec:nece}

This section gives the proof of Proposition \ref{prop:nece}, thereby providing necessary conditions for fractional $K_s$-decompositions of $r$-partite graphs (not necessarily balanced) with $r\geq s\geq 3$.

\begin{proof}[\bf{Proof of Proposition \ref{prop:nece}}]

Let $\mathcal{K}$ denote the set of all copies of $K_s$ in $G$. By the definition of a fractional $K_s$-decomposition, there exists a weight function $\omega: \mathcal{K} \to [0, 1]$ such that for every edge $e \in E(G)$, $\sum_{K \in \mathcal{K}:\, e \in E(K)} \omega(K) = 1$. Fix an arbitrary vertex $v \in V_i$ for some $i \in \{ 1,\ldots,r \}$, and define $\mathcal{K}_v = \{K \in \mathcal{K} : v \in V(K)\}$. Then
\begin{align*}
d(v) = \sum_{u \in N(v)} 1 &= \sum_{u \in N(v)} \left( \sum_{K \in \mathcal{K}: \{v,u\} \in E(K)} \omega(K) \right) \\
&= \sum_{K \in \mathcal{K}_v}  \sum_{u \in V(K) \setminus \{v\}}  \omega(K)  = (s-1) \sum_{K \in \mathcal{K}_v} \omega(K).
\end{align*}
Therefore, for any $k \in \{ 1,\ldots,r \} \setminus \{ i \}$, we have
\begin{align*}
d(v, V_k) &= \sum_{u \in V_k \cap N(v)} 1 = \sum_{u \in V_k \cap N(v)} \left( \sum_{K \in \mathcal{K}: \{v,u\} \in E(K)} \omega(K) \right) \\
&= \sum_{K \in \mathcal{K}_v} \omega(K) | V(K)\cap V_k | \leq \sum_{K \in \mathcal{K}_v} \omega(K) = \frac{d(v)}{s-1}.
\end{align*}
For $k = i$, it is trivial that $d(v, V_i) = 0 \leq \frac{d(v)}{s-1}$. Therefore, \eqref{eq:nec-1} holds for any $1 \leq k \leq r$.

Now, suppose $r=s+1$. Let $t = \sum_{K \in \mathcal{K}} \omega(K)$. Then
\begin{align*}
|E(G)| = \sum_{e \in E(G)} 1 = \sum_{e \in E(G)} \left( \sum_{K \in \mathcal{K}: e \in E(K)} \omega(K) \right) = \sum_{K \in \mathcal{K}} \sum_{e \in E(K)} \omega(K)= \binom{s}{2} t,
\end{align*}
so $t = |E(G)|/\binom{s}{2}$. Since $G$ is an $(s+1)$-partite graph, every copy of $K_s$ in $G$ intersects exactly $s$ distinct parts of $G$, and thus is disjoint from exactly one part. For $i \in \{ 1,\ldots, s+1 \} $, let $\mathcal{S}_i = \{K \in \mathcal{K}: V(K)\cap V_i=\emptyset\}$, and set $t_i=\sum_{K \in \mathcal{S}_i } \omega(K)$. Since $\mathcal{K}$ is the disjoint union of $\mathcal{S}_1, \ldots, \mathcal{S}_{s+1}$, we have $\sum_{i=1}^{s+1} t_i = t$.

For any $1 \leq i < j \leq s+1$, the number of edges between $V_i$ and $V_j$ can be calculated as
\begin{align*}
| E(V_i, V_j) | &= \sum_{e \in E(V_i, V_j)} \left( \sum_{K \in \mathcal{K}: e \in E(K)} \omega(K) \right) = \sum_{K \in \mathcal{K}} |E(K) \cap E(V_i, V_j)| \omega(K) \\
&= \sum_{\ell=1}^{s+1} \sum_{K \in \mathcal{S}_\ell} |E(K) \cap E(V_i, V_j)| \omega(K) = \sum_{\ell \notin \{i, j\}} \sum_{K \in \mathcal{S}_\ell} \omega(K) = \sum_{\ell \notin \{i, j\}} t_\ell  =  t - t_i - t_j.
\end{align*}
It follows that for any $1\leq \ell \leq s+1$,
\begin{align*}
d_\ell = \sum_{\substack{k=1\\ k\neq \ell}}^{s+1} |E(V_\ell, V_k)| =
\sum_{\substack{k=1\\ k\neq \ell}}^{s+1} (t - t_\ell - t_k) =
 s (t - t_\ell) - \sum_{\substack{k=1\\ k\neq \ell}}^{s+1} t_k = (s-1)(t - t_\ell),
\end{align*}
which yields $t_\ell = t - \frac{d_\ell}{s-1}$. Substituting the expressions for $t_i$ and $t_j$ into the equation for $|E(V_i, V_j)|$, we obtain
$$ | E(V_i, V_j)| = t - \left( t - \frac{d_i}{s-1} \right) - \left( t - \frac{d_j}{s-1} \right) = \frac{d_i + d_j}{s-1} - t = \frac{d_i + d_j}{s-1} - \frac{|E(G)|}{\binom{s}{2}}. $$
Therefore, \eqref{eq:nec-2} holds for all $1 \leq i < j \leq s+1$.
\end{proof}

The following lemma will be used to prove Theorem \ref{thm:main2} in Section \ref{sec:s+1}.

\begin{Lemma}\label{cor:nece}
Let $s\geq 3$ and $G$ be an $(s+1)$-partite graph with vertex partition $\mathcal{P} = \{V_1,\dots, V_{s+1}\}$. Let $N$ be an $\binom{s+1}{2} \times \binom{s+1}{s}$ matrix whose rows and columns are indexed by the $2$-subsets $T$ and $s$-subsets $K$ of $\mathcal{P}$, respectively, where the $(T,K)$-entry is $1$ if $T \subseteq K$ and $0$ otherwise. Let $\mathbf{b}$ be the column vector indexed by the $2$-subsets of $\mathcal{P}$, with entries $\mathbf{b}(\{V_i,V_j\})=|E(V_i,V_j)|$ for $1\leq i<j\leq s+1$. If $G$ is $s$-admissible, then the linear system $N\mathbf{x} = \mathbf{b}$ admits a solution $\mathbf{x} = (x_1, \dots, x_{s+1})^\top$, where $x_\ell = \frac{|E(G)|}{\binom{s}{2}} - \frac{d_\ell}{s-1}\geq 0$ and $d_\ell = \sum_{k=1, k \neq \ell}^{s+1} |E(V_\ell, V_k)|$ for each $1\leq \ell \leq s+1$.
\end{Lemma}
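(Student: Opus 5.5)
Here $s$-subsets $K$ of $\mathcal{P}$ are indexed by the unique part they omit. I identify the column for $\mathcal{P}\setminus\{V_\ell\}$ with $x_\ell$. Then row $\{V_i,V_j\}$ of $N\mathbf{x}$ equals $\sum_{\ell\notin\{i,j\}} x_\ell = \sum_{\ell=1}^{s+1}x_\ell - x_i - x_j$. The plan is to verify the equation row by row, using \eqref{eq:nec-2}, and then check nonnegativity using \eqref{eq:nec-1}.

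\textbf{Step 1: the equations.} Write $t=|E(G)|/\binom{s}{2}$ and $x_\ell = t - d_\ell/(s-1)$. Since $\sum_\ell d_\ell = 2|E(G)| = s(s-1)t$, we get
\[
\sum_{\ell=1}^{s+1} x_\ell = (s+1)t - st = t.
\]
Hence row $\{V_i,V_j\}$ equals
\[
t - x_i - x_j = \frac{d_i+d_j}{s-1} - t,
\]
which is exactly $|E(V_i,V_j)|$ by \eqref{eq:nec-2}. This step is mechanical.

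\textbf{Step 2: nonnegativity.} We need $d_\ell \le (s-1)t = 2|E(G)|/s$. Sum \eqref{eq:nec-1} over all $v\notin V_\ell$ with $k=\ell$:
\[
d_\ell = \sum_{v\notin V_\ell} d(v,V_\ell) \le \frac{1}{s-1}\sum_{v\notin V_\ell} d(v) = \frac{2|E(G)| - d_\ell}{s-1}.
\]
Rearranging gives $s\,d_\ell \le 2|E(G)|$, as required.

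The only point needing care is this nonnegativity argument: one must sum \eqref{eq:nec-1} over the vertices outside $V_\ell$ and use $\sum_{v\notin V_\ell} d(v) = 2|E(G)| - d_\ell$. After that the rearrangement is direct. The rest is bookkeeping.
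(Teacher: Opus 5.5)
Your proof is correct, and Step 1 is the paper's computation, organized slightly differently through the identity $\sum_{\ell} x_\ell = t$.

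Step 2 takes a somewhat different route from the paper. The paper fixes a single part $V_k$ with $k\neq\ell$ and sums \eqref{eq:nec-1} over $v\in V_k$, which gives $|E(V_k,V_\ell)|\le d_k/(s-1)$. It then substitutes \eqref{eq:nec-2} for $|E(V_k,V_\ell)|$; the $d_k$ terms cancel, leaving $d_\ell/(s-1)\le t$.

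You instead sum \eqref{eq:nec-1} over all vertices outside $V_\ell$. This is the paper's inequality summed over every $k\neq\ell$, and it yields $s\,d_\ell\le 2|E(G)|$ without ever invoking \eqref{eq:nec-2}.

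The two arguments are equally short. Yours shows something slightly stronger: nonnegativity of $x_\ell$ follows from \eqref{eq:nec-1} alone, so \eqref{eq:nec-2} is needed only to make $\mathbf{x}$ solve $N\mathbf{x}=\mathbf{b}$. The paper's argument draws on both conditions for nonnegativity.
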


\begin{proof}
Without loss of generality, assume that for each $1 \leq \ell \leq s+1$, the $\ell$-th column of $N$ is indexed by the $s$-subset $\mathcal{P} \setminus \{ V_\ell \}$. Therefore, the equation in the linear system $N\mathbf{x} = \mathbf{b}$ corresponding to the row $\{V_i, V_j\}$ can be written as:
$$\sum_{\substack{\ell=1\\ \ell \notin \{i, j\}}}^{s+1} x_\ell = |E(V_i, V_j)|.$$
We claim that $x_\ell = \frac{|E(G)|}{\binom{s}{2}} - \frac{d_\ell}{s-1}$ is a solution to the above equation. Indeed,
$$\sum_{\substack{\ell=1\\ \ell \notin \{i, j\}}}^{s+1} x_\ell =
\sum_{\substack{\ell=1\\ \ell \notin \{i, j\}}}^{s+1} \left( \frac{|E(G)|}{\binom{s}{2}} - \frac{d_\ell}{s-1} \right) = (s-1)\frac{|E(G)|}{\binom{s}{2}} - \frac{1}{s-1} \sum_{\substack{\ell=1\\ \ell \notin \{i, j\}}}^{s+1} d_\ell.$$
Since $\sum_{\ell=1}^{s+1} d_\ell = 2|E(G)|$, we have
$$\sum_{\substack{\ell=1\\ \ell \notin \{i, j\}}}^{s+1} x_\ell = (s-1)\frac{|E(G)|}{\binom{s}{2}} - \frac{2|E(G)| - d_i - d_j}{s-1} = \frac{d_i + d_j}{s-1} - \frac{|E(G)|}{\binom{s}{2}} = |E(V_i,V_j)|,$$
where the last equality holds since $G$ is $s$-admissible (using \eqref{eq:nec-2}).

Now it remains to verify that $x_\ell\geq 0$ for each $1\leq \ell\leq s+1$. For $1\leq k\leq s+1$ and $k\neq \ell$, we have
\begin{align}\label{eq:e11}
|E(V_k,V_\ell)|= \sum_{v \in V_k} d(v, V_\ell) \leq \sum_{v \in V_k} \frac{d(v)}{s-1} = \frac{d_k}{s-1},
\end{align}
where the inequality holds since $G$ is $s$-admissible (using \eqref{eq:nec-1}). By \eqref{eq:nec-2},
\begin{align}\label{eq:e12}
|E(V_k,V_\ell)|=\frac{d_k + d_\ell}{s-1} - \frac{|E(G)|}{\binom{s}{2}}.
\end{align}
Combining \eqref{eq:e11} and \eqref{eq:e12}, we have
$$x_\ell = \frac{|E(G)|}{\binom{s}{2}} - \frac{d_\ell}{s-1} \geq 0.$$
This completes the proof.
\end{proof}

\section{Problem reduction}\label{subsec:reduction}

This section employs the same technique as in \cite{BD} to reduce the problem of finding a fractional $K_s$-decomposition of a balanced $r$-partite graph to solving a system of linear equations.

Let $H$ be a graph and $s\geq3$ be an integer. Let $\mathcal{K}_s(H)$ denote the set of all copies of $K_s$ in $H$. Let $W_H$ be the matrix whose rows are indexed by edges $e \in E(H)$ and columns by $K_s \in \mathcal{K}_s(H)$, where the $(e,K_s)$-entry is $1$ if $e \in E(K_s)$ and $0$ otherwise.
For any real vector $\mathbf{x}$, the notation $\mathbf{x} \geq 0$ means that all entries of $\mathbf{x}$ are non-negative. Let $\mathbf{1}$ denote the all-ones vector, with dimension implicit from the context. The existence of a fractional $K_s$-decomposition of $H$ is equivalent to the existence of a vector $\mathbf{x} \geq 0$ satisfying $W_H \mathbf{x} = \mathbf{1}$.

Let
\begin{align}\label{eq:M_H}
M_H := W_H W_H^\top.
\end{align}
Then $M_H$ is a matrix with rows and columns indexed by $E(H)$, and its $(e,e')$-entry equals the number of copies of $K_s$ in $H$ containing both edges $e$ and $e'$. If the linear system $M_H \mathbf{y} = \mathbf{1}$ admits a solution $\mathbf{y}\geq0$, then the vector $\mathbf{x} = W_H^\top \mathbf{y}\geq0$ satisfies $W_H \mathbf{x} = \mathbf{1}$, thereby yielding a fractional $K_s$-decomposition of $H$.

\begin{Lemma}\label{lem:frac}
Let $s\geq3$. If the linear system $M_H \mathbf{y} = \mathbf{1}$ admits a solution $\mathbf{y}\geq0$, then $H$ admits a fractional $K_s$-decomposition.
\end{Lemma}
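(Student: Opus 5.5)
The plan is to produce the required weighting directly from $\mathbf{y}$. Suppose $\mathbf{y}\geq 0$ satisfies $M_H\mathbf{y}=\mathbf{1}$, where $M_H=W_HW_H^\top$ is as in \eqref{eq:M_H}. Set $\mathbf{x}:=W_H^\top\mathbf{y}$, a vector indexed by $\mathcal{K}_s(H)$. The candidate weight is therefore, for each copy $K\in\mathcal{K}_s(H)$,
$$\omega(K)=\mathbf{x}(K)=\sum_{e\in E(K)}\mathbf{y}(e).$$

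The proof has three short steps.

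\textbf{Nonnegativity.} Every entry of $W_H$ is $0$ or $1$, and $\mathbf{y}\geq 0$. Hence each $\mathbf{x}(K)$ is a sum of nonnegative numbers, so $\mathbf{x}\geq 0$.

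\textbf{Edge constraints.} By associativity of matrix multiplication,
$$W_H\mathbf{x}=W_H(W_H^\top\mathbf{y})=(W_HW_H^\top)\mathbf{y}=M_H\mathbf{y}=\mathbf{1}.$$
Read entrywise, this says that for each edge $e\in E(H)$,
$$\sum_{K\in\mathcal{K}_s(H):\,e\in E(K)}\omega(K)=1.$$
This is exactly the defining condition of a fractional $K_s$-decomposition.

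\textbf{Range of $\omega$.} The definition also asks that $\omega$ take values in $[0,1]$. Fix any copy $K$ and any edge $e\in E(K)$. Since all weights are nonnegative, $\omega(K)$ is at most the total weight over copies containing $e$, which is $1$. So $\omega(K)\in[0,1]$.

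Finally, if $\mathcal{K}_s(H)$ were empty while $E(H)\neq\emptyset$, then $M_H$ would be the zero matrix and $M_H\mathbf{y}=\mathbf{1}$ would have no solution, so the hypothesis excludes this case. No step is a real obstacle; the only points needing a remark are the upper bound $\omega\leq 1$ and this degenerate case.
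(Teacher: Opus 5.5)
Your proof is correct and is exactly the paper's argument: set $\mathbf{x}=W_H^\top\mathbf{y}\geq 0$ and observe that $W_H\mathbf{x}=M_H\mathbf{y}=\mathbf{1}$. Your check that each weight lies in $[0,1]$ fills in a detail the paper leaves implicit, while the remark on the degenerate case is harmless but not needed, since the argument goes through regardless.
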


Let $r\geq3$ be an integer. Throughout the rest of this paper, we assume that
\begin{itemize}
\item $\Gamma$ is a balanced complete $r$-partite graph on $rn$ vertices, and
\item $G$ is a spanning $r$-partite subgraph of $\Gamma$ with the same vertex partition.
\end{itemize}
Let $\overline{E(G)} = E(\Gamma) \setminus E(G)$. We adopt the convention that,
\begin{itemize}\item for any $|E(\Gamma)|\times |E(\Gamma)|$ matrix $N$ whose rows and columns are indexed by $E(\Gamma)$, we order the edges in $E(\Gamma)$ such that those in $E(G)$ appear first.
\end{itemize}For any subsets $X, Y \subseteq E(\Gamma)$, let $N[X, Y]$ denote the submatrix of $N$ induced by the rows indexed by $X$ and the columns indexed by $Y$.
Let $\Delta M_{(\Gamma,G)}$ be the matrix with rows and columns indexed by $E(\Gamma)$ given by
\begin{align}\label{eq:Delta_M}
\Delta M_{(\Gamma,G)} :=
\left(
\begin{array}{c:c}
M_G - M_\Gamma[E(G), E(G)] & -M_\Gamma[E(G), \overline{E(G)}] \\
\hdashline
\\[-2ex]
\multicolumn{2}{c}{O} \\
\end{array}\right).
\end{align}
Then
$$
M_\Gamma + \Delta M_{(\Gamma,G)} =
\left(
\begin{array}{c:c}
M_G & O \\
\hdashline
\\[-2ex]
\multicolumn{2}{c}{M_\Gamma[\overline{E(G)}, E(\Gamma)]} \\
\end{array}
\right)
$$
Thus a solution $\mathbf{z} \geq 0$ to the linear system $( M_\Gamma + \Delta M_{(\Gamma,G)} )\mathbf{z} = \mathbf{1}$ induces a solution $\mathbf{y} \geq 0$ to the linear system $M_G \mathbf{y} = \mathbf{1}$. By Lemma~\ref{lem:frac}, this leads to a fractional $K_s$-decomposition of $G$. 

\begin{Lemma}\label{lem:frac1}
Let $r\geq s\geq3$. If the linear system $(M_\Gamma + \Delta M_{(\Gamma,G)})\mathbf{z} = \mathbf{1}$ admits a solution $\mathbf{z} \geq 0$, then $G$ admits a fractional $K_s$-decomposition.
\end{Lemma}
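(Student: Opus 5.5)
The plan is to turn a nonnegative solution $\mathbf{z}$ of the modified system directly into a nonnegative solution $\mathbf{y}$ of $M_G\mathbf{y}=\mathbf{1}$, and then invoke Lemma~\ref{lem:frac}. Using the block form of $M_\Gamma+\Delta M_{(\Gamma,G)}$ displayed just above the statement, I split $\mathbf{z}=(\mathbf{z}_1,\mathbf{z}_2)$ according to the ordering $E(\Gamma)=E(G)\cup\overline{E(G)}$. Here $\mathbf{z}_1$ is indexed by $E(G)$ and $\mathbf{z}_2$ by $\overline{E(G)}$.

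First I check the block computation. By \eqref{eq:Delta_M}, the rows of $M_\Gamma+\Delta M_{(\Gamma,G)}$ indexed by $E(G)$ are
$$\bigl(M_\Gamma[E(G),E(G)]+M_G-M_\Gamma[E(G),E(G)],\;M_\Gamma[E(G),\overline{E(G)}]-M_\Gamma[E(G),\overline{E(G)}]\bigr)=(M_G,\;O).$$
The rows indexed by $\overline{E(G)}$ are unchanged, since $\Delta M_{(\Gamma,G)}$ vanishes there.

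Next I restrict the equation $(M_\Gamma+\Delta M_{(\Gamma,G)})\mathbf{z}=\mathbf{1}$ to the rows indexed by $E(G)$. This gives $M_G\mathbf{z}_1+O\mathbf{z}_2=\mathbf{1}$, that is, $M_G\mathbf{z}_1=\mathbf{1}$. Set $\mathbf{y}=\mathbf{z}_1$. Then $\mathbf{y}\geq 0$, because it is a subvector of $\mathbf{z}\geq 0$. The remaining rows, indexed by $\overline{E(G)}$, are simply discarded.

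Finally, Lemma~\ref{lem:frac} applies to $H=G$ and yields the fractional $K_s$-decomposition. Concretely, $\mathbf{x}=W_G^\top\mathbf{y}\geq 0$ satisfies $W_G\mathbf{x}=M_G\mathbf{y}=\mathbf{1}$.

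There is no real obstacle here. The only point needing care is the bookkeeping: $M_G$ is indexed by $E(G)$ and counts only copies of $K_s$ inside $G$, whereas $M_\Gamma[E(G),E(G)]$ counts copies in $\Gamma$, and the subtraction in $\Delta M_{(\Gamma,G)}$ replaces the latter by the former. The hypothesis $r\geq s\geq 3$ is used only so that copies of $K_s$ exist and Lemma~\ref{lem:frac} is applicable.
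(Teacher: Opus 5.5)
Your proposal is correct and is exactly the paper's argument: the rows of $M_\Gamma+\Delta M_{(\Gamma,G)}$ indexed by $E(G)$ are $(M_G\;\;O)$, so the $E(G)$-part of a nonnegative solution $\mathbf{z}$ is a nonnegative solution of $M_G\mathbf{y}=\mathbf{1}$, and Lemma~\ref{lem:frac} finishes. One small correction to your closing remark: Lemma~\ref{lem:frac} needs no assumption that copies of $K_s$ exist, and the hypothesis $r\geq s\geq 3$ plays no role in this step.
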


Let $m$ be a positive integer and $\mathbb{R}^{m \times m}$ denote the set of all $m \times m$ matrices over $\mathbb{R}$. For $A \in \mathbb{R}^{m \times m}$, the \emph{infinity norm} $\|A\|_\infty$ of $A$ is defined as the maximum absolute row sum of $A$, i.e., $\|A\|_\infty = \max\limits_{1\leq i\leq m} \sum_{j=1}^m |A(i,j)|$. The following lemma can be used to determine whether a linear system admits a nonnegative solution.

\begin{Lemma}\label{prop:fromdukes}\emph{\cite[Corollary 3.3]{BD}}
Let $m$ be a positive integer, and $A, B \in \mathbb{R}^{m \times m}$. Assume that
\begin{enumerate}
\item[$(1)$] $A$ is invertible;
\item[$(2)$] the linear system $A\mathbf{x} = \mathbf{d}$ admits a unique solution $\mathbf{x}=a\mathbf{1}$, where $a\geq 0$ and $\mathbf{1}$ is the vector of all ones;
\item[$(3)$] $\|A^{-1}B\|_\infty \leq \frac{1}{2}$.
\end{enumerate}
Then $A + B$ is invertible and the linear system $(A + B)\mathbf{z} = \mathbf{d}$ admits a unique solution $\mathbf{z}\geq0$.
\end{Lemma}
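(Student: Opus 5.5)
The plan is a Neumann-series perturbation argument. Since $A$ is invertible, write $A+B = A(I + A^{-1}B)$. By hypothesis (3), $\|A^{-1}B\|_\infty \leq \frac12 < 1$, and the infinity norm is an induced operator norm, hence submultiplicative. Therefore the series $\sum_{k\geq 0}(-A^{-1}B)^k$ converges absolutely. Its sum is the inverse of $I + A^{-1}B$, and its norm satisfies
\[
\|(I+A^{-1}B)^{-1}\|_\infty \leq \frac{1}{1-\|A^{-1}B\|_\infty} \leq 2.
\]
It follows that $A+B$ is invertible, being a product of two invertible matrices. Consequently the system $(A+B)\mathbf{z}=\mathbf{d}$ has exactly one solution, namely $\mathbf{z}=(A+B)^{-1}\mathbf{d}$.

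Next I would rewrite this solution in terms of $\mathbf{x}=a\mathbf{1}$. Hypothesis (2) gives $\mathbf{d}=A\mathbf{x}$, so
\[
\mathbf{z}=(I+A^{-1}B)^{-1}A^{-1}\mathbf{d}=(I+A^{-1}B)^{-1}\mathbf{x}.
\]
Set $E=A^{-1}B$. Then $\mathbf{z}$ satisfies $\mathbf{z}+E\mathbf{z}=\mathbf{x}$, which rearranges to $\mathbf{z}-\mathbf{x}=-E\mathbf{z}$.

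The key step is to bound how far $\mathbf{z}$ can be from $\mathbf{x}$, entrywise. Taking infinity norms in $\mathbf{z}-\mathbf{x}=-E\mathbf{z}$ gives
\[
\|\mathbf{z}-\mathbf{x}\|_\infty \leq \tfrac12\|\mathbf{z}\|_\infty \leq \tfrac12\bigl(\|\mathbf{x}\|_\infty+\|\mathbf{z}-\mathbf{x}\|_\infty\bigr).
\]
Rearranging yields $\|\mathbf{z}-\mathbf{x}\|_\infty\leq\|\mathbf{x}\|_\infty=a$. Every entry of $\mathbf{x}$ equals $a$, so every entry of $\mathbf{z}$ satisfies $z_i\geq a-a=0$. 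This proves $\mathbf{z}\geq 0$.

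This last step is the only delicate one. The constant $\frac12$ is exactly what makes the bound $\|\mathbf{z}-\mathbf{x}\|_\infty\le a$ work; one needs $\|E\|_\infty/(1-\|E\|_\infty)\leq 1$. It also matters that every entry of $\mathbf{x}$ equals the same value $a$, so that the uniform deviation bound can be compared entrywise with $a$. If $a=0$, then $\mathbf{d}=0$, so $\mathbf{z}=0$, which is trivially nonnegative.
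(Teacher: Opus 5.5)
Your proof is correct. Factoring $A+B=A(I+A^{-1}B)$ gives invertibility, and the estimate $\|\mathbf{z}-\mathbf{x}\|_\infty\le \frac{\|A^{-1}B\|_\infty}{1-\|A^{-1}B\|_\infty}\|\mathbf{x}\|_\infty\le a$ forces $\mathbf{z}\ge 0$, because every entry of $\mathbf{x}$ equals $a$.

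The paper does not prove this lemma itself; it quotes it from \cite[Corollary 3.3]{BD}, which to my knowledge rests on this same standard perturbation estimate for linear systems. Your route is therefore essentially the intended one.
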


By Lemmas \ref{lem:frac1} and \ref{prop:fromdukes}, to prove that $G$ admits a fractional $K_s$-decomposition, it suffices to show that the matrices $M_\Gamma$ and $\Delta M_{(\Gamma,G)}$ satisfy the conditions of Lemma \ref{prop:fromdukes}, where we set $A=M_\Gamma$ and $B=\Delta M_{(\Gamma,G)}$.
In Section \ref{sec:scheme}, we present an association scheme defined on $E(\Gamma)$. This scheme enables us to compute the spectrum of $M_\Gamma$, thereby establishing the invertibility of $M_\Gamma$ for any $r \geq s+2$, and helps us estimate the infinity norm $\| M_\Gamma^{-1} \Delta M_{(\Gamma,G)}\|_\infty$ in Section \ref{sec:s+2}.

\section{The spectrum of $M_\Gamma$}\label{sec:scheme}

In this section, we determine the spectrum of $M_\Gamma$ by showing that it belongs to a low-dimensional matrix algebra with an explicit description. We start by recalling necessary background on association schemes.

\subsection{Preliminaries on association schemes}

Association schemes form a central part of algebraic combinatorics, and play important roles in
several branches of mathematics, such as coding theory, graph theory and experimental designs (cf. \cite{Bailey,bbit,bi}).

Let $d$ be a nonnegative integer. A symmetric $d$-class \emph{association scheme} $(X, \{R_i\}_{i=0}^d)$ consists of a nonempty finite set $X$ together with $d+1$ nonempty binary relations $R_0, R_1, \ldots, R_d$ such that:
\begin{itemize}
  \item[$(1)$] $R_0=\{(x,x):x\in X\}$ is the identity relation, and the relations $R_0, \dots, R_d$ form a partition of $X \times X$;
  \item[$(2)$] each relation is symmetric, i.e., $(x, y) \in R_i$ implies $(y, x) \in R_i$;
  \item[$(3)$] for any $i,j,k\in\{0,1,\ldots,d\}$ and any pair $(x, y) \in R_k$, the number of elements $z\in X$ such that $(x, z) \in R_i$ and $(z, y) \in R_j$ is a constant $p_{ij}^k$, which depends only on $i, j,$ and $k$, and is independent of the choice of $(x, y)$.
\end{itemize}
The numbers $p_{ij}^k$ are called the \emph{intersection numbers} of the scheme. Two elements $x,y\in X$ are called \emph{$i$-th associates} with $i \in \{0,1, \dots, d\}$ if $(x,y) \in R_i$.

Let $A_i$ denote the adjacency matrix of the relation $R_i$ with $A_i(x,y)=1$ for $(x,y) \in R_i$ and $0$ otherwise, where $A_i(x,y)$ represents the entry in the $x$-th row and $y$-th column of $A_i$. Then $A_0=I$ and $\sum_{i=0}^{d} A_i=J$, where $I$ and $J$ denote the identity matrix and the all-ones matrix, respectively. Let
$\mathcal{A}=\{\sum_{i=0}^{d} a_i A_i : a_0, \dots, a_d \in \mathbb{R}\}.$
By the definition of the intersection numbers $p_{ij}^k$, we have $A_i A_j = \sum_{k=0}^{d} p_{ij}^k A_k$ for all $i,j\in\{0,1,\ldots,d\}$, and the symmetry of each matrix $A_i$ implies $p_{ij}^k=p_{ji}^k$ for all $i,j,k\in\{0,1,\ldots,d\}$. Thus $\mathcal{A}$ is a commutative algebra of dimension $d+1$ generated by $A_0,A_1,\ldots,A_d$, which is called the \emph{Bose-Mesner algebra} of the association scheme.

Since $A_0,A_1,\ldots,A_d$ are pairwise commuting symmetric matrices, they can be simultaneously diagonalized by an orthogonal matrix. Consequently, $\mathbb{R}^X$, the real vector
space whose coordinates are indexed by $X$, decomposes into a direct sum of maximal common eigenspaces of $A_0,A_1,\ldots,A_d$ (cf. \cite[Section 2.3]{bi}):
\begin{align}\label{eq:R_X}
\mathbb{R}^X=U_0\oplus U_1\oplus\cdots\oplus U_d,
\end{align}
where $U_0$ is the $1$-dimensional subspace spanned by the all-ones vector $\mathbf{1}=(1,1,\ldots,1)^\top$. Note that the definition of $p_{ii}^0$ implies
\begin{align}\label{eq:p_ii^0}
A_i \mathbf{1} = p_{ii}^0 \mathbf{1}
\end{align}
for each $i\in\{0,1,\ldots,d\}$.

For $i\in \{0,1,\ldots,d\}$, let $E_i$ denote the orthogonal projector from $\mathbb{R}^X$ onto $U_i$, represented as a symmetric matrix with respect to the standard unit orthonormal basis of $\mathbb{R}^X$ (i.e., the basis consisting of vectors with exactly one entry $1$ and all other entries $0$). Then $E_iE_j = \delta_{ij}E_i$ for all $i,j \in \{0,1,\ldots,d\}$, where $\delta_{ij}=1$ if $i=j$ and $0$ otherwise. Since $E_i^2=E_i$, we have
\begin{align}\label{eq:U_i}
\dim U_i = \operatorname{rank}(E_i)=\operatorname{tr}(E_i) \quad \text{(cf. \cite[Lemma 2.1]{Bailey})}.
\end{align}
Especially, $E_0=J/|X|$ (cf. \cite[Lemma 2.8]{Bailey}).
Furthermore, $E_0,E_1,\ldots,E_d$ are the primitive idempotents of the Bose-Mesner algebra $\mathcal{A}$ and form a basis for $\mathcal{A}$ (cf. \cite[Theorem 2.6]{Bailey} or \cite[Theorem 3.1]{bi}).

For any $M\in \mathcal{A}$, let
\begin{align}\label{eq:ME_i}
 M=\sum_{i=0}^d \mu_i E_i \text{ (note that these }\mu_i\text{ are not necessarily distinct).}
\end{align}
Then $ME_j=\mu_j E_j$ for any $j\in \{0,1,\ldots,d\}$, and thus all columns of $E_j$ are eigenvectors of $M$ associated with the eigenvalue $\mu_j$. Since $E_j^2=E_j$, all columns of $E_j$ lie in $U_j$. By \eqref{eq:U_i}, $\dim U_j = \operatorname{rank}(E_j)$, it follows that for each $j\in \{0,1,\ldots,d\}$,
\begin{align}\label{eq:M}
U_j \text{ is contained in an eigenspace of } M \text{ corresponding to the eigenvalue } \mu_j.
\end{align}
Since $\sum_{i=0}^d E_i=I$ (cf. \cite[Lemma 2.1]{Bailey}), if all $\mu_i$ are nonzero, then $M$ is invertible and
\begin{align}\label{eq:M^-1}
M^{-1} = \sum_{i=0}^d \frac{1}{\mu_i} E_i.
\end{align}

Given that $\{A_0,A_1,\ldots,A_d\}$ and $\{E_0,E_1,\ldots,E_d\}$ are both bases of the Bose-Mesner algebra $\mathcal{A}$, there exist unique $(d+1)\times (d+1)$ matrices $C=(C(i,j))$ and $D=(D(i,j))$ for $i,j\in\{0,1,\ldots,d\}$ with $C=D^{-1}$, such that
\begin{align*}
A_i=\sum_{j=0}^{d} C(i,j) E_j \quad \text{and} \quad E_i=\sum_{j=0}^{d} D(i,j) A_j
\end{align*}
for each $i\in\{0,1,\ldots,d\}$. We refer to $C$ and $D$ as the \emph{first} and \emph{second eigenmatrices} of the scheme, respectively.

Some techniques for computing $C$ and $D$ are introduced in \cite[Section~2.4]{Bailey}. Here we present one such technique, which will be used in Section \ref{subsec:asr}. Let $M=\sum_{k=0}^{d} a_k A_k\in\mathcal{A}$. Suppose that $M$ has exactly $d+1$ distinct eigenvalues $\lambda_0,\lambda_1,\ldots,\lambda_{d}$. Then $M$ has exactly $d+1$ distinct eigenspaces. By \eqref{eq:M}, every eigenspace of $M$ is a direct sum of certain $U_j$ for $j\in\{0,1,\ldots,d\}$. Thus the $d+1$ distinct eigenspaces of $M$ are precisely $U_0,U_1,\ldots,U_{d}$, and we can therefore apply \cite[Formula (2.1)]{Bailey} to obtain
\begin{align}\label{eq:E_i}
E_i = \displaystyle\prod_{\substack{\ell=0\\ \ell\neq i}}^{d} \frac{(M-\lambda_\ell I)}{(\lambda_i-\lambda_\ell)}
\end{align}
for each $i\in\{0,1,\ldots,d\}$. Substituting $M=\sum_{k=0}^{d} a_k A_k$ into \eqref{eq:E_i}, we may express each $E_i$ as a linear combination of $A_0,A_1,\ldots,A_d$. This gives the matrix $D$, and its inverse then yields the matrix $C$.

\subsection{Association schemes from balanced complete $r$-partite graphs}\label{subsec:asr}

This section focuses on the association scheme defined on the edge set of a balanced complete $r$-partite graph, where $r\geq4$.

Let $r \geq 4$ be an integer and let $\Gamma$ be a balanced complete $r$-partite graph on $rn$ vertices. Define the binary relations $R_0, \dots, R_5$ on the edge set $E(\Gamma)$ as follows:
\begin{itemize}
    \item[$R_0$:] two edges are identical (the identity relation);
    \item[$R_1$:] two edges share the same two parts and exactly one vertex;
    \item[$R_2$:] two edges share the same two parts but share no vertices;
    \item[$R_3$:] two edges share exactly one part and one vertex;
    \item[$R_4$:] two edges share exactly one part but no vertices;
    \item[$R_5$:] two edges share no parts.
\end{itemize}
Then $\left( E(\Gamma), \{R_i\}_{i=0}^5 \right)$ is a symmetric $5$-class association scheme, where the explicit values of the intersection numbers $p_{ij}^k$ are listed in Appendix \ref{app:A}. Let $A_i$ be the adjacency matrix of $R_i$, and $\mathcal A = \{\sum_{i=0}^{5} a_i A_i : a_0, \dots, a_5 \in \mathbb{R} \}$
be the Bose-Mesner algebra of the scheme.

Recall from \eqref{eq:M_H} that $M_\Gamma$ is a matrix whose rows and columns are indexed by $E(\Gamma)$, and the $(e,e')$-entry counts the number of copies of $K_s$ in $\Gamma$ that contain both edges $e$ and $e'$. In $\Gamma$, any two edges that are $0$-th associates appear together in exactly $\binom{r-2}{s-2} n^{s-2}$ copies of $K_s$, those that are $3$-rd associates appear in exactly $\binom{r-3}{s-3} n^{s-3}$ copies of $K_s$, and those that are $5$-th associates appear in exactly $\binom{r-4}{s-4} n^{s-4}$ copies of $K_s$; otherwise, no $K_s$ contains both edges. Therefore,
\begin{align}\label{eq:M_gamma}
M_\Gamma = \binom{r-2}{s-2}n^{s-2}A_0 + \binom{r-3}{s-3}n^{s-3}A_3 + \binom{r-4}{s-4}n^{s-4}A_5 \in \mathcal{A}.
\end{align}

Let $E_0,E_1,\ldots,E_5$ be the primitive idempotents of the Bose-Mesner algebra $\mathcal{A}$, where $E_0 = J/|E(\Gamma)|=J/(\binom{r}{2}n^2)$. To determine the eigenvalues of $M_{\Gamma}$, we shall express $M_{\Gamma}$ as a linear combination of $E_i$ for $i\in\{0,1,\ldots,5\}$. Then by \eqref{eq:ME_i} and \eqref{eq:M}, the coefficients of $E_i$ in this linear combination are precisely its eigenvalues. Therefore, by \eqref{eq:M_gamma}, we need to express each of $A_0$, $A_3$, and $A_5$ as a linear combination of $E_i$ for $i\in\{0,1,\ldots,5\}$. To this end, we shall compute the first eigenmatrix of the scheme. Before that, we need the following lemma.

\begin{Lemma}\label{lem:A2+A3}
Let $r\geq 4$. The matrix $2r^2A_0 + r^2 A_1+A_3$ has exactly six distinct eigenvalues:
\begin{table}[h]\centering\renewcommand{\arraystretch}{1.5}
\begin{tabular}{c|c|c|c|c|c}\toprule
$\lambda_0$ & $\lambda_1$ & $\lambda_2$ & $\lambda_3$ & $\lambda_4$ & $\lambda_5$ \\ \hline
$2n(r^2+r-2)$ & $n(2r^2+r-4)$ & $2n(r^2-1)$ & $n(r^2+r-2)$ & $n(r^2-1)$ & $0$ \\
\bottomrule
\end{tabular} .
\end{table}
\end{Lemma}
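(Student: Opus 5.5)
The plan is to write the matrix $M:=2r^2A_0+r^2A_1+A_3$ as a combination of Gram matrices of two incidence structures on $E(\Gamma)$, and then read off its eigenvalues on explicit invariant subspaces.

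\textbf{Factorisation.} Let $N$ be the vertex--edge incidence matrix of $\Gamma$, with rows indexed by $V(\Gamma)$. Let $Q$ be the matrix whose rows are indexed by pairs $(v,j)$, where $v\notin V_j$, and in which an edge $uv$ with $u\in V_i$, $v\in V_j$ has entries $1$ exactly in rows $(u,j)$ and $(v,i)$.

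Two distinct edges share a row of $N$ iff they share a vertex, which is relation $R_1$ or $R_3$. They share a row of $Q$ iff they share a vertex and the same pair of parts, which is relation $R_1$. Hence
\[
N^\top N=2A_0+A_1+A_3,\qquad Q^\top Q=2A_0+A_1,
\]
and therefore
\[
M=(r^2-1)Q^\top Q+N^\top N.
\]
In particular $M$ is positive semidefinite. As a sanity check, the valencies are $p^0_{11}=2(n-1)$ and $p^0_{33}=2(r-2)n$, so $M\mathbf 1=2n(r^2+r-2)\mathbf 1=\lambda_0\mathbf 1$.

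\textbf{Invariant subspaces.} I would split $\mathbb R^{E(\Gamma)}$ into $M$-invariant pieces, roughly as follows.
\begin{itemize}
\item[(a)] Vectors constant on each block $E(V_i,V_j)$. This is a copy of the edge space of $K_r$, on which $M$ acts as an element of the Johnson scheme $J(r,2)$. It has three eigenspaces: the constants (eigenvalue $\lambda_0$), the "vertex-type" space of dimension $r-1$, and the "cycle-type" space of dimension $\binom r2-r$. Computing with block sizes $n^2$ should give $\lambda_3$ and $\lambda_4$ on the last two (or the reverse; this needs checking).
\item[(b)] Vectors of the form $f(uv)=a_j(u)+a_i(v)$, where for each vertex $u$ and each other part $j$ the values $a_j(u)$ are chosen with zero sum over each part. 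These lie in the row space of $Q$ but are orthogonal to the block-constant vectors. This space should split further according to whether $a_j(u)$ depends on $j$. The case $a_j(u)=a(u)$ lies in the row space of $N$; the remaining case is orthogonal to it. On these two pieces one gets $\lambda_1$ and $\lambda_2$, by computing $Q^\top Q$ and $N^\top N$ on each.
\item[(c)] The common kernel of $Q$ and $N$, i.e.\ vectors whose sum over the edges at each $(v,j)$ vanishes. This is nonzero for $n\ge2$ and gives $\lambda_5=0$.
\end{itemize}

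\textbf{Completeness and distinctness.} A dimension count should show that these pieces exhaust $\mathbb R^{E(\Gamma)}$. Alternatively, orthogonality to the row spaces of $Q$ and $N$ forces any remaining vector into (c).

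Distinctness for $r\ge4$ is then immediate:
\begin{itemize}
\item $\lambda_0-\lambda_1=rn$;
\item $\lambda_1-\lambda_2=(r-2)n$;
\item $\lambda_2>\lambda_3$ and $\lambda_3-\lambda_4=(r-1)n$;
\item $\lambda_4>0=\lambda_5$.
\end{itemize}

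\textbf{Main obstacle and fallback.} I expect the main obstacle to be step (b): identifying the correct decomposition of the row space of $Q$ modulo the block-constant vectors, and verifying that $N^\top N$ acts as a scalar on each piece. If that becomes messy, I would instead use the intersection numbers in Appendix~\ref{app:A}. Using $A_iA_j=\sum_k p^k_{ij}A_k$, I would expand $\prod_{\ell=0}^{5}(M-\lambda_\ell I)$ in the basis $A_0,\dots,A_5$ and check that it vanishes. Combined with exhibiting a nonzero eigenvector for each $\lambda_\ell$, this pins down the spectrum.
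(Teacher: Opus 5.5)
\textbf{Verdict: your route is correct and genuinely different from the paper's, but it contains a correctable slip: the eigenvalues you predict on pieces (a) and (b) are swapped.}

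\textbf{Comparison of routes.} The paper never constructs eigenvectors. It passes to the $6\times 6$ intersection matrix $2r^2B_0+r^2B_1+B_3$ through the algebra isomorphism $\mathcal A\cong\mathcal B$, so that $M$ and its image share a minimal polynomial. It then leaves the eigenvalues of that explicit matrix as a routine check. Your identity $M=(r^2-1)Q^\top Q+N^\top N$ is right: sharing a row of $N$ is exactly $R_1\cup R_3$, and sharing a row of $Q$ is exactly $R_1$. This identity gives explicit eigenspaces with dimensions $1,\ r-1,\ r(r-3)/2,\ r(n-1),\ r(r-2)(n-1),\ \binom r2(n-1)^2$. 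So it recovers the multiplicities of Proposition~\ref{prop:spectrum} without the matrix $D$. It also shows where $r\ge 4$ is needed (the cycle-type space must be nonzero) and where $n\ge 2$ is needed (pieces (b) and (c) must be nonzero). The latter is implicit in the paper, since the scheme requires nonempty relations. Completeness also works: the row space of $N$ lies in that of $Q$, $\operatorname{row}(Q)$ is (a)$\oplus$(b), (c) is $\ker Q$, and the dimensions add up to $\binom r2 n^2$.

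\textbf{The swap.} Your assignment of eigenvalues to pieces is reversed, so the predictions in (a) and (b) would fail as stated.
\begin{itemize}
\item On (a): $Q^\top Q$ acts on block-constant vectors as $2n$, so $(r^2-1)Q^\top Q$ already contributes $2n(r^2-1)=\lambda_2$. Since $N^\top N$ is positive semidefinite, no eigenvalue on (a) lies below $\lambda_2>\lambda_3$. Concretely, $A_1$ acts on (a) as $2(n-1)$, and $A_3$ acts as $n$ times the adjacency matrix of the triangular graph $T(r)$, whose eigenvalues are $2(r-2)$, $r-4$, $-2$. So $M$ has eigenvalues $2r^2n+2(r-2)n=\lambda_0$, $2r^2n+(r-4)n=\lambda_1$ on the vertex-type space, and $2r^2n-2n=\lambda_2$ on the cycle-type space.
\item On (b): the zero-sum condition gives $QQ^\top a=na$, so $Q^\top Q$ acts as $n$. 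Since $(Nf)(u)=n\sum_j a_j(u)$, the operator $N^\top N$ acts as $(r-1)n$ on the $j$-independent part and as $0$ on the complement, where $\sum_j a_j(u)=0$. This yields $\lambda_3=n(r^2+r-2)$ and $\lambda_4=n(r^2-1)$, not $\lambda_1,\lambda_2$.
\end{itemize}
The six values are unchanged, so the argument survives, but you must correct the labels before writing it up.

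\textbf{On your fallback.} Expanding $\prod_\ell (M-\lambda_\ell I)$ in the $A_i$ basis is the paper's computation done less efficiently; the $6\times 6$ intersection matrix is the cleaner way to carry it out.
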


\begin{proof}
For $0 \leq k \leq 5$, define the $6 \times 6$ matrix $B_k$ by $B_k(i,j) = p_{jk}^i = p_{kj}^i$, where $0 \leq i, j \leq 5$. Let $\mathcal{B} = \{ \sum_{k=0}^{5} a_k B_k : a_0,\dots,a_5 \in \mathbb{R}\}$. Define the mapping $f : \mathcal{A} \to \mathcal{B}$ by $f (\sum_{k=0}^{5} a_k A_k) = \sum_{k=0}^{5} a_k B_k$. By \cite[Theorem 2.16]{Bailey}, $\mathcal{B}$ forms an algebra and $f$ is an algebra isomorphism from $\mathcal{A}$ to $\mathcal{B}$. In particular, $f$ maps the identity matrix $A_0 \in \mathcal{A}$ to the identity matrix $B_0 \in \mathcal{B}$. Consequently, every matrix $M \in \mathcal{A}$ and its image $f(M) \in \mathcal{B}$ have the same minimal polynomial, and hence they share the same set of distinct eigenvalues.

Since $2r^2B_0 + r^2 B_1+B_3=$
$$\setlength{\arraycolsep}{2.6pt}
\begin{pmatrix}
2r^2 & 2r^2(n - 1) & 0 & 2(r - 2)n & 0 & 0 \\
r^2 & r^2n  & r^2(n - 1) & (r - 2)n & (r - 2)n & 0 \\
0 & 2r^2 & 2r^2(n - 1) & 0 & 2(r - 2)n & 0 \\
1 & n - 1 & 0 & (r^2+r-3)n+1+r^2 & (r^2+1)(n-1) & (r - 3)n \\
0 & 1 & n - 1 & r^2 + 1 & (2r^2+r-2)n-1-r^2 & (r - 3)n \\
0 & 0 & 0 & 4 & 4(n - 1)  & 2(r^2+r-4)n
\end{pmatrix},
$$
it is readily checked that $2r^2B_0 + r^2 B_1+B_3$ has six distinct eigenvalues as given in the statement of this lemma, and thus so does $2r^2A_0 + r^2 A_1+A_3$.
\end{proof}

\begin{Lemma}\label{lem:CD}
Let $C=(C(i,j))$ and $D=(D(i,j))$ be the first and second eigenmatrices of the scheme $\left( E(\Gamma), \{R_\ell\}_{\ell=0}^5 \right)$, respectively, where $i,j\in\{0,1,\ldots,5\}$. Then for each $i \in \{0,1, \dots, 5\}$,
\begin{align*}
A_i=\sum_{j=0}^{5} C(i,j) E_j \quad \text{and} \quad E_i=\sum_{j=0}^{5} D(i,j) A_j,
\end{align*}
where
$$ C=
\begin{pmatrix}
1 & 1 & 1 & 1 & 1 & 1 \\
2(n-1) & 2(n-1) & 2(n-1) & n-2 & n-2 & -2 \\
(n-1)^2 & (n-1)^2 & (n-1)^2 & 1-n & 1-n & 1 \\
2(r-2)n & (r-4)n & -2n & (r-2)n & -n & 0 \\
2(r-2)(n-1)n & (r-4)(n-1)n & 2(1-n)n & (2-r)n & n & 0 \\
\binom{r-2}{2}n^2 & (3-r)n^2 & n^2 & 0 & 0 & 0 \\
\end{pmatrix}
$$
and
{\renewcommand{\arraystretch}{1.2}
$$
D= \frac{1}{\binom{r}{2}n^2}
\begin{pmatrix}
1 & 1 & 1 & 1 & 1 & 1 \\
r-1 & r-1 & r-1 & \frac{(r-4)(r-1)}{2(r-2)} & \frac{(r-4)(r-1)}{2(r-2)} & \frac{2(1-r)}{r-2} \\
\frac{r(r-3)}{2} & \frac{r(r-3)}{2} & \frac{r(r-3)}{2} & \frac{r(3-r)}{2(r-2)} & \frac{r(3-r)}{2(r-2)} & \frac{r}{r-2} \\
r(n-1) & \frac{r(n-2)}{2} & -r & \frac{r(n-1)}{2} & -\frac{r}{2} & 0 \\
r(r-2)(n-1) & \frac{r(r-2)(n-2)}{2} & r(2-r) & \frac{r(1-n)}{2} & \frac{r}{2} & 0 \\
\binom{r}{2}(n-1)^2 & \binom{r}{2}(1-n) & \binom{r}{2} & 0 & 0 & 0 \\
\end{pmatrix}.
$$}
\end{Lemma}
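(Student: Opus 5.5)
We will obtain $D$ from the recipe in \eqref{eq:E_i} applied to $M = 2r^2A_0 + r^2A_1 + A_3$. By Lemma \ref{lem:A2+A3}, $M$ has exactly six distinct eigenvalues $\lambda_0,\dots,\lambda_5$, so its eigenspaces are precisely $U_0,\dots,U_5$. We label the idempotents so that $E_i$ is the projector onto the $\lambda_i$-eigenspace of $M$; in particular $\lambda_0 = 2n(r^2+r-2)$ is the row sum of $M$, so $U_0$ is spanned by $\mathbf{1}$ and $E_0 = J/|E(\Gamma)|$, consistent with the first row of $D$.

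Rather than expanding the degree-five products in \eqref{eq:E_i} inside $\mathcal{A}$, we will verify the claimed $C$ and $D$ directly. The plan is to check three things.

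\begin{enumerate}
\item[(a)] $CD = I$ as $6\times 6$ matrices, which is a finite symbolic computation in $r,n$.
\item[(b)] The matrices $F_i := \sum_j D(i,j)A_j$ satisfy $M F_i = \lambda_i F_i$ for each $i$.
\item[(c)] $\sum_i F_i = I$.
\end{enumerate}

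Assume (b) holds and each $F_i$ is nonzero. Then every column of $F_i$ lies in the $\lambda_i$-eigenspace $U_i$. Since $F_i \in \mathcal{A}$, we can write $F_i = \sum_k c_k E_k$, and $MF_i = \lambda_i F_i$ forces $c_k(\lambda_k - \lambda_i) = 0$, so $c_k = 0$ for $k \neq i$ and $F_i = c_i E_i$. Condition (c) then gives $\sum_i c_iE_i = I = \sum_i E_i$, hence every $c_i = 1$ and $F_i = E_i$. This identifies $D$, and (a) gives $C = D^{-1}$, i.e.\ $A_i = \sum_j C(i,j)E_j$.

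Condition (c) amounts to the column sums of $D$ being $1$ in column $0$ and $0$ in the other columns. This is a quick check, e.g.\ column $5$ gives $1 - \tfrac{2(r-1)}{r-2} + \tfrac{r}{r-2} + 0 + 0 + 0 = 0$.

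To check (b) without working with huge matrices, we transfer everything to the $6\times 6$ algebra $\mathcal{B}$ via the isomorphism $f$ from the proof of Lemma \ref{lem:A2+A3}. Under $f$, $M$ maps to the displayed matrix $f(M)$ and $A_j$ maps to $B_j$. Since $B_j e_0 = e_j$, where $e_0$ is the first standard basis vector (because $p_{0j}^i = \delta_{ij}$), an element $X = \sum_j x_j A_j$ corresponds to the vector $f(X)e_0 = (x_0,\dots,x_5)^\top$. Moreover, $f$ is injective and $\mathcal{B}$ acts faithfully on the column space through $e_0$, so $MF_i = \lambda_i F_i$ is equivalent to
\[
f(M)\,(D(i,0),\dots,D(i,5))^\top = \lambda_i\,(D(i,0),\dots,D(i,5))^\top.
\]
So (b) reduces to checking that each row of $D$, read as a column vector, is an eigenvector of the displayed $6\times6$ matrix with eigenvalue $\lambda_i$. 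Equivalently, we may check directly that $\sum_j C(i,j)\cdot(\text{the $j$-th eigenvector})$ reproduces $e_i$.

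An alternative cross-check for $C$ is that $C(i,j)$ is the eigenvalue of $A_i$ on $U_j$. This can be confirmed by computing $f(A_i) = B_i$ acting on the eigenvector of $f(M)$ for $\lambda_j$. Row $0$ of $C$ is all ones, and column $0$ of $C$ consists of the valencies $p_{ii}^0$, which follow immediately from counting: $2(n-1)$, $(n-1)^2$, $2(r-2)n$, $2(r-2)n(n-1)$ and $\binom{r-2}{2}n^2$.

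The main obstacle is the bookkeeping: the symbolic verification of the six eigenvector equations in (b) relies on the intersection numbers from Appendix \ref{app:A}, and arithmetic slips are easy to make. We would organise this by first finding the eigenvectors of $f(M)$ for the simpler eigenvalues $0$ and $n(r^2-1)$, where the last row of $f(M)$ forces zeros in the eigenvector, and then handling $\lambda_1$ and $\lambda_2$ using the block structure visible in the displayed matrix.
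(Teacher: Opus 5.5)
Your proposal is correct, but it certifies the stated matrices rather than deriving them as the paper does.

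\textbf{The paper's route.} It feeds $M=2r^2A_0+r^2A_1+A_3$ into the product formula \eqref{eq:E_i} and expands each five-fold product inside $\mathcal{A}$ using the intersection numbers. This gives $E_i=\sum_j D(i,j)A_j$, and inverting $D$ then yields $C$; the paper omits all of this computation.

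\textbf{Your route.} You replace that expansion by linear checks, and each step holds up.
\begin{enumerate}
\item[(i)] The column sums of $D$ give $\sum_i F_i=I$.
\item[(ii)] The eigen-equations $f(M)\,(D(i,0),\dots,D(i,5))^\top=\lambda_i\,(D(i,0),\dots,D(i,5))^\top$ are a legitimate substitute for $MF_i=\lambda_iF_i$. This is because $X\mapsto f(X)e_0$ is exactly the coordinate map of $\mathcal{A}$ in the basis $A_0,\dots,A_5$, and $f(M)f(F_i)e_0$ is the coordinate vector of $MF_i$.
\item[(iii)] The deduction $F_i=E_i$ from $c_k(\lambda_k-\lambda_i)=0$ and $\sum_i c_iE_i=\sum_i E_i$ is sound. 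You do not even need the hypothesis $F_i\neq O$.
\item[(iv)] Finally, $CD=I$ yields the expansion of each $A_i$ in terms of the $E_j$.
\end{enumerate}

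\textbf{Comparison.} The benefit of your approach is that every step can be redone with a handful of $6\times 6$ matrix--vector products against the matrix already displayed in Lemma~\ref{lem:A2+A3}. For instance, rows $4$ and $5$ of $D$ do check out as eigenvectors for $n(r^2-1)$ and $0$. Moreover, (b) together with the invertibility of $D$ would re-prove Lemma~\ref{lem:A2+A3} itself: it exhibits six independent eigenvectors of multiplication by $M$ on the six-dimensional algebra $\mathcal{A}$, with six distinct eigenvalues. The cost is that your route presupposes the answer, whereas the paper's procedure produces it.

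One small slip: checking that $\sum_j C(i,j)$ times the $j$-th eigenvector equals $e_i$ is a restatement of (a), not an equivalent form of (b).
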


\begin{proof}
By Lemma \ref{lem:A2+A3}, the matrix $2r^2A_0 + r^2 A_1+A_3$ has exactly six distinct eigenvalues $\lambda_0,\lambda_1,\ldots,\lambda_{5}$. It follows from \eqref{eq:E_i} that
$$
E_i = \displaystyle\prod_{\substack{\ell=0\\ \ell\neq i}}^{5} \frac{(2r^2A_0 + r^2 A_1+A_3-\lambda_\ell I)}{(\lambda_i-\lambda_\ell)},\quad i=0,\dots,5.
$$
Expanding $E_i$ via the equations $A_i A_j = \sum_{k=0}^{5} p_{ij}^k A_k$, we may express each $E_i$ as a linear combination of $A_0,A_1,\ldots,A_5$. This gives the matrix $D$, and its inverse then yields the matrix $C$. We here omit the tedious computation details. Readers may verify this for themselves.
\end{proof}

\begin{Proposition}\label{prop:spectrum}
Let $3\leq s<r$. The eigenvalues of $M_\Gamma$, their multiplicities, and the corresponding eigenspaces are shown in the following table.
\begin{table}[H]
\centering
\renewcommand{\arraystretch}{1.4}
\begin{tabular}{c|c|c}\toprule
Eigenvalue & Multiplicity & Eigenspace \\\hline
 $\lambda_0=\frac{s(s-1)}{2} \binom{r-2}{s-2} n^{s-2}$    & $1$ & $U_0=\langle \mathbf{1} \rangle$ \\
 $\lambda_1=\frac{(r-s)(s-1)}{r-2} \binom{r-2}{s-2}   n^{s-2}$    &  $r-1$ & $U_1$ \\
 $\lambda_2=\frac{(r-s-1)(r-s)}{(r-2)(r-3)} \binom{r-2}{s-2} n^{s-2}$    &  $\frac{r(r-3)}{2}$ & $U_2$  \\
 $\lambda_3=(s-1) \binom{r-2}{s-2}  n^{s-2}$    &  $r(n-1)$ & $U_3$ \\
 $\lambda_4=\frac{r-s}{r-2} \binom{r-2}{s-2} n^{s-2}$  &  $r(r-2)(n-1)$ & $U_4$  \\
 $\lambda_5=\binom{r-2}{s-2}n^{s-2}$    &  $\binom{r}{2}(n-1)^2$  & $U_5$ \\
\bottomrule
\end{tabular}
\end{table}
\end{Proposition}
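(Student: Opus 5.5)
The plan is to write $M_\Gamma$ in the idempotent basis using \eqref{eq:M_gamma} and the first eigenmatrix $C$ of Lemma \ref{lem:CD}. Since $A_k=\sum_{j} C(k,j)E_j$, we have
$$M_\Gamma=\sum_{j=0}^{5}\mu_j E_j,\qquad \mu_j=\binom{r-2}{s-2}n^{s-2}C(0,j)+\binom{r-3}{s-3}n^{s-3}C(3,j)+\binom{r-4}{s-4}n^{s-4}C(5,j).$$
By \eqref{eq:ME_i} and \eqref{eq:M}, each $U_j$ then lies in the eigenspace of $M_\Gamma$ for $\mu_j$. It remains to simplify each $\mu_j$ to the stated $\lambda_j$ and to compute $\dim U_j$.

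For the simplification, factor out $\binom{r-2}{s-2}n^{s-2}$ using
$$\binom{r-3}{s-3}=\frac{s-2}{r-2}\binom{r-2}{s-2},\qquad \binom{r-4}{s-4}=\frac{(s-2)(s-3)}{(r-2)(r-3)}\binom{r-2}{s-2}.$$
Reading off columns of $C$ (rows $0,3,5$):
\begin{itemize}
\item Column 0 gives $1+\frac{2(s-2)(r-2)}{r-2}+\frac{(s-2)(s-3)}{(r-2)(r-3)}\binom{r-2}{2}=1+2(s-2)+\frac{(s-2)(s-3)}{2}=\frac{s(s-1)}{2}$.
\item Column 1 gives $1+\frac{(s-2)(r-4)}{r-2}-\frac{(s-2)(s-3)}{r-2}=\frac{(r-s)(s-1)}{r-2}$.
\item Column 2 gives $1-\frac{2(s-2)}{r-2}+\frac{(s-2)(s-3)}{(r-2)(r-3)}$, which factors as $\frac{(r-s)(r-s-1)}{(r-2)(r-3)}$.
\item Column 3 gives $1+(s-2)=s-1$.
\item Column 4 gives $1-\frac{s-2}{r-2}=\frac{r-s}{r-2}$.
\item Column 5 gives $1$.
\end{itemize}
Each of these is a short polynomial identity to verify. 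The case $s=3$, where the $A_5$ coefficient vanishes, is covered by the same formulas.

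For the multiplicities, use \eqref{eq:U_i}: $\dim U_i=\operatorname{tr}(E_i)=\sum_j D(i,j)\operatorname{tr}(A_j)$. Since $\operatorname{tr}A_j=0$ for $j\ge1$ and $\operatorname{tr}A_0=|E(\Gamma)|=\binom r2 n^2$, this gives $\dim U_i=\binom r2 n^2 D(i,0)$, which is the first column of $D$ times $\binom r2n^2$. That column reads $1,\ r-1,\ \frac{r(r-3)}2,\ r(n-1),\ r(r-2)(n-1),\ \binom r2(n-1)^2$, exactly as claimed. Also $U_0=\langle\mathbf 1\rangle$ by construction.

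The one step needing care is the claim that the $U_j$ are exactly the eigenspaces, i.e.\ that the $\lambda_j$ are pairwise distinct, so that the eigenspace for $\lambda_j$ is $U_j$ and not a larger direct sum. Since $r>s\ge3$, all six values are positive, and the plan is to compare the normalized quantities $\frac{s(s-1)}2$, $\frac{(r-s)(s-1)}{r-2}$, $\frac{(r-s)(r-s-1)}{(r-2)(r-3)}$, $s-1$, $\frac{r-s}{r-2}$ and $1$ directly. Coincidences can occur: for example, when $r=s+1$ we get $\lambda_2=0$, and in degenerate parameter ranges two of the values may be equal. Where they coincide, I would read the table as asserting that $U_j$ lies in the eigenspace of $\lambda_j$ with the listed multiplicity, interpreted as $\dim U_j$; this is what \eqref{eq:M} gives and what the later argument uses. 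Apart from this point, the main work is the routine algebra of column 2.
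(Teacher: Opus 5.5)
Your proposal is correct and follows the paper's own proof: expand $A_0,A_3,A_5$ in \eqref{eq:M_gamma} using rows $0,3,5$ of $C$, read off the coefficients of the $E_j$ as eigenvalues via \eqref{eq:ME_i} and \eqref{eq:M}, and get the multiplicities from $\operatorname{tr}(E_i)=D(i,0)\binom{r}{2}n^2$. Your column simplifications all check out. Your caveat on distinctness goes beyond the paper, which says nothing about it, and it is well placed. The values are ordered $\lambda_2<\lambda_4<\lambda_5\le\lambda_1<\lambda_3<\lambda_0$, and the only coincidence is $\lambda_1=\lambda_5$, which happens exactly when $r=s+1$; there the eigenspace is $U_1\oplus U_5$ (your example $\lambda_2=0$ is not itself a coincidence). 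So your reading of the table as ``$U_j$ lies in the $\lambda_j$-eigenspace and $\dim U_j$ is as listed'' is precisely what the paper proves and later uses.
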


\begin{proof}
By \eqref{eq:M_gamma}, $M_{\Gamma} = \binom{r-2}{s-2}n^{s-2}A_0 + \binom{r-3}{s-3}n^{s-3}A_3 + \binom{r-4}{s-4}n^{s-4}A_5$. Applying Lemma \ref{lem:CD}, we may express $A_0,A_3,A_5$ as linear combinations of $E_0,E_1,\ldots,E_5$. Substituting into $M_\Gamma$, we obtain
\begin{align}\label{eq:M_Gamma E}
M_\Gamma = \binom{r-2}{s-2} n^{s-2} \Big( & \frac{s(s-1)}{2} E_0 + \frac{(r-s)(s-1)}{r-2} E_1 + \frac{(r-s-1)(r-s)}{(r-2)(r-3)} E_2 \notag \\
& + (s-1) E_3 + \frac{r-s}{r-2} E_4 + E_5 \Big).
\end{align}
By \eqref{eq:ME_i} and \eqref{eq:M}, the coefficients of $E_i$ are the eigenvalues of $M_\Gamma$. Furthermore, by \eqref{eq:U_i},
$$\dim U_i = \operatorname{tr}(E_i) = \operatorname{tr}(\sum_{j=0}^{5} D(i,j) A_j)= D(i,0) \operatorname{tr}(A_0)=D(i,0) |E(\Gamma)|= D(i,0) \binom{r}{2}n^2.$$
Therefore, multiplying the first column of $D$ in Lemma \ref{lem:CD} by $\binom{r}{2}n^2$ gives the multiplicity of each eigenvalue.
\end{proof}

\section{Proof of Theorem~\ref{thm:main1}}\label{sec:s+2}

This section establishes the minimum degree threshold for a balanced $r$-partite graph $G$ to admit a fractional $K_s$-decomposition when $r\geq s+2$. By Lemma \ref{lem:frac1}, it suffices to show that the linear system $(M_\Gamma + \Delta M_{(\Gamma,G)})\mathbf{z} = \mathbf{1}$ admits a nonnegative solution $\mathbf{z} \geq 0$. This can be established by applying Lemma \ref{prop:fromdukes}. Prior to that, we need to compute the infinity norms of the matrices $M_{\Gamma}^{-1}$ and $\Delta M_{(\Gamma,G)}$.

\begin{Lemma}\label{prop:norm1}
Let $s\geq3$ and $r \geq s+2$. Then
$$
\| M_\Gamma^{-1} \|_\infty = \frac{2 \left( r^2(2s^2-4s+1)-r(12s^2-26s+9)+(17s^2-39s+16) \right) }{s(s-1)(r-2)(r-s-1)\binom{r-3}{s-2}n^{s-2}}.
$$
\end{Lemma}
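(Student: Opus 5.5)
Since $r\ge s+2$, every eigenvalue $\lambda_0,\dots,\lambda_5$ in Proposition~\ref{prop:spectrum} is nonzero; in particular $\lambda_2\neq 0$ because $r-s-1\ge 1$. Hence by \eqref{eq:M^-1}
$$M_\Gamma^{-1}=\sum_{i=0}^5 \frac{1}{\lambda_i}E_i=\sum_{j=0}^5\Big(\sum_{i=0}^5\frac{D(i,j)}{\lambda_i}\Big)A_j .$$
The plan is to write $M_\Gamma^{-1}=\sum_j \beta_j A_j$ with $\beta_j=\sum_i D(i,j)/\lambda_i$, using the second eigenmatrix $D$ from Lemma~\ref{lem:CD}.

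The first step is structural. Each $A_j$ is a $0/1$ matrix, and the $A_j$ have disjoint supports. Every row of $A_j$ has exactly $p_{jj}^0$ ones, namely the $j$-th column of $C$ evaluated at row index $0$: $1,\ 2(n-1),\ (n-1)^2,\ 2(r-2)n,\ 2(r-2)n(n-1),\ \binom{r-2}{2}n^2$. It follows that all rows have the same absolute sum, and
$$\|M_\Gamma^{-1}\|_\infty=\sum_{j=0}^5 |\beta_j|\,p_{jj}^0 .$$

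The second step is to compute each $\beta_j$ explicitly. Factor out $\binom{r-2}{s-2}n^{s-2}$ from the $\lambda_i$ and $\binom{r}{2}n^2$ from $D$. For each $j$, combine the six terms over a common denominator, which involves $s(s-1)$, $(r-s)(s-1)$, $(r-s-1)(r-s)$, $(s-1)$, $(r-s)$ and $1$.

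Several simplifications reduce the work. For $j=5$, only $i\le 2$ contribute since $D(3,5)=D(4,5)=D(5,5)=0$. For $j=3,4$, the $i\le 2$ parts are equal, so they are simpler. A useful sanity check is that $M_\Gamma^{-1}$ maps $\mathbf 1$ to $\mathbf 1/\lambda_0$, which gives
$$\sum_j \beta_j p_{jj}^0=\frac{1}{\lambda_0}.$$
This identity is exactly the value the norm would take if all $\beta_j$ were nonnegative.

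The third step is to determine the sign of each $\beta_j$ for $r\ge s+2$, $s\ge3$. I expect $\beta_0>0$, together with some negative coefficients, likely $\beta_1$, $\beta_3$ or $\beta_4$; the $n$-dependence should cancel after multiplying by $p_{jj}^0$. Then
$$\|M_\Gamma^{-1}\|_\infty=\frac{1}{\lambda_0}+2\sum_{\beta_j<0}|\beta_j|p_{jj}^0 ,$$
where the factor $2$ comes from the previous identity. Since this must be independent of $n$ apart from the factor $n^{-(s-2)}$, the signs should not depend on $n$. Finally, simplify the result using $\binom{r-2}{s-2}(r-s)(r-s-1)/((r-2)(r-3))=\binom{r-4}{s-2}$, which should produce the denominator $(r-2)(r-s-1)\binom{r-3}{s-2}$ and the quadratic-in-$r$ numerator stated in the lemma.

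The main obstacle is the sign analysis. Each $\beta_j$ is a rational function in $r$ and $s$, with possible $n$ terms, and it must be shown to have constant sign on the whole range $r\ge s+2$. I would first factor each $\beta_j p_{jj}^0$ as far as possible. I would then check signs via the substitution $r=s+1+t$ with $t\ge1$, expecting polynomials in $s$ and $t$ with all coefficients of one sign.
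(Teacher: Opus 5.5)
Your proposal follows the paper's proof. You expand $M_\Gamma^{-1}=\sum_j\beta_jA_j$ with $\beta_j=\sum_i D(i,j)/\lambda_i$, and use the disjoint supports and constant row sums $p_{jj}^0$ to get $\|M_\Gamma^{-1}\|_\infty=\sum_j|\beta_j|p_{jj}^0$. One small slip: these row sums are the entries $C(j,0)$ of the $0$-th column of $C$, not of the $j$-th column.

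The sign analysis you flag, which the paper leaves implicit, does go through for all $n\geq 1$, $s\geq 3$, $r\geq s+2$. Exactly $\beta_3$ and $\beta_4$ are negative. The $n$-dependence cancels only in the combined contribution $|\beta_3|p_{33}^0+|\beta_4|p_{44}^0$, not term by term. With that, your identity $\|M_\Gamma^{-1}\|_\infty=1/\lambda_0+2\sum_{\beta_j<0}|\beta_j|p_{jj}^0$ yields the stated formula.
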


\begin{proof}
By Proposition \ref{prop:spectrum}, $M_{\Gamma}$ has eigenvalues $\lambda_0,\lambda_1,\ldots,\lambda_5$, which are all positive for $s\geq 3$ and $r \geq s+2$. It follows from \eqref{eq:M^-1} and Lemma~\ref{lem:CD} that
$$ M_\Gamma^{-1} =  \sum_{i=0}^5 \frac{1}{\lambda_i} E_i
= \sum_{i=0}^5 \frac{1}{\lambda_i} \left( \sum_{j=0}^5  D(i,j)  A_j \right)
= \sum_{j=0}^5 \left( \sum_{i=0}^5 \frac{1}{\lambda_i} D(i,j) \right)  A_j.$$
By \eqref{eq:p_ii^0}, for each $j\in\{0,1,\ldots,5\}$, all row sums of $A_j$ are equal to $p_{jj}^0$. Since each $A_j$ is a 0-1 matrix and no two matrices $A_j$ and $A_{k}$ ($j\neq k$) share a common position with value $1$, we have
$$
\begin{aligned}
\|M_\Gamma^{-1} \|_\infty &= \sum_{j=0}^5 \left| \sum_{i=0}^5 \dfrac{1}{\lambda_i} D(i,j)\right| p_{jj}^0\\
&= \frac{2 \left( r^2(2s^2-4s+1)-r(12s^2-26s+9)+(17s^2-39s+16) \right) }{s(s-1)(r-2)(r-s-1)\binom{r-3}{s-2}n^{s-2}},
\end{aligned}
$$
where the values $p_{jj}^0$ come from Appendix \ref{app:A}.
\end{proof}

\begin{Lemma}\label{prop:norm2}
Let $s\geq3$ and $r \geq s+1$. If $\hat\delta(G) \geq (1-c)n$ for some $0\leq c\leq 1$, then
$$\| \Delta M_{(\Gamma,G)} \|_\infty \leq  \dfrac{cs(s-1)(s+1)(r-2)}{4} \binom{r-3}{s-3}  n^{s-2}.$$
\end{Lemma}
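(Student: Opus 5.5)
Since only the rows indexed by $E(G)$ of $\Delta M_{(\Gamma,G)}$ are nonzero, we fix an edge $e=uv\in E(G)$ with $u\in V_i$, $v\in V_j$, and bound the absolute row sum of row $e$. The row has two blocks. For $e'\in E(G)$ the entry is $M_G(e,e')-M_\Gamma(e,e')\le 0$, and its absolute value is the number of copies of $K_s$ in $\Gamma$ containing $e,e'$ but not lying in $G$. For $e'\in\overline{E(G)}$ the entry is $-M_\Gamma(e,e')$, and its absolute value is the number of copies of $K_s$ in $\Gamma$ containing $e$ and $e'$; such a copy automatically fails to lie in $G$. Summing over all $e'\ne e$ and adding the diagonal term, the absolute row sum equals
\[
\sum_{K\ni e,\ K\not\subseteq G}\bigl|\{e'\in E(K)\}\bigr| ,
\]
where the diagonal contributes one (the edge $e$ itself) and each other edge of $K$ contributes one. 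So row $e$ has absolute sum exactly $\binom{s}{2}\cdot N_e$, where $N_e$ is the number of copies of $K_s$ in $\Gamma$ containing $e$ that use at least one non-edge of $G$.

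The next step is to bound $N_e$. A copy $K\supseteq e$ is determined by choosing $s-2$ further parts among the other $r-2$ and one vertex in each. It misses $G$ only if some pair of its vertices is a non-edge, so we split by the type of that pair. The first type is a non-edge between $u$ (or $v$) and a new vertex $w$. Since $\hat\delta(G)\ge(1-c)n$, each of $u,v$ has at most $cn$ non-neighbours in each other part. Choosing the part of $w$ ($r-2$ ways), $w$ ($\le 2cn$ choices for the pair from $u$ or $v$), and the remaining $s-3$ parts and vertices gives at most
\[
2(r-2)\,cn\binom{r-3}{s-3}n^{s-3}.
\]
The second type is a non-edge $ww'$ between two new vertices. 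Choose the unordered pair of parts for $w,w'$ ($\binom{r-2}{2}$ ways), $w$ ($n$ ways), $w'$ as a non-neighbour of $w$ ($\le cn$ ways), and the remaining $s-4$ parts and vertices ($\binom{r-4}{s-4}n^{s-4}$). Using $\binom{r-2}{2}\binom{r-4}{s-4}=\frac{(r-2)(s-3)}{2}\binom{r-3}{s-3}$, this gives at most
\[
\frac{(r-2)(s-3)}{2}\binom{r-3}{s-3}\,c\,n^{s-2}.
\]

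Adding the two bounds,
\[
N_e\le c(r-2)\binom{r-3}{s-3}n^{s-2}\Bigl(2+\frac{s-3}{2}\Bigr)=\frac{c(r-2)(s+1)}{2}\binom{r-3}{s-3}n^{s-2}.
\]
Multiplying by $\binom{s}{2}=\frac{s(s-1)}{2}$ yields exactly the claimed bound $\frac{cs(s-1)(s+1)(r-2)}{4}\binom{r-3}{s-3}n^{s-2}$. The zero rows are trivially fine. The main care needed is the first step: the contribution of $\overline{E(G)}$ columns, together with the sign of the $E(G)$ block, must be accounted for so that each bad $K_s$ counts exactly $\binom{s}{2}$ times, including the diagonal. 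The union bound over bad pairs then matches the constant precisely. For $s=3$ the second type does not occur, which is consistent with the factor $s-3$.
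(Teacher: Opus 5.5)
Your proposal is correct and follows essentially the same route as the paper: the absolute row sum of $\Delta M_{(\Gamma,G)}$ at $e\in E(G)$ equals $\binom{s}{2}$ times the number of copies of $K_s$ in $\Gamma$ through $e$ that are not contained in $G$. The paper then bounds this number with the same union bound over the missing edge. Your missing edges sharing a vertex with $e$ are its $3$-rd associates, counted as at most $2(r-2)cn$ edges, each in $\binom{r-3}{s-3}n^{s-3}$ copies. Your missing edges between two new parts are its $5$-th associates, counted as at most $\binom{r-2}{2}cn^2$ edges, each in $\binom{r-4}{s-4}n^{s-4}$ copies.
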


\begin{proof}
Recall from \eqref{eq:Delta_M} that
$$-\Delta M_{(\Gamma,G)} =
\left(
\begin{array}{c:c}
 M_\Gamma[E(G),E(G)]-M_G &  M_\Gamma[E(G),\overline{E(G)}] \\
\hdashline
\\[-2ex]\multicolumn{2}{c}{ O }
\end{array}\right).
$$
Then $\| \Delta M_{(\Gamma,G)} \|_\infty = \|-\Delta M_{(\Gamma,G)} \|_\infty$.
For $e\in E(G)$ and $e' \in E(\Gamma)$, the $(e,e')$-entry of the matrix $-\Delta M_{(\Gamma,G)}$ counts the number of copies of $K_s$ in $\Gamma$ that contain both edges $e$ and $e'$, and that are not present in $G$. Thus, given $e\in E(G)$, the row sum of $-\Delta M_{(\Gamma,G)}$ indexed by $e$ is
\begin{align}\label{eq:Delta,G}
&\sum_{e'\in E(\Gamma)}\sum_{\substack{K\in\mathcal{K}_s(\Gamma)\\ \text{$K$ is not a subgraph of $G$} }} 1_{\{e,e'\}\subseteq E(K)} \notag \\
=&\sum_{\substack{K\in\mathcal{K}_s(\Gamma)\\
\text{$K$ is not a subgraph of $G$} }} \sum_{e'\in E(\Gamma)} 1_{\{e,e'\}\subseteq E(K)}
= \sum_{\substack{K\in\mathcal{K}_s(\Gamma),\ e\in E(K)\\ \text{$K$ is not a subgraph of $G$} }} \binom{s}{2},
\end{align}
where $1_{\{e,e'\}\subseteq E(K)}=1$ if $e$ and $e'$ are both edges of $K$ (allowing $e=e'$), and $0$ otherwise. Under the summation sign, $K$ is not a subgraph of $G$, and hence such a $K$ contains an edge not in $G$, denoted $e''$. If $e$ and $e''$ are $3$-rd associates, then there are $\binom{r-3}{s-3}n^{s-3}$ copies of $K_s$ in $\Gamma$ that contain both $e$ and $e''$; if $e$ and $e''$ are $5$-th associates, then there are $\binom{r-4}{s-4}n^{s-4}$ copies of $K_s$ in $\Gamma$ that contain both $e$ and $e''$; otherwise, there is no copy of $K_s$ in $\Gamma$ that contains both $e$ and $e''$. Since $\hat\delta(G)\geq (1-c)n$, the number of edges in $\Gamma$ that are not in $G$ and are $3$-rd associates of $e$ is at most $2(r-2)cn$; the number of edges in $\Gamma$ that are not in $G$ and are $5$-th associates of $e$ is at most $\binom{r-2}{2}cn^2$. Therefore
$$
\begin{aligned}
\sum_{\substack{K\in\mathcal{K}_s(\Gamma),\ e\in E(K)\\ \text{$K$ is not a subgraph of $G$} }} \binom{s}{2}
& \leq \binom{s}{2} \left( 2(r-2)cn\binom{r-3}{s-3}n^{s-3} + \binom{r-2}{2}cn^2\binom{r-4}{s-4}n^{s-4} \right) \\
& = \frac{cs(s-1)(s+1)(r-2)}{4} \binom{r-3}{s-3}  n^{s-2},
\end{aligned}
$$
which yields the desired result by \eqref{eq:Delta,G}.
\end{proof}

\begin{Lemma}\label{prop:solution1}
Let $s\geq3$ and $r \geq s+2$. If $\hat\delta(G) \geq (1-c)n$, where
\begin{align}\label{eq:c-1}
c\leq \frac{(r-s)(r-s-1)}{(s-2)(s+1) \left( r^2(2s^2-4s+1) + r(-12s^2+26s-9) + (17s^2-39s+16) \right)},
\end{align}
then the linear system $(M_\Gamma + \Delta M_{(\Gamma,G)} )\mathbf{z} = \mathbf{1}$ admits a solution $\mathbf{z}\geq 0$.
\end{Lemma}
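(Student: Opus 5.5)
We apply Lemma \ref{prop:fromdukes} with $A=M_\Gamma$, $B=\Delta M_{(\Gamma,G)}$ and $\mathbf{d}=\mathbf{1}$, and check its three hypotheses in turn.

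\emph{Invertibility.} Since $r\geq s+2$, all six eigenvalues $\lambda_0,\dots,\lambda_5$ in Proposition \ref{prop:spectrum} are strictly positive. The only factor that could vanish is $r-s-1$ in $\lambda_2$, and it is at least $1$ here. Hence $M_\Gamma$ is invertible, and \eqref{eq:M^-1} gives $M_\Gamma^{-1}$ explicitly.

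\emph{Constant solution.} Because $M_\Gamma\in\mathcal{A}$ and $\mathbf{1}$ spans $U_0$, we have $M_\Gamma\mathbf{1}=\lambda_0\mathbf{1}$. Therefore $M_\Gamma\mathbf{x}=\mathbf{1}$ has the unique solution $\mathbf{x}=a\mathbf{1}$ with
$$a=\frac{1}{\lambda_0}=\frac{2}{s(s-1)\binom{r-2}{s-2}n^{s-2}}>0 .$$

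\emph{Norm bound.} By submultiplicativity of the infinity norm,
$$\|M_\Gamma^{-1}\Delta M_{(\Gamma,G)}\|_\infty\leq \|M_\Gamma^{-1}\|_\infty\,\|\Delta M_{(\Gamma,G)}\|_\infty .$$
We bound the two factors by Lemmas \ref{prop:norm1} and \ref{prop:norm2}; the latter applies since $r\geq s+2\geq s+1$. Write $P=r^2(2s^2-4s+1)-r(12s^2-26s+9)+(17s^2-39s+16)$. The product equals
$$\frac{2P}{s(s-1)(r-2)(r-s-1)\binom{r-3}{s-2}n^{s-2}}\cdot\frac{cs(s-1)(s+1)(r-2)}{4}\binom{r-3}{s-3}n^{s-2}
=\frac{cP(s+1)}{2(r-s-1)}\cdot\frac{\binom{r-3}{s-3}}{\binom{r-3}{s-2}}.$$
The binomial ratio simplifies to $\frac{s-2}{r-s}$, so the product is
$$\frac{c(s+1)(s-2)P}{2(r-s)(r-s-1)}.$$
This is at most $\frac12$ exactly when $c\leq \frac{(r-s)(r-s-1)}{(s-2)(s+1)P}$, which is the hypothesis \eqref{eq:c-1}. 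So hypothesis (3) of Lemma \ref{prop:fromdukes} holds.

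We should also confirm that $P>0$ for $r\geq s+2$, so that the bound is meaningful and the direction of the inequality is preserved. This follows from checking $P$ at $r=s+2$ and noting that $P$ is increasing in $r$ beyond that point.

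With all three conditions verified, Lemma \ref{prop:fromdukes} yields a unique solution $\mathbf{z}\geq0$ of $(M_\Gamma+\Delta M_{(\Gamma,G)})\mathbf{z}=\mathbf{1}$. The only real work is the algebraic simplification of the norm product and the positivity of $P$; the rest is direct invocation of the earlier results.
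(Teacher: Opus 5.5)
Your proposal is correct and matches the paper's proof: you verify the three hypotheses of Lemma~\ref{prop:fromdukes} using the positive spectrum from Proposition~\ref{prop:spectrum}, the constant solution $\mathbf{1}/\lambda_0$, and the product of the bounds in Lemmas~\ref{prop:norm1} and~\ref{prop:norm2}, with the simplification $\binom{r-3}{s-3}/\binom{r-3}{s-2}=(s-2)/(r-s)$. Your extra check that $P>0$ is something the paper leaves implicit; it does hold, since $P=2(s-1)^4$ at $r=s+2$ and $P$ increases in $r$ from there.
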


\begin{proof}
It suffices to examine the three conditions in Lemma \ref{prop:fromdukes}. By Proposition~\ref{prop:spectrum}, when $s\geq 3$ and $r \geq s+2$, the eigenvalues of $M_{\Gamma}$ are all positive, and hence $M_\Gamma$ is invertible. Proposition \ref{prop:spectrum} also implies that the linear system $M_\Gamma\mathbf{x} = \mathbf{1}$ admits a solution $\mathbf{x}=\frac{1}{\lambda_0}\mathbf{1}$, where $\frac{1}{\lambda_0}>0$. By Lemmas \ref{prop:norm1} and \ref{prop:norm2},
\begin{align*}
&\| M_\Gamma^{-1} \Delta M_{(\Gamma,G)} \|_\infty
\leq \| M_\Gamma^{-1} \|_\infty \| \Delta M_{(\Gamma,G)} \|_\infty \\
&
\ \ \ \ \ \ \ \leq \frac{c(s - 2)(s + 1) \left( r^2(2s^2-4s+1) + r(-12s^2+26s-9) + (17s^2-39s+16) \right) } {2(r - s)(r-s-1)}\leq \frac{1}{2}.
\end{align*}
Thus we can apply Lemma \ref{prop:fromdukes} to complete the proof.
\end{proof}

\begin{proof}[\textbf{\textup{Proof of Theorem~\ref{thm:main1}}}]
It is readily checked that for any $s\geq 3$ and $r\geq s+2$,
$$\frac{1}{(s-2)(s+1)(s-1)^4} \leq \text{the right side of }\eqref{eq:c-1}.$$
The proof is completed by combining Lemmas \ref{lem:frac1} and \ref{prop:solution1}.
\end{proof}

\section{The case $r=s+1$}\label{sec:s+1}

Throughout this section, let $s\geq3$ be an integer and let $\Gamma$ be a balanced complete $(s+1)$-partite graph on $(s+1)n$ vertices with vertex partition $\mathcal{P} = \{V_1,V_2, \dots, V_{s+1}\}$. Let $G$ be an $s$-admissible balanced $(s+1)$-partite subgraph of $\Gamma$ with vertex partition $\mathcal{P}$. This section establishes the minimum degree threshold for $G$ to admit a fractional $K_s$-decomposition.

For $r=s+1$, Proposition \ref{prop:spectrum} shows that $M_\Gamma$ has a zero eigenvalue $\lambda_2 = 0$, so $M_\Gamma$ is non-invertible. Thus we cannot use Lemma \ref{prop:fromdukes} to obtain a solution $\mathbf{z} \ge 0$ to the linear system $(M_\Gamma + \Delta M_{(\Gamma,G)})\mathbf{z} = \mathbf{1}$, and consequently cannot follow the same approach as in Section~\ref{sec:s+2} (which handled $r \ge s+2$) to prove that $G$ admits a fractional $K_s$-decomposition.

We use a slightly different strategy. By Lemma \ref{lem:frac}, $G$ admits a fractional $K_s$-decomposition if and only if the linear system $M_G \mathbf{y} = \mathbf{1}$ has a solution $\mathbf{y} \geq \mathbf{0}$. The following lemma transforms the problem of solving the linear system $M_G \mathbf{y} = \mathbf{1}$ into the problem of solving the linear system $(M_G + \eta E_2[E(G), E(G)])\mathbf{y} = \mathbf{1}$, where $\eta$ is a nonzero real number and $E_2$ is the orthogonal projector onto $U_2$.

\begin{Lemma}\label{cor:fromdukes4}
Let $\eta$ be a nonzero real number. If $M_G+ \eta E_2[E(G),E(G)]$ is invertible, then the unique solution $\mathbf{y}$ to the linear system $(M_G+ \eta E_2[E(G),E(G)])\mathbf{y}=\mathbf{1}$ is also a solution to the linear system $M_G\mathbf{y} = \mathbf{1}$.
\end{Lemma}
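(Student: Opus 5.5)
The plan is to show that for the solution $\mathbf{y}$ the vector $E_2[E(G),E(G)]\mathbf{y}$ vanishes, so that $(M_G+\eta E_2[E(G),E(G)])\mathbf{y}=\mathbf{1}$ reduces to $M_G\mathbf{y}=\mathbf{1}$. Invertibility is only used to give a well-defined $\mathbf{y}$; the vanishing comes from an orthogonality argument at the level of part pairs.

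\textbf{Step 1: structure of $E_2$.} For an edge $e$ let $T(e)$ be the $2$-subset of $\mathcal{P}$ containing its ends. By Lemma \ref{lem:CD}, row $2$ of $D$ is constant on $A_0,A_1,A_2$, constant on $A_3,A_4$, and has a separate value on $A_5$. Hence $E_2(e,e')$ depends only on $|T(e)\cap T(e')|$, and we can write $E_2(e,e')=P(T(e),T(e'))$ for a symmetric $\binom{s+1}{2}\times\binom{s+1}{2}$ matrix $P$.

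\textbf{Step 2: every row of $P$ lies in $\ker N^\top$.} Here $N$ is the matrix of Lemma \ref{cor:nece}. Every column of $E_2$ lies in $U_2$, and by Proposition \ref{prop:spectrum} with $r=s+1$ we have $\lambda_2=0$, so $M_\Gamma$ annihilates it. Such a column is the lift $h(e)=\tilde h(T(e))$ of a row $\tilde h$ of $P$. From $h^\top M_\Gamma h=\|W_\Gamma^\top h\|^2=0$ we get, for every copy $K$ of $K_s$ in $\Gamma$,
\[
0=(W_\Gamma^\top h)_K=\sum_{T\subseteq \mathrm{parts}(K)}\tilde h(T)=(N^\top\tilde h)(\mathrm{parts}(K)).
\]
Since every $s$-subset of parts is realised by some $K$, this gives $N^\top\tilde h=0$.

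\textbf{Step 3: aggregate the equation over part pairs.} Let $S:\mathbb{R}^{E(G)}\to\mathbb{R}^{\binom{\mathcal{P}}{2}}$ sum entries over the edges of each part pair.
\begin{itemize}
\item $S(\mathbf{1})=\mathbf{b}$, and $\mathbf{b}\in\operatorname{col}N$ by Lemma \ref{cor:nece}. This is where $s$-admissibility is used.
\item $S(M_G\mathbf{y})=N\mathbf{u}\in\operatorname{col}N$. Indeed, with $\mathbf{x}=W_G^\top\mathbf{y}$ we have $S(M_G\mathbf{y})(T)=\sum_{K}\mathbf{x}_K\,|E(K)\cap E(T)|$, and this is $N\mathbf{u}$ where $u_\ell$ is the total $\mathbf{x}$-weight on copies of $K_s$ missing $V_\ell$.
\item $(E_2[E(G),E(G)]\mathbf{y})(e)=v(T(e))$, where $v=P\,S(\mathbf{y})$. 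By Step 2 and the symmetry of $P$, $v\in\ker N^\top=(\operatorname{col}N)^\perp$.
\end{itemize}

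\textbf{Step 4: test against the lift of $v$.} Let $g\in\mathbb{R}^{E(G)}$ be given by $g(e)=v(T(e))$, and take the inner product of $g$ with the equation $M_G\mathbf{y}+\eta E_2[E(G),E(G)]\mathbf{y}=\mathbf{1}$. The terms are:
\begin{itemize}
\item $\langle g,M_G\mathbf{y}\rangle=\langle v,N\mathbf{u}\rangle=0$;
\item $\langle g,\mathbf{1}\rangle=\langle v,\mathbf{b}\rangle=0$;
\item $\langle g,\eta E_2[E(G),E(G)]\mathbf{y}\rangle=\eta\sum_T|E_G(T)|\,v(T)^2$.
\end{itemize}
Since $\eta\neq0$, this gives $v(T)=0$ for every $T$ with $E_G(T)\neq\emptyset$, which is exactly where $g$ is evaluated. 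Hence $E_2[E(G),E(G)]\mathbf{y}=\mathbf{0}$ and $M_G\mathbf{y}=\mathbf{1}$.

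The main obstacle is Step 2: identifying $U_2$ with the lifts of $(\operatorname{col}N)^\perp$. I would handle it through the quadratic-form identity $h^\top M_\Gamma h=\|W_\Gamma^\top h\|^2$ rather than by computing $P$ explicitly. As a consistency check, $\dim U_2=\frac{(s+1)(s-2)}{2}=\binom{s+1}{2}-(s+1)$, which matches $\dim\ker N^\top$ when $N$ has full column rank $s+1$.
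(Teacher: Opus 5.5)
Your proof is correct. Its skeleton is the paper's. The paper applies \cite[Lemma 2.6]{BD} with $A=M_G$, $B=\eta E_2[E(G),E(G)]$ and $\mathbf u=\mathbf 1$. The proof of that lemma is exactly your Step 4: pair the equation with $B\mathbf y$ to get $\|B\mathbf y\|^2=0$. The paper feeds in the identities $E_2[E(G),E(G)]\mathbf 1=\mathbf 0$ (Lemma \ref{prop:fromdukes1}) and $M_GE_2[E(G),E(G)]=O$ (Lemma \ref{prop:fromdukes2}). Your two vanishing inner products are the scalar versions of these.

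The difference is in how the two facts are obtained. For the second, the paper uses $E_2W_\Gamma=O$, coming from $U_2=\ker W_\Gamma^\top$, and restricts to $\mathcal K_s(G)$. That is the same input as your $N^\top P=O$. For the first, the paper does something more hands-on:
\begin{itemize}
\item it repeats each edge of $G$ exactly $s(s-1)$ times;
\item it splits the resulting multigraph into graphs $G_{\ell,\ell'}$, each with one edge per pair of parts avoiding $V_\ell$;
\item it checks $E_2\mathbf 1_{G_{\ell,\ell'}}=\mathbf 0$ by substituting the explicit coefficients of $E_2$.
\end{itemize}
That split needs $x_\ell\ge 0$, which the paper gets from \eqref{eq:nec-1} via Lemma \ref{cor:nece}.

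Your quotient to part pairs makes this step automatic. Once $E_2$ is the lift of a symmetric $P$ with $N^\top P=O$, both $S(\mathbf 1)=\mathbf b$ and $S(M_G\mathbf y)$ lie in $\operatorname{col}N$, so both are killed by the same orthogonality. The two lemmas collapse into one structural fact. You also need only some real solution of $N\mathbf x=\mathbf b$, that is, only \eqref{eq:nec-2}, rather than a nonnegative one. The paper's route, in exchange, yields the stronger matrix identities themselves.
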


\subsection{Proof of Lemma \ref{cor:fromdukes4}}\label{sec:kernel}

Let $H$ be a spanning subgraph of $\Gamma$. For $1 \leq i < j \leq s+1$, denote by $E_H(V_i, V_j)$ the set of edges in $H$ with one vertex in $V_i$ and the other vertex in $V_j$. Denote by $\mathbb{R}^{E(\Gamma)}$ the real vector space whose coordinates are indexed by $E(\Gamma)$. Let $\mathbf{1}_{H} \in \mathbb{R}^{E(\Gamma)}$ denote the indicator vector of $E(H)$, i.e., $\mathbf{1}_{H}(e)=1$ if $e\in E(H)$ and $0$ otherwise.

\begin{Lemma}\label{prop:fromdukes1}
Let $s\geq3$. Then $E_2[E(G),E(G)] \mathbf{1} = \mathbf{0}$, where $\mathbf{1}$ is the all-ones vector in $\mathbb{R}^{E(G)}$.
\end{Lemma}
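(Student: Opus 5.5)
The approach is to prove the stronger statement $E_2\mathbf{1}_G=\mathbf{0}$ in $\mathbb{R}^{E(\Gamma)}$. The lemma then follows at once: for $e\in E(G)$, the $e$-entry of $E_2[E(G),E(G)]\mathbf{1}$ equals $\sum_{e'\in E(G)}E_2(e,e')=(E_2\mathbf{1}_G)(e)$. The proof has three steps: $E_2$ only sees part-level edge counts, $E_2$ kills every complete $s$-partite block, and the $s$-admissibility of $G$ writes its part-level counts as a combination of such blocks.

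\emph{Step 1: $E_2$ only depends on part-pairs.} By Lemma \ref{lem:CD}, with $r=s+1$,
$$E_2=\frac{1}{\binom{r}{2}n^2}\Big(\tfrac{r(r-3)}{2}(A_0+A_1+A_2)+\tfrac{r(3-r)}{2(r-2)}(A_3+A_4)+\tfrac{r}{r-2}A_5\Big).$$
Hence $E_2(e,e')$ depends only on the pair of parts $T(e')$ containing $e'$, given $e$. It is determined by whether $T(e')$ equals $T(e)$, shares one part with it, or is disjoint from it. Consequently $E_2\mathbf{1}_G=E_2\mathbf{u}$, where $\mathbf{u}\in\mathbb{R}^{E(\Gamma)}$ is the part-pair average of $\mathbf{1}_G$. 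Explicitly, $\mathbf{u}(e')=|E(V_i,V_j)|/n^2$ whenever $e'$ joins $V_i$ and $V_j$.

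\emph{Step 2: express $\mathbf{u}$ via $s$-subsets of parts.} For each $\ell$, let $\mathbf{g}_\ell\in\mathbb{R}^{E(\Gamma)}$ be the indicator of all edges of $\Gamma$ not meeting $V_\ell$. This is the edge set of the complete $s$-partite graph on $\mathcal{P}\setminus\{V_\ell\}$. Since $G$ is $s$-admissible, Lemma \ref{cor:nece} gives $\mathbf{x}$ with $N\mathbf{x}=\mathbf{b}$. Reading this entrywise, $|E(V_i,V_j)|=\sum_{\ell\notin\{i,j\}}x_\ell$, which says exactly that
$$\mathbf{u}=\frac{1}{n^2}\sum_{\ell=1}^{s+1}x_\ell\,\mathbf{g}_\ell .$$
This is the only place the hypothesis \eqref{eq:nec-2} is used.

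\emph{Step 3: $E_2\mathbf{g}_\ell=\mathbf{0}$.} Each $\mathbf{g}_\ell$ is a sum of indicator vectors of copies of $K_s$ in $\Gamma$, each copy taking one vertex from every part other than $V_\ell$. So each $\mathbf{g}_\ell$ lies in the column space of $W_\Gamma$. By Proposition \ref{prop:spectrum} with $r=s+1$, $\lambda_2=0$, and by \eqref{eq:M}, $U_2\subseteq\ker M_\Gamma$. Since $M_\Gamma=W_\Gamma W_\Gamma^\top$, we have $\ker M_\Gamma=\ker W_\Gamma^\top$, which is the orthogonal complement of the column space of $W_\Gamma$. Hence every $\mathbf{g}_\ell$ is orthogonal to $U_2$, and since $E_2$ is the orthogonal projector onto $U_2$, $E_2\mathbf{g}_\ell=\mathbf{0}$. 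Combining the three steps gives $E_2\mathbf{1}_G=E_2\mathbf{u}=\mathbf{0}$.

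The main point needing care is Step 1: checking from the explicit second eigenmatrix $D$ that $E_2$ has equal coefficients on $A_0,A_1,A_2$ and on $A_3,A_4$, so that averaging within each part-pair is legitimate. As a fallback, one can instead verify $E_2\mathbf{1}_G=\mathbf{0}$ directly. Computing $(E_2\mathbf{1}_G)(e)$ as a combination of $b_{ab}$, $d_a+d_b$ and $|E(G)|$, and substituting \eqref{eq:nec-2}, should make it vanish identically.
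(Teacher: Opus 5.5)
Your proof is correct. It starts from the same solution $\mathbf{x}$ of $N\mathbf{x}=\mathbf{b}$ given by Lemma~\ref{cor:nece}, but both halves of the argument are carried out differently from the paper's.

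The paper uses $\mathbf{x}$ combinatorially. It repeats each edge of $G$ $s(s-1)$ times and splits the resulting multigraph into pieces $G_\ell$ on $\mathcal{P}\setminus\{V_\ell\}$, each with $2|E(G)|-sd_\ell$ edges between every pair of parts. It then splits each $G_\ell$ into simple spanning graphs $G_{\ell,\ell'}$ with exactly one edge between each such pair. This needs $x_\ell\geq 0$, and hence also \eqref{eq:nec-1}, as well as integrality. The paper then checks $E_2\mathbf{1}_{G_{\ell,\ell'}}=\mathbf{0}$ by evaluating a row of \eqref{eq:E_2 expression} in the two cases $e$ avoiding or meeting $V_\ell$. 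That count tacitly uses the equal coefficients on $A_0,A_1,A_2$ and on $A_3,A_4$, which is exactly what you isolate in Step~1.

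You instead average over part-pairs, so a signed combination of the $\mathbf{g}_\ell$ suffices and only \eqref{eq:nec-2} is used. You then get the vanishing structurally from $\mathbf{g}_\ell\in\operatorname{col}(W_\Gamma)$ together with $U_2\subseteq\ker M_\Gamma=\ker W_\Gamma^\top$, that is, from $\lambda_2=0$. The paper uses this same fact only later, in Lemma~\ref{prop:fromdukes2}.

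What each buys: your version explains why the identity holds, removes the arithmetic check, and shows that \eqref{eq:nec-1} is not needed. The paper's version is more hands-on and, for this lemma, needs only the row $D(2,\cdot)$ rather than the spectrum of $M_\Gamma$.

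One wording fix: $\mathbf{g}_\ell$ is not literally a sum of copies of $K_s$. It is $n^{-(s-2)}$ times the sum of the indicator vectors of all transversal copies of $K_s$ on $\mathcal{P}\setminus\{V_\ell\}$. A literal sum would require a $K_s$-decomposition of the complete $s$-partite graph, i.e.\ $s-2$ mutually orthogonal Latin squares of order $n$, which need not exist. Since you only use membership in the column space of $W_\Gamma$, nothing in the argument changes.
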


\begin{proof}
Let $N$ be an $\binom{s+1}{2} \times \binom{s+1}{s}$ matrix whose rows and columns are indexed by the $2$-subsets $T$ and $s$-subsets $K$ of $\mathcal{P}$, respectively, where the $(T,K)$-entry is $1$ if $T \subseteq K$ and $0$ otherwise. Without loss of generality, assume that for each $1 \leq \ell \leq s+1$, the $\ell$-th column of $N$ is indexed by the $s$-subset $\mathcal{P} \setminus \{ V_\ell \}$. For each $1\leq \ell \leq s+1$, let $d_\ell = \sum_{k=1, k \neq \ell}^{s+1} |E_G(V_\ell, V_k)|$ and let $x_\ell = \frac{|E(G)|}{\binom{s}{2}} - \frac{d_\ell}{s-1}=\frac{2|E(G)|-sd_\ell}{s(s-1)}$. Since $G$ is $s$-admissible, by Lemma \ref{cor:nece}, $\mathbf{x}=(x_1,x_2,\ldots,x_{s+1})^\top$ satisfies $N\mathbf{x} = \mathbf{b}$, where $\mathbf{b}$ is the column vector indexed by the $2$-subsets of $\mathcal{P}$, with entries $\mathbf{b}(\{V_i,V_j\})=|E_G(V_i,V_j)|$ for $1\leq i<j\leq s+1$, and each component $x_\ell$ of $\mathbf{x}$ satisfies $x_\ell\geq 0$. Thus by repeating each edge of $G$ exactly $s(s-1)$ times, we obtain a multigraph whose edges can be partitioned into $(s+1)$ edge-disjoint subgraphs $G_1, \ldots, G_{s+1}$, where $G_\ell$, $1\leq \ell \leq s+1$ is a multipartite multigraph with vertex partition $\mathcal{P} \setminus \{V_\ell\}$, and the number of edges between any two parts in $G_\ell$ is exactly $2|E(G)|-sd_\ell$. Furthermore, we can partition the edges of $G_\ell$ into $2|E(G)| - s d_\ell$ edge-disjoint spanning subgraphs $G_{\ell,1}, \ldots, G_{\ell,\,2|E(G)| - s d_\ell}$, where for each $1 \leq \ell' \leq 2|E(G)| - s d_\ell$, the number of edges between any two parts in $G_{\ell,\ell'}$ is exactly $1$. Hence each $G_{\ell,\ell'}$ can be regarded as a spanning subgraph of $\Gamma$ satisfying the following conditions:
\begin{itemize}
\item[(i)] $|E_{G_{\ell,\ell'}}(V_{\ell}, V_j)| = 0$ for all $j\in\{1,2,\ldots,s+1\}\setminus\{\ell\}$;
\item[(ii)] $|E_{G_{\ell,\ell'}}(V_{j_1}, V_{j_2})| = 1$ for all $1 \leq j_1 < j_2 \leq s+1$ with $j_1, j_2 \neq \ell$.
\end{itemize}
Thus, the indicator vector $\mathbf{1}_{G}\in \mathbb{R}^{E(\Gamma)}$ of $E(G)$ can be decomposed as:
$$\mathbf{1}_{G} = \frac{1}{s(s-1)} \left( \sum_{\ell=1}^{s+1} \sum_{\ell'=1}^{2|E(G)| - s d_\ell} \mathbf{1}_{G_{\ell,\ell'}} \right),$$
where $\mathbf{1}_{G_{\ell,\ell'}}\in \mathbb{R}^{E(\Gamma)}$ is the indicator vector of $E(G_{\ell,\ell'})$.

We shall demonstrate that $E_2\mathbf{1}_{G_{\ell,\ell'}}=\mathbf{0}$ for every $1\leq \ell\leq s+1$ and $1 \leq \ell' \leq 2|E(G)| - s d_\ell$, which leads to $E_2 \mathbf{1}_G = \mathbf{0}$. Restricting this equality to $E(G)$ completes the proof. Therefore, it suffices to show that for any $e\in E(\Gamma)$, the inner product of the $e$-th row of $E_2$ with $\mathbf{1}_{G_{\ell,\ell'}}$ equals $0$.

By Lemma \ref{lem:CD},
\begin{align}\label{eq:E_2 expression}
E_2 = \frac{1}{\binom{s+1}{2}n^2} \left( \frac{(s+1)(s-2)}{2}(A_0+ A_1+ A_2) + \frac{(s+1)(2-s)}{2(s-1)} (A_3+ A_4) + \frac{s+1}{s-1}A_5 \right).
\end{align}
If $e\in E_{\Gamma}(V_{j_1},V_{j_2})$ for some $1\leq j_1 < j_2 \leq s+1$ and $j_1,j_2\neq \ell$, then the number of edges in $G_{\ell,\ell'}$ that are $0$-th, $1$-st, or $2$-nd associates of $e$ is $1$, the number of edges in $G_{\ell,\ell'}$ that are $3$-rd or $4$-th associates of $e$ is $2(s-2)$, and the number of edges in $G_{\ell,\ell'}$ that are $5$-th associates of $e$ is $\binom{s-2}{2}$. Hence by the right hand side of \eqref{eq:E_2 expression}, the inner product of the $e$-th row of $E_2$ with $\mathbf{1}_{G_{\ell,\ell'}}$ equals
$$\frac{1}{\binom{s+1}{2}n^2} \left( \frac{(s+1)(s-2)}{2} + \frac{(s+1)(2-s)}{2(s-1)} 2(s-2) + \frac{s+1}{s-1}\binom{s-2}{2} \right)=0.$$
If $e\in E_{\Gamma}(V_{\ell},V_{j})$ for some $1\leq j\leq s+1$ and $j\neq \ell$, then the number of edges in $G_{\ell,\ell'}$ that are $0$-th, $1$-st, or $2$-nd associates of $e$ is $0$, the number of edges in $G_{\ell,\ell'}$ that are $3$-rd or $4$-th associates of $e$ is $s-1$, and the number of edges in $G_{\ell,\ell'}$ that are $5$-th associates of $e$ is $\binom{s-1}{2}$. Hence by \eqref{eq:E_2 expression}, the inner product of $e$-th row of $E_2$ with $\mathbf{1}_{G_{\ell,\ell'}}$ equals
$$\frac{1}{\binom{s+1}{2}n^2} \left(  \frac{(s+1)(2-s)}{2(s-1)} (s-1) + \frac{s+1}{s-1}\binom{s-1}{2} \right)=0.$$
Therefore, $E_2\mathbf{1}_{G_{\ell,\ell'}}=\mathbf{0}$ for every $1\leq \ell\leq s+1$ and $1 \leq \ell' \leq 2|E(G)| - s d_\ell$.
\end{proof}

\begin{Lemma}\label{prop:fromdukes2}
Let $s\geq3$. Then $M_G E_2[E(G),E(G)]=O$.
\end{Lemma}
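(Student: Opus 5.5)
The plan is to factor $M_G=W_GW_G^\top$ and prove the stronger statement $W_G^\top E_2[E(G),E(G)]=O$; multiplying on the left by $W_G$ then gives $M_G E_2[E(G),E(G)]=O$. Row $K$ of $W_G^\top$ is the indicator of $E(K)$ for a copy $K$ of $K_s$ in $G$, viewed in $\mathbb{R}^{E(G)}$. Since $E(K)\subseteq E(G)$, the $K$-th row of $W_G^\top E_2[E(G),E(G)]$ is the restriction to the coordinates $E(G)$ of the vector $\mathbf{1}_K^\top E_2\in\mathbb{R}^{E(\Gamma)}$. By the symmetry of $E_2$, it therefore suffices to prove that $E_2\mathbf{1}_K=\mathbf{0}$ for every copy $K$ of $K_s$ in $\Gamma$. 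This is a purely structural fact about $\Gamma$ and does not depend on $G$.

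I would give two arguments for $E_2\mathbf{1}_K=\mathbf{0}$; either one suffices.

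\emph{First argument (spectral).} Apply Proposition~\ref{prop:spectrum} with $r=s+1$. The factor $(r-s-1)$ in $\lambda_2$ vanishes, so $\lambda_2=0$ and hence $M_\Gamma E_2=\lambda_2E_2=O$. For any $\mathbf{v}\in\mathbb{R}^{E(\Gamma)}$ this gives
\begin{equation*}
\|W_\Gamma^\top E_2\mathbf{v}\|^2=\mathbf{v}^\top E_2 W_\Gamma W_\Gamma^\top E_2\mathbf{v}=\mathbf{v}^\top E_2 M_\Gamma E_2\mathbf{v}=0,
\end{equation*}
so $W_\Gamma^\top E_2=O$. The rows of $W_\Gamma^\top$ are exactly the vectors $\mathbf{1}_K^\top$ for copies $K$ of $K_s$ in $\Gamma$. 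Hence $\mathbf{1}_K^\top E_2=\mathbf{0}^\top$, and by symmetry of $E_2$ this gives $E_2\mathbf{1}_K=\mathbf{0}$.

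\emph{Second argument (direct).} Each copy $K$ of $K_s$ in the $(s+1)$-partite graph $\Gamma$ misses exactly one part $V_\ell$ and contains exactly one edge between every two of the remaining parts. So $K$ is a spanning subgraph of $\Gamma$ satisfying conditions (i) and (ii) in the proof of Lemma~\ref{prop:fromdukes1}. The two inner-product computations there, based on \eqref{eq:E_2 expression}, used only (i) and (ii), so they give $E_2\mathbf{1}_K=\mathbf{0}$ directly.

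I do not expect a real obstacle. The only point needing care is the bookkeeping in the restriction: the $(K,e)$-entry of $W_G^\top E_2[E(G),E(G)]$ equals $\sum_{e'\in E(G)}\mathbf{1}_K(e')E_2(e',e)$, and because $E(K)\subseteq E(G)$ this equals $(\mathbf{1}_K^\top E_2)(e)=0$. No terms are lost by dropping the columns and rows indexed by $\overline{E(G)}$. Once this is checked, $M_GE_2[E(G),E(G)]=W_G\bigl(W_G^\top E_2[E(G),E(G)]\bigr)=O$.
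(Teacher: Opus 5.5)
Your proof is correct, and your first argument is essentially the paper's. The paper also uses $\lambda_2=0$ to get $W_\Gamma^\top E_2=O$ (phrased there as $U_2=\ker M_\Gamma=\ker W_\Gamma^\top$, though your version shows only $\lambda_2=0$ is needed), and then restricts to $E(G)$ and $\mathcal{K}_s(G)$ using the block matrices $L_1,L_2$, which is exactly your entrywise bookkeeping with $E(K)\subseteq E(G)$. Your second argument is also valid and gives a purely combinatorial alternative: each copy of $K_s$ in $\Gamma$ satisfies conditions (i) and (ii) from the proof of Lemma~\ref{prop:fromdukes1}, so the same inner-product computations there give $E_2\mathbf{1}_K=\mathbf{0}$ without any appeal to the spectrum of $M_\Gamma$.
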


\begin{proof}
Recall from \eqref{eq:M_H} that $M_\Gamma = W_\Gamma W_\Gamma^\top$. For $r = s+1$, Proposition \ref{prop:spectrum} gives $\lambda_2 = 0$ and $\lambda_i\neq 0$ for all $i\neq 2$. Hence $U_2 = \ker M_\Gamma = \ker W_\Gamma^\top$. By \eqref{eq:U_i}, $U_2$ is spanned by the columns of $E_2$. Therefore \begin{align}\label{eq:E_2W_Gamma}
E_2W_\Gamma=(W_\Gamma^\top E_2^\top)^\top =(W_\Gamma^\top E_2)^\top=O.
\end{align}
Without loss of generality, assume that the elements of $E(\Gamma)$ and $\mathcal{K}_s(\Gamma)$ are ordered so that those in $E(G)$ and $\mathcal{K}_s(G)$, respectively, appear first. Then
$$
W_\Gamma=
\begin{pmatrix}
W_G & * \\
O & *
\end{pmatrix}.
$$
Let
$$
L_1 =
\begin{pmatrix}
I_{E(G) \times E(G)} \\
O_{(E(\Gamma)\setminus E(G)) \times E(G)}
\end{pmatrix}
\quad \text{and} \quad
L_2 =
\begin{pmatrix}
I_{\mathcal{K}_s(G) \times \mathcal{K}_s(G)} \\
O_{ \left( \mathcal{K}_s(\Gamma)\setminus \mathcal{K}_s(G) \right) \times \mathcal{K}_s(G)}
\end{pmatrix},
$$
where $L_1$ has rows indexed by $E(\Gamma)$ and columns indexed by $E(G)$, and $L_2$ has rows indexed by $\mathcal{K}_s(\Gamma)$ and columns indexed by $\mathcal{K}_s(G)$. Then
$$E_2[E(G),E(G)] M_G= L_1^\top E_2L_1 M_G= L_1^\top E_2L_1 W_GW_G^\top = L_1^\top E_2W_\Gamma L_2 W_G^\top \overset{\eqref{eq:E_2W_Gamma}}{=} O.$$
Since $E_2[E(G),E(G)]$ and $M_G$ are symmetric,
$M_G E_2[E(G),E(G)]= \left( E_2[E(G),E(G)]^\top M_G^\top \right)^\top$ $= \left( E_2[E(G),E(G)] M_G \right)^\top =O.$
\end{proof}

\begin{Lemma}\emph{\cite[Lemma 2.6]{BD}}\label{lem:fromdukes3}
Let $A$ and $B$ be symmetric matrices with $AB=O$ and $A+B$ invertible. If $B\mathbf{u}=\mathbf{0}$, then $A(A + B)^{-1}\mathbf{u}=\mathbf{u}$.
\end{Lemma}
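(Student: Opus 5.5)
We need to prove: $A,B$ symmetric, $AB=O$, $A+B$ invertible, and $B\mathbf{u}=\mathbf{0}$ imply $A(A+B)^{-1}\mathbf{u}=\mathbf{u}$.

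The plan is to set $\mathbf{v}=(A+B)^{-1}\mathbf{u}$, so that $A\mathbf{v}+B\mathbf{v}=\mathbf{u}$, and to show $B\mathbf{v}=\mathbf{0}$. Then $A\mathbf{v}=\mathbf{u}$, which is exactly the claim.

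First I would record that $BA=O$ as well: transposing $AB=O$ gives $B^\top A^\top=O$, and symmetry of $A$ and $B$ turns this into $BA=O$. Consequently
$$B(A+B)=BA+B^2=B^2 \quad\text{and}\quad (A+B)B=AB+B^2=B^2 .$$
So $B$ commutes with $A+B$, and therefore with $(A+B)^{-1}$: from $(A+B)B=B(A+B)$, multiply by $(A+B)^{-1}$ on both sides.

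Next, applying $B$ to the defining equation gives
$$B\mathbf{v}=B(A+B)^{-1}\mathbf{u}=(A+B)^{-1}B\mathbf{u}=\mathbf{0},$$
using the commutation just established and the hypothesis $B\mathbf{u}=\mathbf{0}$. Substituting back into $A\mathbf{v}+B\mathbf{v}=\mathbf{u}$ yields $A\mathbf{v}=\mathbf{u}$, that is, $A(A+B)^{-1}\mathbf{u}=\mathbf{u}$.

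There is no real obstacle here. The only point needing care is deriving $BA=O$ from symmetry, since that is what makes $B$ commute with $(A+B)^{-1}$. An alternative route would be to simultaneously diagonalize the commuting symmetric matrices $A$ and $B$, which have complementary supports on a common eigenbasis, but the algebraic argument above is shorter.
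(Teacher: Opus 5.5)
Your proof is correct and complete. From $AB=O$ and symmetry you get $BA=O$, so $B(A+B)=B^2=(A+B)B$. Hence $B$ commutes with $(A+B)^{-1}$, which gives $B\mathbf{v}=(A+B)^{-1}B\mathbf{u}=\mathbf{0}$ and then $A\mathbf{v}=\mathbf{u}$.

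The paper gives no proof of its own here: the lemma is quoted from \cite[Lemma 2.6]{BD}. So there is no in-paper argument to compare with, and your derivation works as a self-contained replacement for the citation.

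One small remark: symmetry is used only to obtain $BA=O$. In the application (Lemma \ref{prop:fromdukes2}), the paper explicitly establishes both $E_2[E(G),E(G)]M_G=O$ and $M_GE_2[E(G),E(G)]=O$, which are exactly the two identities your argument needs. It therefore applies there verbatim.
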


Now we are in a position to give the proof of Lemma \ref{cor:fromdukes4}.

\begin{proof}[\bf Proof of Lemma \ref{cor:fromdukes4}]
By assumption, $M_G + \eta E_2[E(G),E(G)]$ is invertible, so the linear system $(M_G + \eta E_2[E(G),E(G)])\mathbf{y} = \mathbf{1}$ has a unique solution $\mathbf{y}^* = (M_G + \eta E_2[E(G),E(G)])^{-1}\mathbf{1}$. To complete the proof, it suffices to show that $M_G \mathbf{y}^* = M_G (M_G + \eta E_2[E(G),E(G)])^{-1}\mathbf{1} = \mathbf{1}$. This can be established by applying Lemma \ref{lem:fromdukes3} with $A = M_G$, $B = \eta E_2[E(G),E(G)]$, and $\mathbf{u} = \mathbf{1}$. The conditions required in Lemma \ref{lem:fromdukes3} are satisfied by Lemmas \ref{prop:fromdukes1} and \ref{prop:fromdukes2}.
\end{proof}

\subsection{Proof of Theorem~\ref{thm:main2}}

In order to apply Lemma \ref{cor:fromdukes4}, we need to determine an appropriate nonzero real number $\eta$ such that $M_G + \eta E_2[E(G),E(G)]$ is invertible and the linear system $(M_G + \eta E_2[E(G),E(G)])\mathbf{y} = \mathbf{1}$ admits a solution $\mathbf{y} \geq \mathbf{0}$. For convenience, set
$$M_G^{\eta} := M_G + \eta E_2[E(G),E(G)] \quad\text{and}\quad M_\Gamma^{\eta} := M_\Gamma + \eta E_2.$$
Define $\Delta M_{(\Gamma,G)}^{\eta}$ as the matrix with rows and columns indexed by $E(\Gamma)$ given by
\begin{align*}
\Delta M_{(\Gamma,G)}^{\eta} =
\left(
\begin{array}{c:c}
M_G^{\eta} - M_\Gamma^{\eta}[E(G),E(G)] & -M_\Gamma^{\eta}[E(G),\overline{E(G)}] \\
\hdashline
\\[-2ex]
\multicolumn{2}{c}{O}
\end{array}
\right).
\end{align*}
Then
\[
M_\Gamma^{\eta} + \Delta M_{(\Gamma,G)}^{\eta} =
\left(
\begin{array}{c:c}
M_G^{\eta} & O \\
\hdashline\\[-2ex]
\multicolumn{2}{c}{ M_\Gamma^{\eta}[\overline{E(G)},E(\Gamma)] }
\end{array}
\right).
\]
Consequently, a solution $\mathbf{z} \geq 0$ to the linear system $( M_\Gamma^{\eta} + \Delta M_{(\Gamma,G)}^{\eta} )\mathbf{z} = \mathbf{1}$ induces a solution $\mathbf{y} \geq 0$ to the linear system $M_G^{\eta} \mathbf{y} = \mathbf{1}$. Moreover, if $M_\Gamma^{\eta} + \Delta M_{(\Gamma,G)}^{\eta}$ is invertible, then so is $M_G^{\eta}$. Combining these observations with Lemma~\ref{cor:fromdukes4} yields the following result.

\begin{Lemma}\label{lem:frac2}
Let $s\geq3$ and $\eta$ be a nonzero real number. If $M_\Gamma^{\eta} + \Delta M_{(\Gamma,G)}^{\eta}$ is invertible and the linear system $( M_\Gamma^{\eta} + \Delta M_{(\Gamma,G)}^{\eta})\mathbf{z} = \mathbf{1}$ admits a solution $\mathbf{z} \geq 0$, then $G$ admits a fractional $K_s$-decomposition.
\end{Lemma}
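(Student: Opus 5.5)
The plan is to read off the invertibility of $M_G^{\eta}$ and the nonnegativity of its solution from the block structure of $M_\Gamma^{\eta} + \Delta M_{(\Gamma,G)}^{\eta}$, and then to apply Lemma \ref{cor:fromdukes4} followed by Lemma \ref{lem:frac}.

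First, we have
$$
M_\Gamma^{\eta} + \Delta M_{(\Gamma,G)}^{\eta} =
\left(
\begin{array}{c:c}
M_G^{\eta} & O \\
\hdashline\\[-2ex]
\multicolumn{2}{c}{ M_\Gamma^{\eta}[\overline{E(G)},E(\Gamma)] }
\end{array}
\right),
$$
which is block lower triangular with respect to the ordering $E(G)$ first, then $\overline{E(G)}$. Its determinant therefore equals $\det(M_G^{\eta})\cdot\det\bigl(M_\Gamma^{\eta}[\overline{E(G)},\overline{E(G)}]\bigr)$. Since the whole matrix is invertible by hypothesis, $\det(M_G^{\eta})\neq 0$, so $M_G^{\eta}$ is invertible.

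Second, let $\mathbf{z}\geq 0$ satisfy $(M_\Gamma^{\eta} + \Delta M_{(\Gamma,G)}^{\eta})\mathbf{z}=\mathbf{1}$. Write $\mathbf{y}=\mathbf{z}|_{E(G)}$ for the restriction of $\mathbf{z}$ to the coordinates indexed by $E(G)$. Consider the rows of the system indexed by $E(G)$. The block in those rows and the $\overline{E(G)}$ columns is $O$, so these rows read $M_G^{\eta}\mathbf{y}=\mathbf{1}$, and $\mathbf{y}\geq 0$ because every entry of $\mathbf{z}$ is nonnegative. Since $M_G^{\eta}$ is invertible, $\mathbf{y}$ is the unique solution of this system.

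Third, $\eta\neq 0$ and $M_G+\eta E_2[E(G),E(G)]=M_G^{\eta}$ is invertible, so Lemma \ref{cor:fromdukes4} applies and gives $M_G\mathbf{y}=\mathbf{1}$. Together with $\mathbf{y}\geq 0$, Lemma \ref{lem:frac} then yields a fractional $K_s$-decomposition of $G$.

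None of these steps should be a real obstacle. The only points needing care are the determinant factorization, where the zero block lies in the top-right position, and the observation that $s$-admissibility of $G$ enters only through Lemma \ref{cor:fromdukes4}, via Lemmas \ref{prop:fromdukes1} and \ref{prop:fromdukes2}.
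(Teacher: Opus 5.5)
Your proposal is correct and follows the paper's argument essentially step for step. The block lower-triangular form of $M_\Gamma^{\eta} + \Delta M_{(\Gamma,G)}^{\eta}$ gives invertibility of $M_G^{\eta}$ and a nonnegative solution of $M_G^{\eta}\mathbf{y}=\mathbf{1}$; Lemma \ref{cor:fromdukes4} and Lemma \ref{lem:frac} then finish, and your determinant factorization supplies the justification for invertibility that the paper only asserts.
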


To ensure that $M_\Gamma^{\eta} + \Delta M_{(\Gamma,G)}^{\eta}$ satisfies the conditions of Lemma~\ref{lem:frac2}, we will again use Lemma \ref{prop:fromdukes} as in Section \ref{sec:s+2}. Prior to that, we need to compute the infinity norms of the matrices $( M_\Gamma^{\eta} )^{-1}$ and $\Delta M_{(\Gamma,G)}^{\eta}$.

\begin{Lemma}\label{prop:norm4}
Let $s\geq3$ and $\eta^* = n^{s - 2}s/(s + 2)$. Then
$$\| ( M_\Gamma^{\eta^*} )^{-1} \|_\infty = \frac{2n^{-s}}{s(s-1)^2} \left( (3s^3 - 11s^2 + 12s - 3)n^2 - 2(s-1)(s-2)^2 n + (s-1)(s-2)^2 \right).$$
\end{Lemma}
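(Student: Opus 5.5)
The plan is to mirror the proof of Lemma \ref{prop:norm1}, now for $r=s+1$. By \eqref{eq:M_Gamma E} with $r=s+1$ we have $\lambda_2=0$, so
$$M_\Gamma^{\eta^*}=M_\Gamma+\eta^*E_2=\lambda_0E_0+\lambda_1E_1+\eta^*E_2+\lambda_3E_3+\lambda_4E_4+\lambda_5E_5 .$$
Here $\binom{r-2}{s-2}=s-1$, and the eigenvalues are
$$\lambda_0=\tfrac{s(s-1)^2}{2}n^{s-2},\quad \lambda_1=\lambda_4=n^{s-2},\quad \lambda_3=(s-1)^2n^{s-2},\quad \lambda_5=(s-1)n^{s-2}.$$
All of these, together with $\eta^*=\tfrac{s}{s+2}n^{s-2}$, are positive. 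Hence $M_\Gamma^{\eta^*}$ is invertible, and by \eqref{eq:M^-1}
$$(M_\Gamma^{\eta^*})^{-1}=\sum_{i\ne2}\lambda_i^{-1}E_i+(\eta^*)^{-1}E_2 .$$

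Next, substitute $E_i=\sum_j D(i,j)A_j$ from Lemma \ref{lem:CD}, with $r=s+1$. This writes $(M_\Gamma^{\eta^*})^{-1}=\sum_{j=0}^5\beta_jA_j$, where
$$\beta_j=\sum_{i=0}^5 \mu_i^{-1}D(i,j)$$
and $\mu_i$ denotes the $i$-th coefficient above. The $A_j$ are $0$-$1$ matrices with disjoint supports and constant row sums $p_{jj}^0$ (Appendix \ref{app:A}). Therefore every row has the same absolute row sum, and
$$\|(M_\Gamma^{\eta^*})^{-1}\|_\infty=\sum_{j=0}^5|\beta_j|\,p_{jj}^0 .$$
For $r=s+1$ the row sums are
$$p_{00}^0=1,\quad p_{11}^0=2(n-1),\quad p_{22}^0=(n-1)^2,\quad p_{33}^0=2(s-1)n,\quad p_{44}^0=2(s-1)n(n-1),\quad p_{55}^0=\tbinom{s-1}{2}n^2 .$$

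The main work is computing the six $\beta_j$ explicitly as rational functions of $n$ and $s$, and determining the sign of each one so that the absolute values can be removed. The specific choice $\eta^*=n^{s-2}s/(s+2)$ was presumably made so that some cancellation occurs and the final expression is a clean quadratic in $n$ over $s(s-1)^2n^s$.

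For the signs, I would first compute each $\beta_j$ and factor out $n^{-s}$. The coefficients $\beta_0,\beta_1,\beta_2$ (and likewise $\beta_3,\beta_4$) share their $E_0,E_1,E_2$ contributions, because the first three (respectively next two) columns of the upper part of $D$ coincide. So they differ only through the $E_3,E_4,E_5$ terms, which carry factors involving $n-1$, $n-2$, $-1$, and so on. I expect $\beta_0>0$ and a definite sign pattern for the rest, valid for all $n\ge1$ and $s\ge3$; possibly the pattern is $\beta_j>0$ for $j=0,3$ and $\beta_j<0$ or $\beta_j$ small for others. I would then verify each sign by checking that the relevant polynomial in $n$ has positive leading coefficient and no root in $n\ge1$.

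Finally, I would sum $|\beta_j|p_{jj}^0$ and simplify, checking that the result matches the stated quadratic
$$(3s^3-11s^2+12s-3)n^2-2(s-1)(s-2)^2n+(s-1)(s-2)^2 .$$
The sign determination, in particular for $\beta_2$ and $\beta_5$ where the $\eta^*$ term competes with the others, is the step I expect to be the obstacle. If a sign is not uniform in $n$, a case split (say $n=1$ versus $n\ge2$) would be needed.
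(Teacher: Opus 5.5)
Your plan is the paper's proof: write $M_\Gamma^{\eta^*}=\sum_i\mu_iE_i$, invert coefficientwise, expand with the second eigenmatrix $D$ into $\sum_j\beta_jA_j$, and sum $|\beta_j|p_{jj}^0$. However, one of your eigenvalues is wrong, and since the lemma is an exact identity this breaks the final check. For $r=s+1$,
\[
\lambda_1=\frac{(r-s)(s-1)}{r-2}\binom{r-2}{s-2}n^{s-2}=\frac{s-1}{s-1}\,(s-1)\,n^{s-2}=(s-1)n^{s-2},
\]
not $n^{s-2}$. Compare \eqref{eq:Mstar}, where the coefficient of $E_1$ is $(s-1)n^{s-2}$; only $\lambda_4$ equals $n^{s-2}$. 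With your value, every $\beta_j$ picks up the spurious term $\frac{s-2}{(s-1)n^{s-2}}D(1,j)$. Already for $s=3$ your computation would return $\frac{13}{6n}$ for the norm instead of the claimed $\frac{15n^2-4n+2}{6n^3}$, so you would not reach the stated quadratic.

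With the correct $\mu_1=(s-1)n^{s-2}$ the computation goes through, and the sign analysis you expected to be the obstacle is harmless. One finds
\[
\beta_1=\frac{(s-2)^2\bigl(s(n-1)+1\bigr)}{s(s-1)^2n^{s}},\quad
\beta_2=-\frac{(s-2)^2}{s(s-1)n^{s}},\quad
\beta_3=-\frac{s-2}{(s-1)^2n^{s-1}},\quad
\beta_4=0,\quad
\beta_5=\frac{2}{s(s-1)n^{s}}.
\]
Also $\beta_0>0$, simply because the first column of $D$ is nonnegative and all $\mu_i>0$. So the signs are uniform in $n\geq 1$ and $s\geq 3$, and no case split is needed. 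Note that $\beta_3$ is negative, not positive as you guessed. The cancellation produced by the choice of $\eta^*$ is exactly $\beta_4=0$. Summing $|\beta_j|p_{jj}^0$ with the row sums you listed then gives precisely $\frac{2n^{-s}}{s(s-1)^2}\bigl((3s^3-11s^2+12s-3)n^2-2(s-1)(s-2)^2n+(s-1)(s-2)^2\bigr)$.
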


\begin{proof}
By \eqref{eq:M_Gamma E} with $r=s+1$, we have
\begin{align}\label{eq:Mstar}
M_\Gamma^{\eta^*} &= M_\Gamma + \eta^* E_2 \notag
\\ & = n^{s-2} \left( \frac{s(s-1)^2}{2} E_0 + (s-1) E_1 + (s-1)^2 E_3 + E_4 + (s-1) E_5 \right) + \eta^* E_2.
\end{align}
Denote the coefficients of these $E_i$ by $\mu_i$, which are all positive for $s\geq 3$. Then by \eqref{eq:M^-1} and Lemma~\ref{lem:CD},
$$ (M_\Gamma^{\eta^*})^{-1} =  \sum_{i=0}^5 \frac{1}{\mu_i} E_i
= \sum_{i=0}^5 \frac{1}{\mu_i} \left( \sum_{j=0}^5  D(i,j)  A_j \right)
= \sum_{j=0}^5 \left( \sum_{i=0}^5 \frac{1}{\mu_i} D(i,j) \right)  A_j.$$
By \eqref{eq:p_ii^0}, for each $j\in\{0,1,\ldots,5\}$, all row sums of $A_j$ are equal to $p_{jj}^0$. Thus
$$
\begin{aligned}
 \| (M_\Gamma^{\eta^*})^{-1} \|_\infty & = \sum_{j=0}^5 \left| \sum_{i=0}^5 \dfrac{1}{\mu_i} D(i,j) \right| p_{jj}^0 \\
 & = \frac{2n^{-s}}{s(s-1)^2} \left( (3s^3 - 11s^2 + 12s - 3)n^2 - 2(s-1)(s-2)^2 n + (s-1)(s-2)^2 \right),
\end{aligned}
$$
where the values $p_{jj}^0$ come from Appendix \ref{app:A}.
\end{proof}

\begin{Lemma}\label{prop:norm6}
Let $s\geq3$. If $\hat\delta(G) \geq (1-c)n$ for some $0\leq c\leq 1$, then $\| E_2[E(G), \overline{E(G)}]  \|_\infty \leq 4(s-2)c/s$.
\end{Lemma}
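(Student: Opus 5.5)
The plan is a direct row-sum estimate using the explicit form of $E_2$ from \eqref{eq:E_2 expression} with $r=s+1$. Fix a row indexed by an edge $e=uv\in E(G)$ with $u\in V_i$, $v\in V_j$. The row sum of $|E_2[E(G),\overline{E(G)}]|$ at $e$ is
\[
\sum_{e'\in \overline{E(G)}}|E_2(e,e')|.
\]
The entry $E_2(e,e')$ depends only on the relation between $e$ and $e'$, so we group the non-edges $e'$ of $G$ by their associate class with respect to $e$.

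First we record the absolute values of the entries. Multiply through by $1/\binom{s+1}{2}n^2=2/(s(s+1)n^2)$ in \eqref{eq:E_2 expression}. This gives $|E_2(e,e')|=\frac{s-2}{sn^2}$ when $e'$ is a $0$-th, $1$-st or $2$-nd associate of $e$. It gives $\frac{s-2}{s(s-1)n^2}$ for $3$-rd or $4$-th associates, and $\frac{2}{s(s-1)n^2}$ for $5$-th associates.

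Next we count the non-edges of $G$ in each class, using only $\hat\delta(G)\ge(1-c)n$.
\begin{itemize}
\item Since $e\in E(G)$, no $0$-th associate lies in $\overline{E(G)}$.
\item The $1$-st and $2$-nd associates of $e$ are edges of $\Gamma$ between $V_i$ and $V_j$. Each vertex of $V_i$ misses at most $cn$ vertices of $V_j$, so at most $cn\cdot n=cn^2$ of these are non-edges.
\item The $3$-rd and $4$-th associates are edges between pairs of parts sharing exactly one part with $\{V_i,V_j\}$. There are $2(s-1)$ such pairs, each contributing at most $cn^2$ non-edges.
\item The $5$-th associates are edges between pairs of parts disjoint from $\{V_i,V_j\}$. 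There are $\binom{s-1}{2}$ such pairs, again each contributing at most $cn^2$ non-edges.
\end{itemize}

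Combining the two steps, the row sum is at most
\[
\frac{s-2}{sn^2}\,cn^2+\frac{s-2}{s(s-1)n^2}\,2(s-1)cn^2+\frac{2}{s(s-1)n^2}\binom{s-1}{2}cn^2
=\frac{(s-2)c}{s}(1+2+1)=\frac{4(s-2)c}{s}.
\]
This bound is uniform in $e$, so taking the maximum over rows gives the claim.

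The only delicate point is keeping the relation classes straight for $r=s+1$, i.e.\ getting the counts $2(s-1)$ and $\binom{s-1}{2}$ of part-pairs right. It is also important to bound the whole same-pair class (relations $1$ and $2$ together) by $cn^2$ rather than estimating the two relations separately, which would be wasteful. Everything else is arithmetic.
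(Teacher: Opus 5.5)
Your proposal is correct and follows the paper's proof essentially step for step. The paper also expands $E_2$ explicitly (from Lemma~\ref{lem:CD} with $r=s+1$). It bounds the non-edges of $G$ among the $0$-th/$1$-st/$2$-nd, $3$-rd/$4$-th and $5$-th associates of $e$ by $cn^2$, $2(s-1)cn^2$ and $\binom{s-1}{2}cn^2$, and sums the absolute coefficients to obtain $4(s-2)c/s$.
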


\begin{proof}
By Lemma~\ref{lem:CD},
\begin{align}\label{eq:5.7}
\binom{s+1}{2} n^2 E_2 =  \frac{(s+1)(s-2)}{2} (A_0 + A_1 +A_2) - \frac{(s+1)(s-2)}{2(s-1)} (A_3 +A_4) + \frac{s+1}{s-1} A_5.
\end{align}
For any given edge $e\in E(G)$ and $i\in\{0,1,\ldots,5\}$, the row sum of $A_i[E(G),\overline{E(G)}]$ indexed by $e$ equals the number of edges in $\overline{E(G)}$ that are $i$-th associates of $e$. Since $\hat{\delta}(G)\geq (1-c)n$, the number of edges in $\overline{E(G)}$ that are $0$-th, $1$-st, or $2$-nd associates of $e$ is at most $cn^2$, the number of edges in $\overline{E(G)}$ that are $3$-rd or $4$-th associates of $e$ is at most $2(s-1)cn^2$, and the number of edges in $\overline{E(G)}$ that are $5$-th associates of $e$ is at most $\binom{s-1}{2}cn^2$. Therefore, by \eqref{eq:5.7}, we have
\[
\begin{aligned}
& \| E_2[E(G),\overline{E(G)}]  \|_\infty \\
\leq & \ \frac{1}{\binom{s+1}{2} n^2 } \left( \frac{(s+1)(s-2)}{2} cn^2 +  \frac{(s+1)(s-2)}{2(s-1)} 2(s-1)cn^2  + \frac{s+1}{s-1} \binom{s-1}{2}cn^2 \right)
= \frac{4(s-2)c}{s}.
\end{aligned}
\]
\end{proof}

\begin{Lemma}\label{prop:norm7}
Let $s\geq3$ and $\eta>0$. If $\hat\delta(G) \geq (1-c)n$ for some $0\leq c\leq 1$, then
$$\| \Delta M_{(\Gamma,G)}^{\eta} \|_\infty \leq \frac{cs(s-1)^2(s-2)(s+1)n^{s-2}}{4} + \dfrac{4(s-2)c\eta}{s}.$$
\end{Lemma}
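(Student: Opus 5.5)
The plan is to split $\Delta M_{(\Gamma,G)}^{\eta}$ into the unperturbed matrix $\Delta M_{(\Gamma,G)}$ plus a correction coming from $\eta E_2$. Then I apply the triangle inequality for $\|\cdot\|_\infty$ and bound the two pieces with Lemmas \ref{prop:norm2} and \ref{prop:norm6}.

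\textbf{Step 1: the block decomposition.} Since $M_\Gamma^{\eta}=M_\Gamma+\eta E_2$ and $M_G^{\eta}=M_G+\eta E_2[E(G),E(G)]$, the $\eta$-terms cancel in the upper-left block:
$$M_G^{\eta}-M_\Gamma^{\eta}[E(G),E(G)] = M_G - M_\Gamma[E(G),E(G)].$$
The upper-right block is
$$-M_\Gamma^{\eta}[E(G),\overline{E(G)}] = -M_\Gamma[E(G),\overline{E(G)}]-\eta E_2[E(G),\overline{E(G)}],$$
and the bottom block row is zero in every matrix involved. Hence
$$\Delta M_{(\Gamma,G)}^{\eta} = \Delta M_{(\Gamma,G)} + \eta\, Q,\qquad Q:=\left(\begin{array}{c:c} O & -E_2[E(G),\overline{E(G)}] \\ \hdashline \\[-2ex] \multicolumn{2}{c}{O}\end{array}\right).$$

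\textbf{Step 2: bounding the first term.} I apply Lemma \ref{prop:norm2}, which allows $r\ge s+1$, with $r=s+1$. Then $(r-2)\binom{r-3}{s-3}=(s-1)\binom{s-2}{s-3}=(s-1)(s-2)$, so
$$\|\Delta M_{(\Gamma,G)}\|_\infty\le \frac{cs(s-1)^2(s-2)(s+1)}{4}\,n^{s-2}.$$

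\textbf{Step 3: bounding the correction.} The nonzero rows of $Q$ are indexed by $E(G)$, and on those rows $Q$ is $-E_2[E(G),\overline{E(G)}]$ padded with zeros. So $\|Q\|_\infty=\|E_2[E(G),\overline{E(G)}]\|_\infty$, and Lemma \ref{prop:norm6} gives $\|Q\|_\infty\le 4(s-2)c/s$.

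Since $\eta>0$, we have $\|\eta Q\|_\infty=\eta\|Q\|_\infty$. The triangle inequality then gives the stated bound.

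The only point needing care is Step 1: one must check that the $\eta E_2$ contributions cancel exactly in the upper-left block and survive only in the upper-right block. Everything else is direct substitution into earlier lemmas.
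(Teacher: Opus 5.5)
Your proposal is correct and follows the paper's proof essentially verbatim. The paper also observes that the $\eta E_2$ terms cancel in the upper-left block, writes $-\Delta M^{\eta}_{(\Gamma,G)}$ as $-\Delta M_{(\Gamma,G)}$ plus the block matrix carrying $\eta E_2[E(G),\overline{E(G)}]$, and concludes by the triangle inequality together with Lemma~\ref{prop:norm2} at $r=s+1$ and Lemma~\ref{prop:norm6}.
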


\begin{proof}
Note that
\begin{align*}
-\Delta M_{(\Gamma,G)}^{\eta} =
\left(
\begin{array}{c:c}
M_\Gamma^{\eta}[E(G),E(G)] - M_G^{\eta} & M_\Gamma^{\eta}[E(G),\overline{E(G)}] \\
\hdashline
\\[-2ex]
\multicolumn{2}{c}{O}
\end{array}
\right).
\end{align*}
Since
$$
\begin{aligned}
M_\Gamma^{\eta}[E(G),E(G)]-M_G^{\eta} &
= (M_\Gamma+\eta E_2)[E(G),E(G)]-(M_G+ \eta E_2[E(G),E(G)]) \\
&= M_\Gamma[E(G),E(G)]-M_G,
\end{aligned}
$$
we have
\begin{align*}
-\Delta M_{(\Gamma,G)}^{\eta} & =
\left(
\begin{array}{c:c}
M_\Gamma[E(G),E(G)]-M_G & M_\Gamma[E(G),\overline{E(G)}] \\
\hdashline
\\[-2ex]
\multicolumn{2}{c}{O}
\end{array}
\right)
+
\left(
\begin{array}{c:c}
O & \eta E_2[E(G), \overline{E(G)}] \\
\hdashline
\\[-2ex]
\multicolumn{2}{c}{O}
\end{array}
\right)\\
& = -\Delta M_{(\Gamma,G)}+\left(
\begin{array}{c:c}
O & \eta E_2[E(G), \overline{E(G)}] \\
\hdashline
\\[-2ex]
\multicolumn{2}{c}{O}
\end{array}
\right),
\end{align*}
where $\Delta M_{(\Gamma,G)}$ is defined in \eqref{eq:Delta_M}. Then
\begin{align*}
\| \Delta M_{(\Gamma,G)}^{\eta}  \|_\infty=\| -\Delta M_{(\Gamma,G)}^{\eta}  \|_\infty
& \leq \| \Delta M_{(\Gamma,G)}  \|_\infty + \eta\| E_2[E(G), \overline{E(G)}]  \|_\infty \\
& \leq
\frac{cs(s-1)^2(s-2)(s+1)n^{s-2}}{4} + \frac{4(s-2)c\eta}{s},
\end{align*}
where the last inequality comes from Lemmas \ref{prop:norm2} and \ref{prop:norm6}.
\end{proof}

\begin{Lemma}\label{prop:solution2}
Let $s\geq3$ and $\eta^* = n^{s - 2}s/(s + 2)$. If $\hat\delta(G) \geq (1-c)n$, where
\begin{align}\label{eq:c-22}
c\leq  \frac{s(s-1)^2(s+2)}{(s-2)(s^5 + s^4 - 3s^3 - s^2 + 2s + 16) ( 3s^3 - 11s^2 + 12s - 3 )},
\end{align}
then $ M_\Gamma^{\eta^*} + \Delta M_{(\Gamma,G)}^{\eta^*} $ is invertible and $( M_\Gamma^{\eta^*} + \Delta M_{(\Gamma,G)}^{\eta^*} ) \mathbf{z} = \mathbf{1}$ admits a solution $\mathbf{z}\geq 0$.
\end{Lemma}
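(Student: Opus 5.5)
We apply Lemma \ref{prop:fromdukes} with $A=M_\Gamma^{\eta^*}$, $B=\Delta M_{(\Gamma,G)}^{\eta^*}$ and $\mathbf{d}=\mathbf{1}$, exactly as in the proof of Lemma \ref{prop:solution1}; the conclusion of Lemma \ref{prop:fromdukes} is precisely the statement. We check its three hypotheses in turn.

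\textbf{Condition (1).} By \eqref{eq:Mstar}, $M_\Gamma^{\eta^*}=\sum_{i=0}^5\mu_iE_i$ with
$$\mu_0=\tfrac{s(s-1)^2}{2}n^{s-2},\quad \mu_1=(s-1)n^{s-2},\quad \mu_2=\eta^*=\tfrac{s}{s+2}n^{s-2},$$
$$\mu_3=(s-1)^2n^{s-2},\quad \mu_4=n^{s-2},\quad \mu_5=(s-1)n^{s-2}.$$
All of these are positive. Since $\sum_iE_i=I$, \eqref{eq:M^-1} shows that $M_\Gamma^{\eta^*}$ is invertible.

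\textbf{Condition (2).} Because $\mathbf{1}\in U_0$ and $E_i\mathbf{1}=\mathbf{0}$ for $i\neq 0$, we have $M_\Gamma^{\eta^*}\mathbf{1}=\mu_0\mathbf{1}$. Hence the unique solution of $M_\Gamma^{\eta^*}\mathbf{x}=\mathbf{1}$ is $\mathbf{x}=\mu_0^{-1}\mathbf{1}$ with $\mu_0^{-1}>0$.

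\textbf{Condition (3).} By submultiplicativity of $\|\cdot\|_\infty$,
$$\|(M_\Gamma^{\eta^*})^{-1}\Delta M^{\eta^*}_{(\Gamma,G)}\|_\infty\le\|(M_\Gamma^{\eta^*})^{-1}\|_\infty\,\|\Delta M^{\eta^*}_{(\Gamma,G)}\|_\infty .$$
We bound the two factors separately.

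The first factor is given exactly by Lemma \ref{prop:norm4}. Since $-2(s-1)(s-2)^2n+(s-1)(s-2)^2\le 0$ for $n\ge1$, it is at most $\frac{2(3s^3-11s^2+12s-3)}{s(s-1)^2}n^{-s+2}$.

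For the second factor, apply Lemma \ref{prop:norm7} with $\eta=\eta^*=n^{s-2}s/(s+2)>0$:
$$\|\Delta M^{\eta^*}_{(\Gamma,G)}\|_\infty\le c\,n^{s-2}\Big(\frac{s(s-1)^2(s-2)(s+1)}{4}+\frac{4(s-2)}{s+2}\Big)=\frac{c(s-2)n^{s-2}}{4(s+2)}\big(s(s-1)^2(s+1)(s+2)+16\big).$$
The expression $s(s-1)^2(s+1)(s+2)+16$ should expand to $s^5+s^4-3s^3-s^2+2s+16$, which matches the factor in \eqref{eq:c-22}. This expansion is the one step to verify carefully.

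Multiplying the two bounds, the powers of $n$ cancel and we obtain
$$\|(M_\Gamma^{\eta^*})^{-1}\Delta M^{\eta^*}_{(\Gamma,G)}\|_\infty\le \frac{c(s-2)(3s^3-11s^2+12s-3)(s^5+s^4-3s^3-s^2+2s+16)}{2s(s-1)^2(s+2)}.$$
By \eqref{eq:c-22} this is at most $\tfrac12$.

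Lemma \ref{prop:fromdukes} now shows that $M_\Gamma^{\eta^*}+\Delta M^{\eta^*}_{(\Gamma,G)}$ is invertible and that the system has a unique solution $\mathbf{z}\ge0$. The main obstacle is only the bookkeeping in the polynomial identity above and confirming that the negative lower-order terms in Lemma \ref{prop:norm4} may be dropped. One must also note that $\hat\delta(G)\ge(1-c)n$ is used in Lemma \ref{prop:norm7} through Lemmas \ref{prop:norm2} and \ref{prop:norm6}, both valid for $r=s+1$.
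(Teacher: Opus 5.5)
Your proposal is correct and follows the paper's proof essentially step for step. You apply Lemma \ref{prop:fromdukes} with $A=M_\Gamma^{\eta^*}$ and $B=\Delta M^{\eta^*}_{(\Gamma,G)}$, use the positivity of the coefficients in \eqref{eq:Mstar}, and use $M_\Gamma^{\eta^*}\mathbf{1}=\mu_0\mathbf{1}$, which is equivalent to the paper's use of $E_2\mathbf{1}=\mathbf{0}$. You then multiply the bounds from Lemmas \ref{prop:norm4} and \ref{prop:norm7} after dropping the nonpositive lower-order terms. The identity you flagged does hold: $s(s-1)^2(s+1)(s+2)=s^5+s^4-3s^3-s^2+2s$.
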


\begin{proof}
It suffices to examine the three conditions in Lemma \ref{prop:fromdukes}. By \eqref{eq:Mstar}, $M_\Gamma^{\eta^*}$ can be written as a linear combination of $E_0, E_1, \dots, E_5$, and by \eqref{eq:ME_i} and \eqref{eq:M}, the coefficients of $E_i$ are precisely the eigenvalues of $M_\Gamma^{\eta^*}$, which are all positive for any $s\geq  3$. Thus $M_\Gamma^{\eta^*}$ is invertible.
By Proposition \ref{prop:spectrum}, the linear system $M_\Gamma\mathbf{x} = \mathbf{1}$ admits a solution $\mathbf{x}=\frac{1}{\lambda_0}\mathbf{1}$, where $\frac{1}{\lambda_0}>0$. Since $E_2$ is the orthogonal projector onto $U_2$, and by \eqref{eq:R_X} the subspaces $U_2$ and $U_0=\langle \mathbf{1}\rangle$ are orthogonal, it follows that $E_2\mathbf{1}=\mathbf{0}$. Thus $\mathbf{x}=\frac{1}{\lambda_0}\mathbf{1}$ is also a solution to the linear system $M_\Gamma^{\eta^*}\mathbf{x} = (M_\Gamma + \eta^* E_2)\mathbf{x} =\mathbf{1}$.
By Lemma \ref{prop:norm4} and Lemma \ref{prop:norm7} with $\eta = \eta^*$, we have
\begin{align*}
 & \ \ \ \ \| ( M_\Gamma^{\eta^*} )^{-1} \Delta M_{(\Gamma,G)}^{\eta^*} \|_\infty
\leq \| ( M_\Gamma^{\eta^*} )^{-1} \|_\infty \| \Delta M_{(\Gamma,G)}^{\eta^*} \|_\infty \\
&
\leq \frac{c(s-2)(s^5 + s^4 - 3s^3 - s^2 + 2s + 16)( 3s^3 - 11s^2 + 12s - 3 - (\frac{2}{n}-\frac{1}{n^2})(s-1)(s-2)^2 )}{2s(s-1)^2(s+2)}\\
&\leq \frac{c(s-2)(s^5 + s^4 - 3s^3 - s^2 + 2s + 16)( 3s^3 - 11s^2 + 12s - 3  )}
{2s(s-1)^2(s+2)}\leq\frac{1}{2}.
\end{align*}
Thus we can apply Lemma \ref{prop:fromdukes} to complete the proof.
\end{proof}

Now we are in a position to give the proof of Theorem \ref{thm:main2}.

\begin{proof}[\bf Proof of Theorem \ref{thm:main2}]
It is readily checked that for any $s\geq 3$,
$$\frac{1}{3s^3(s-2)^2} \leq \text{the right side of }\eqref{eq:c-22}.$$
Apply Lemmas \ref{lem:frac2} and \ref{prop:solution2} to complete the proof.
\end{proof}

\section{Concluding remarks}\label{sec:conc}

For $s$-admissible balanced $s$-partite graphs with a large partite minimum degree and sufficiently large order, Barber, K\"{u}hn, Lo, Osthus, and Taylor \cite[Corollary 1.6]{BKLOT} proved that a fractional $K_s$-decomposition can be converted into an exact $K_s$-decomposition. In contrast, for $r>s$, no analogous result is known for $s$-admissible balanced $r$-partite graphs regarding whether a fractional $K_s$-decomposition can be converted into an exact one. This is a worthwhile direction for further research.

The existence of $s-2$ mutually orthogonal Latin squares of order $n$ is equivalent to the existence of a $K_s$-decomposition of a balanced complete $s$-partite graph on $sn$ vertices (cf. \cite[Section 1.2]{BKLOT}). A generalization of this equivalence involves the concept of group divisible designs. For a positive integer $n$ and integers $r\geq s\geq 2$, a \emph{group divisible design} is a triple $(X, \mathcal{G}, \mathcal{B})$, where $X$ is a set of $rn$ points, $\mathcal{G}$ is a partition of $X$ into $r$ \emph{groups} of size $n$, and $\mathcal{B}$ is a set of $s$-subsets of $X$ (called \emph{blocks}) such that $|B \cap G| \leq 1$ for any $B \in \mathcal{B}$ and $G \in \mathcal{G}$, and every pair of points from distinct groups is contained in exactly one block of $\mathcal{B}$. Such a design is often written as an $s$-GDD of type $r^n$, whose existence is equivalent to the existence of a $K_s$-decomposition of a balanced complete $r$-partite graph on $rn$ vertices (cf. \cite[Remark 24.7]{bs}). This paper can be viewed as a fractional version of completing a partial group divisible design (i.e., with some blocks missing) to a group divisible design, generalizing the problem of completing a partial Latin square to a Latin square.

\appendix

\section{Explicit values of the intersection numbers}\label{app:A}

This appendix provides the intersection numbers $p_{ij}^k$, $i,j,k\in\{0,1,\ldots,5\}$, for the association scheme $\left( E(\Gamma), \{R_0,R_1,\ldots,R_5\}\right)$ defined in Section~\ref{subsec:asr}. Their computation is straightforward and follows directly from the definition, so we omit the detailed calculations herein.

\begin{gather*}
\begin{array}{|c|cccccc|}\hline
p_{ij}^{0} & 0 & 1 & 2 & 3 & 4 & 5 \\
\hline
0 & 1 & 0 & 0 & 0 & 0 & 0 \\
1 & 0 & 2(n-1) & 0 & 0 & 0 & 0 \\
2 & 0 & 0 & (n-1)^2 & 0 & 0 & 0 \\
3 & 0 & 0 & 0 & 2(r-2)n & 0 & 0 \\
4 & 0 & 0 & 0 & 0 & 2(r-2)(n-1)n & 0 \\
5 & 0 & 0 & 0 & 0 & 0 & \binom{r-2}{2}n^2 \\\hline
\end{array}
\\[2ex]
\begin{array}{|c|cccccc|}\hline
p_{ij}^{1} & 0 & 1 & 2 & 3 & 4 & 5 \\
\hline
0 & 0 & 1 & 0 & 0 & 0 & 0 \\
1 & 1 & n-2 & n-1 & 0 & 0 & 0 \\
2 & 0 & n-1 & (n-1)(n-2) & 0 & 0 & 0 \\
3 & 0 & 0 & 0 & (r-2)n & (r-2)n & 0 \\
4 & 0 & 0 & 0 & (r-2)n & (r-2)(2n-3)n & 0 \\
5 & 0 & 0 & 0 & 0 & 0 & \binom{r-2}{2}n^2 \\\hline
\end{array}
\\[2ex]
\begin{array}{|c|cccccc|}\hline
p_{ij}^{2} & 0 & 1 & 2 & 3 & 4 & 5 \\
\hline
0 & 0 & 0 & 1 & 0 & 0 & 0 \\
1 & 0 & 2 & 2(n-2) & 0 & 0 & 0 \\
2 & 1 & 2(n-2) & (n-2)^2 & 0 & 0 & 0 \\
3 & 0 & 0 & 0 & 0 & 2(r-2)n & 0 \\
4 & 0 & 0 & 0 & 2(r-2)n & 2(r-2)(n-2)n & 0 \\
5 & 0 & 0 & 0 & 0 & 0 & \binom{r-2}{2}n^2 \\\hline
\end{array}
\\[2ex]
\begin{array}{|c|cccccc|}\hline
p_{ij}^{3} & 0 & 1 & 2 & 3 & 4 & 5 \\
\hline
0 & 0 & 0 & 0 & 1 & 0 & 0 \\
1 & 0 & 0 & 0 & n-1 & n-1 & 0 \\
2 & 0 & 0 & 0 & 0 & (n-1)^2 & 0 \\
3 & 1 & n-1 & 0 & (r-3)n+1 & n-1 & (r-3)n \\
4 & 0 & n-1 & (n-1)^2 & n-1 & ((r-2)n-1)(n-1) & (r-3)(n-1)n \\
5 & 0 & 0 & 0 & (r-3)n & (r-3)(n-1)n & \binom{r-3}{2}n^2 \\\hline
\end{array}
\\[2ex]
\begin{array}{|c|cccccc|}\hline
p_{ij}^{4} & 0 & 1 & 2 & 3 & 4 & 5 \\
\hline
0 & 0 & 0 & 0 & 0 & 1 & 0 \\
1 & 0 & 0 & 0 & 1 & 2n-3 & 0 \\
2 & 0 & 0 & 0 & n-1 & (n-1)(n-2) & 0 \\
3 & 0 & 1 & n-1 & 1 & (r-2)n-1 & (r-3)n \\
4 & 1 & 2n-3 & (n-1)(n-2) & (r-2)n-1 & (r-3)(n-2)n+(n-1)^2 & (r-3)(n-1)n \\
5 & 0 & 0 & 0 & (r-3)n & (r-3)(n-1)n & \binom{r-3}{2}n^2 \\\hline
\end{array}
\\[2ex]
\begin{array}{|c|cccccc|}\hline
p_{ij}^{5} & 0 & 1 & 2 & 3 & 4 & 5 \\
\hline
0 & 0 & 0 & 0 & 0 & 0 & 1 \\
1 & 0 & 0 & 0 & 0 & 0 & 2(n-1) \\
2 & 0 & 0 & 0 & 0 & 0 & (n-1)^2 \\
3 & 0 & 0 & 0 & 4 & 4(n-1) & 2(r-4)n \\
4 & 0 & 0 & 0 & 4(n-1) & 4(n-1)^2 & 2(r-4)(n-1)n \\
5 & 1 & 2(n-1) & (n-1)^2 & 2(r-4)n & 2(r-4)(n-1)n & \binom{r-4}{2}n^2 \\\hline
\end{array}
\end{gather*}


\begin{thebibliography}{99}

\bibitem{Bailey}
R.A.~Bailey, Association Schemes: Designed Experiments, Algebra and Combinatorics, Cambridge Univ. Press, Cambridge, 2004.

\bibitem{bbit}
E.~Bannai, E.~Bannai, T.~Ito, and R.~Tanaka, Algebraic Combinatorics, De Gruyter, Berlin, Boston, 2021.

\bibitem{bi}
E.~Bannai and T.~Ito, Algebraic Combinatorics I: Association Schemes, Benjamin/Cum-mings, Menlo Park, California, 1984.

\bibitem{BKLMO}
B.~Barber, D.~K\"{u}hn, A.~Lo, R.~Montgomery, and D.~Osthus, Fractional clique decompositions of dense graphs and hypergraphs, J. Comb. Theory, Ser. B, 127 (2017), 148--186.

\bibitem{BKLO}
B.~Barber, D.~K\"{u}hn, A.~Lo, and D.~Osthus, Edge-decompositions of graphs with high minimum degree, Adv. Math., 288 (2016), 337--385.

\bibitem{BKLOT}
B.~Barber, D.~K\"{u}hn, A.~Lo, D.~Osthus and A.~Taylor, Clique decompositions of multipartite graphs and completion of Latin squares, J. Comb. Theory, Ser. A, 151 (2017), 146--201.

\bibitem{BD}
F.~Bowditch and P.~Dukes, Fractional triangle decompositions of dense $3$-partite graphs, J. Comb., 10 (2019), 255--282.

\bibitem{bs}
D.~Bryant and S.~El-Zanati, Graph decompositions, in: CRC Handbook of Combinatorial Designs (C.J.~Colbourn and J.H.~Dinitz, eds.), CRC Press, Boca Raton, 2007, 477--486.

\bibitem{Beyond}
M.~Delcourt, C.~Henderson, T.~Lesgourgues, and L.~Postle, Beyond Nash-Williams: counterexamples to clique decomposition thresholds for all cliques larger than triangles, arXiv:2508.20819v2, 2025.

\bibitem{DP21}
M.~Delcourt and L.~Postle, Progress towards Nash-Williams' conjecture on triangle decompositions, J. Comb. Theory, Ser. B, 146 (2021), 382--416.

\bibitem{Dross}
F.~Dross, Fractional triangle decompositions in graphs with large minimum degree, SIAM J. Discrete Math., 30 (2016), 36--42.

\bibitem{Dukes12}
P.~Dukes, Rational decomposition of dense hypergraphs and some related eigenvalue estimates,
Linear Algebra Appl., 436 (2012), 3736--3746; corrigendum, Linear Algebra Appl., 467 (2015), 267--269.

\bibitem{DH}
P.~Dukes and D.~Horsley, On the minimumm degree required for a triangle decomposition, SIAM J. Discrete Math., 34 (2020), 597--610.

\bibitem{Garaschuk}
K.~Garaschuk, Linear methods for rational triangle decompositions, Doctoral Dissertation, University of Victoria, 2014.


\bibitem{GKLMO}
S.~Glock, D.~K\"{u}hn, A.~Lo, R.~Montgomery, and D.~Osthus, On the decomposition threshold of a given graph, J. Comb. Theory, Ser. B, 139 (2019), 47--127.

\bibitem{GKLO}
S.~Glock, D.~K\"{u}hn, A.~Lo, and D.~Osthus, The existence of designs via iterative absorption: hypergraph $F$-designs for arbitrary $F$, Mem. Am. Math. Soc., 284 (1406), 2023.

\bibitem{HR}
P.E.~Haxell and V.~R\"{o}dl, Integer and fractional packings in dense graphs, Combinatorica, 21 (2001), 13--38.


\bibitem{Keevash14}
P.~Keevash, The existence of designs, arXiv:1401.3665, 2014.


\bibitem{Kirkman}
T.P.~Kirkman, On a problem in combinatorics, Cambridge Dublin Math. J., 2 (1847), 191--204.

\bibitem{Montgomery17}
R.~Montgomery, Fractional clique decompositions of dense partite graphs, Comb. Probab. Comput., 26 (2017), 911--943.

\bibitem{Montgomery19}
R.~Montgomery, Fractional clique decompositions of dense graphs, Random Struct. Algor., 54 (2019), 779--796.


\bibitem{Wilson75}
R.M.~Wilson, Decomposition of complete graphs into subgraphs isomorphic to a given graph, Congr. Numer., XV (1975), 647--659.

\bibitem{Yuster}
R.~Yuster, Asymptotically optimal $K_k$-packings of dense graphs via fractional $K_k$-decompositions, J. Comb. Theory, Ser. B, 95 (2005), 1--11.

\end{thebibliography}
\end{document}